\DeclareSymbolFont{cyrletters}{OT2}{wncyr}{m}{n}
\DeclareMathSymbol{\Sha}{\mathalpha}{cyrletters}{"58}
\title[Shuffle product of desingularized multiple zeta functions]
{Shuffle product of desingularized multiple zeta functions at integer points}
\author{Nao Komiyama}
\author{Takeshi Shinohara}
\address{Graduate School of Mathematics, Nagoya University, 
Furo-cho, Chikusa-ku, Nagoya 464-8602 Japan }
\email{}
\date{\today}
\newtheorem{thm}{Theorem}[section]
\newtheorem{lem}[thm]{Lemma}
\newtheorem{prop}[thm]{Proposition}  
\theoremstyle{remark}
\keywords{multiple zeta function, multiple polylogarithms, desingularization, renormalization}
\numberwithin{equation}{section}
\theoremstyle{definition}
\newtheorem{defn}[thm]{Definition}
\newtheorem{rem}[thm]{Remark}
\newtheorem{exa}[thm]{Examples}
\newcommand{\Li}{{\rm Li}}
\newcommand{\desLi}{\Li^{\rm des}}
\newcommand{\des}{{\rm des}}
\newcommand{\deszeta}{\zeta^{\rm des}}
\newcommand{\hatdeszeta}{\widehat{\zeta}^{\des}}
\newcommand{\veck}{{\bf k}}
\newcommand{\vecl}{{\bf l}}
\newcommand{\vecm}{{\bf m}}
\newcommand{\vecs}{{\bf s}}
\newcommand{\almr}{a_{\vecl,\vecm}^r}
\newcommand{\N}{\mathbb N}
\newcommand{\Z}{\mathbb Z}
\newcommand{\Q}{\mathbb Q}
\newcommand{\R}{\mathbb R}
\newcommand{\C}{\mathbb C}
\newcommand{\otimessym}{\otimes_{\rm sym}}
\newcommand{\dep}{{\rm dep}}
\newcommand{\Id}{{\rm Id}}
\newcommand{\re}{{\rm Re}}
\newcommand{\mcalc}{\mathcal{C}}
\newcommand{\mcalg}{\mathcal{G}}
\newcommand{\mcaln}{\mathcal{N}}
\newcommand{\mcals}{\mathcal{S}}
\begin{document}
\bibliographystyle{amsalpha+}
\maketitle

\begin{abstract}      
  In this paper, we investigate the ``shuffle-type'' formula for special values of 
  desingularized multiple zeta functions at integer points.
  It is proved by giving an iterated integral/differential expression for the desingularized multiple 
  zeta functions at integer points.
\end{abstract}

\tableofcontents
\setcounter{section}{-1}
\section{Introduction}
We begin with the {\it desingularized multiple zeta function} (desingularized MZF for short)
which was introduced by Furusho, Komori, Matsumoto, and Tsumura (\cite{FKMT1}):
\begin{equation}\label{eqn: definition of desingularized MZF}
  \deszeta_r(s_1,\dots,s_r) 
   = \lim_{ \substack{ c \rightarrow 1 \\ c\ne 1 } }
      \frac{1}{(1-c)^r}
       \prod_{k=1}^{r}\frac{1}{(e^{2\pi i s_k}-1)\Gamma(s_k)}
        \int_{\mcalc^r}
         \tilde{\mathfrak{H}}_r(t_1,\dots,t_r;c)
          \prod_{k=1}^{r}t_k^{s_{k}-1}dt_k
\end{equation}
for complex variables $s_1,\dots,s_r$, 
where $\mcalc$ is the {\it Hankel contour} (see Definition \ref{def:definition of des mzf}), and 
for $c\in\R$, we put
\begin{equation*}
 \begin{split}
  \tilde{\mathfrak{H}}_r(t_1,\dots,t_r;c)
   &:= \prod_{j=1}^{r}
            \left(
               \frac{1}{\exp(\sum_{k=j}^{r}t_k)-1}
                - \frac{c}{\exp(c\sum_{k=j}^{r}t_k)-1}
            \right)\ \in\ \C[[t_1,\dots,t_r]]. 
 \end{split}
\end{equation*}
The desingularized MZF was introduced 
to resolve the infinitely many singularities of the {\it multiple zeta function} (MZF for short), 
under the motivation of finding a suitable meaning of the special values of MZF at non-positive integer points.
Refer to 
\S\ref{subsec: The multiple zeta functions} for more information on MZFs and desingularized ones.
Here, we will describe some properties of the desingularized MZFs that are shown in \cite{FKMT1}.
\begin{enumerate}
  \item[(i)] $\deszeta_r(s_1,\dots,s_r)$ is entire on whole space $\C^r$.
  \item[(ii)] $\deszeta_r(s_1,\dots,s_r)$ is expressed as a finite ``linear'' combination of MZFs.
  \item[(iii)] Special values of $\deszeta_r(s_1,\dots,s_r)$ at all-non-positive integer points 
                      are calculated explicitly by using Seki-Bernoulli numbers.
\end{enumerate}

Furthermore, the first-named author showed the following:

\bigskip
\noindent
\textbf{Proposition \ref{prop:shuffle type formula of deszeta}}{(cf. \cite[Theorem 2.7]{Komi3})\textbf{.}}
\textit{
  For $s_1,\dots,s_{p}\in\Z$, $l_1,\dots,l_q\in\N_0$, we have
  \begin{equation*}\label{eqn:recurrence relation of deszeta}
   \begin{split}
    &\deszeta_{p}(s_1,\dots,s_{p})\deszeta_q(-l_1,\dots,-l_q) \\
     &=\sum_{
        \substack{i_b+j_b=l_b   \\   i_b,j_b\ge0   \\   1\le b \le q}
                 }
         \prod_{a=1}^q
          (-1)^{i_a}      
           \binom{l_a}{i_a}
           \deszeta_{p+q}(s_1,\dots,s_{p-1},s_p-i_1-\cdots-i_q,-j_1,\dots,-j_q).
    \end{split}
  \end{equation*}
}
\bigskip

In this paper, 
we examine the shuffle type formula for special values of desingularized MZFs at ``integer'' points.
As a generalization of Proposition \ref{prop:shuffle type formula of deszeta} and the main theorem of this paper, we show the following:

\bigskip
\noindent
\textbf{Theorem \ref{cor:shuffle product of deszeta}.}
\textit{
The ``shuffle-type'' formula holds for special values at any integer points of desingularized MZFs.
}
\bigskip

The following proposition is key to proving this theorem.

\medskip
\noindent
\textbf{Theorem \ref{thm:holomorphic of desLi}.}
\textit{
For $\veck\in\Z^r$, we have
\[
    \desLi(\veck)(t) = \hatdeszeta_r(\veck)(t).
\]
}
\medskip

Here, $\desLi(\veck)(t)$ is defined by a certain iterated integral (Definition \ref{def:definition of L}) and $\hatdeszeta_r(\veck)(t)$ is the special values of a certain function $\hatdeszeta_r(\vecs)(t)$ at integer points defined by the Hankel contour integral 
(Definition \ref{eqn: the definition of hyperposed zeta function}).
By the definition, we know that $\desLi(\veck)(t)$ satisfies the shuffle product formula, and we show that, in the limit as $t\rightarrow1$, the function $\hatdeszeta_r(\vecs)(t)$ coincides with the desingularized MZF $\deszeta_r(\vecs)$ (Proposition \ref{prop:holomorphic of t-deszeta}).
Therefore, we attain our main theorem.

In order to prove Theorem \ref{thm:holomorphic of desLi}, 
we show that both $\desLi(\veck)(t)$ and $\hatdeszeta_r(\veck)(t)$ share the same 
representation.
More precisely, we show the following two:

\bigskip
\noindent
\textbf{Theorem \ref{thm:M(veck)=hypLi}.}
\textit{
  For $\veck\in\Z^r$, we have
  \begin{equation}
    \desLi(\veck)(t) 
       = \sum_{q\ge0}(-1)^q\frac{(\log t)^q}{q!}
            \sum_{\substack{\vecl=(l_j)\in\N_0^r \\ \vecm=(m_j)\in\Z^r \\ |\vecm|=-q}}
              a_{\vecl,\vecm}^r(q)
                \left(  \prod_{j=1}^r(k_j)_{l_j}  \right)
                  \Li_{\veck+\vecm}(t).
  \end{equation}  
}
\medskip

\medskip
\noindent
\textbf{Theorem \ref{thm:hypzeta=M(veck)}.}
\textit{
  For $\veck\in\Z^r$, we have
  \begin{equation}
    \hatdeszeta_r(\veck)(t) 
       = \sum_{q\ge0}(-1)^q\frac{(\log t)^q}{q!}
            \sum_{\substack{\vecl=(l_j)\in\N_0^r \\ \vecm=(m_j)\in\Z^r \\ |\vecm|=-q}}
              a_{\vecl,\vecm}^r(q)
                \left(  \prod_{j=1}^r(k_j)_{l_j}  \right)
                  \Li_{\veck+\vecm}(t).
  \end{equation}  
}
\medskip

See Proposition \ref{prop:deszeta can be written as sum of mzfs} for definition of the Pochhammer symbol $(k_j)_{l_j}$, and see \eqref{eqn:coeff. of q-th differential of mathcalG} for the symbol $a_{\vecl,\vecm}^r(q)$.


The construction of this paper goes as follows:
In \S\ref{sec: The MZFs and desingularized MZFs}, 
we recall the MZF and its desingularization.
And then, We observe that the shuffle product formula that holds for MZVs holds for 
special values of the desingularized MZFs with depth $1$ at positive integer points.
In \S\ref{sec: Construstion of desLi(veck)(t)}, 
we review a certain Hopf algebra derived from the differential relation of multiple polylogarithms.
We show that special values of $\deszeta_r(s_1,\dots,s_r)$ at all-negative integer points have an
``{\it iterated differential expression}''.
After that, we introduce a certain function $\desLi(\veck)(t)$ defined by an iterated integral expression.
In \S\ref{sec: Linear combinations of MPLs}, 
we define a function $Z(\veck;t)$ to express $\desLi(\veck)(t)$ as a finite linear combination of multiple polylogarithms. 
At the end of this section, we show that $Z(\veck;t)$ and $\desLi(\veck)(t)$ are equal.
In \S\ref{sec: Main results}, 
we introduce an $(r+1)$-fold analytic function $\hatdeszeta_r(s_1,\dots,s_r)(t)$ defined by a Hankel contour integral. 
By showing $\hatdeszeta_r(\veck)(t)$ and $Z(\veck;t)$ are equal, we prove our main theorem.
In Appendix \ref{sec:Explicit formula of Mq(veck)}, we show explicit expressions of $Z(\veck;t)$ in terms of $\Li_{\veck}(t)$.\\[3mm]
\noindent
{\it Acknowledgments.}
We would like to thank Hidekazu Furusho and Kohji Matsumoto for their helpful comments.

\section{The MZFs and desingularized MZFs}
 \label{sec: The MZFs and desingularized MZFs}

In this section, we review the multiple zeta functions in \S\ref{subsec: The multiple zeta functions}.
In \S\ref{Desingularized MZFs}, 
we recall the definition of the desingularized MZFs which is introduced by 
Furusho, Komori, Matsumoto, and Tsumura in \cite{FKMT1}, 
and explain some properties.
In \S\ref{sec:Observation on the shuffle product of deszeta1(k)}, 
we observe that special values of the desingularized MZFs in one variable case at all positive integers 
satisfy the shuffle product formula.

\subsection{The MZFs}
 \label{subsec: The multiple zeta functions}
In this subsection, we recall the multiple zeta functions.

The Euler-Zagier {\it multiple zeta-function} (MZF for short) is the $r$-fold complex analytic function 
defined by
\[
  \zeta_r(s_1,s_2,\dots,s_r) 
   = \sum_{0<n_1<n_2<\cdots<n_r}
      \frac{1}
           {n_1^{s_1}n_2^{s_2}\cdots n_r^{s_r}}.
\]
It converges absolutely in the region
\[
  \mathcal{D}:= \{ (s_1,\dots,s_r)\in\C^r  \,|\, 1\le j \le r,\, \re(s_{r-j+1}+\cdots+s_r)>j \}.
\]
In the early 2000s, Zhao (\cite{Zhao}) and Akiyama, Egami, and Tanigawa (\cite{AET}) 
independently showed that the MZF can be meromorphically continued to $\C^r$.
In particular, in \cite{AET}, the set of singularities of the MZF is determined as follows;
\begin{equation*}
 \begin{split}
   s_r&=1, \\
   s_{r-1} + s_r &= 2, 1, 0, -2, -4,-6,\dots, \\
   \sum_{i=1}^{k}s_{r-i+1} &\in \Z_{\le k}, \quad (k=3,4,\dots,r).
 \end{split}
\end{equation*}
This shows that almost all non-positive integer points are located in the above singularities. 
In addition, it is known that they are points of indeterminacy.
Only the special values $\zeta(-k)$ ($k\in\N_0$) and
 $\zeta_2(-k_1,-k_2)$ ($k_1,k_2\in\N_0$ with $k_1+k_2$ odd) are well-defined. 
 It is one of the fundamental problems to give a nice meaning of ``$\zeta_r(-k_1,\dots,-k_r)$'' for $k_1,\dots,k_r\in\N_0$.
Regarding this, desingularized method (\S\ref{Desingularized MZFs}) and the renormalization method (\S\ref{subsec:a certain Hopf algebra}, \ref{subsec:Generalization of the shuffle type renormalization}) are known.

\subsection{Desingularized MZFs}
 \label{Desingularized MZFs}
 In this subsection, we recall the definition of the desingularized MZFs and their remarkable properties.

We start with the generating function 
\footnote{It is denoted by $\tilde{\mathfrak{H}}_r((t_j);(1);c)$ in \cite{FKMT1}.}
$\tilde{\mathfrak{H}}_r(t_1,\dots,t_r;c)\in\C[[t_1,\dots,t_r]]$ 
(cf. \cite[Definition 1.9]{FKMT1}); for $c\in\R$, we put
\begin{equation*}
 \begin{split}
  \tilde{\mathfrak{H}}_r(t_1,\dots,t_r;c)
   &:= \prod_{j=1}^{r}
            \left(
               \frac{1}{\exp(\sum_{k=j}^{r}t_k)-1}
                - \frac{c}{\exp(c\sum_{k=j}^{r}t_k)-1}
            \right) \\
   &= \prod_{j=1}^{r}
           \left(
               \sum_{m\ge1}
                (1-c^m)B_m
                 \frac{ (\sum_{k=j}^{r}t_k)^{m-1} }
                          {m!}
            \right).
 \end{split}
\end{equation*}
Here, $B_m$ ($m\in\N_0$) is the $m$-th Seki-Bernoulli number which is defined by
\[
    \frac{x}{e^x-1} := \sum_{m\ge0}
                                           \frac{B_m}{m!} x^m.
\]
We note that $B_0=1$, $B_1=-\frac{1}{2}$, $B_2=\frac{1}{6}$.

\begin{defn}[{\cite[Definition 3.1]{FKMT1}}]\label{def:definition of des mzf}
  For $s_1,\dots,s_r\in\C\setminus\Z$, 
  we define
  \begin{equation}\label{eqn:definition of des mzf}
      \deszeta_r(s_1,\dots,s_r) 
       := \lim_{ \substack{c \rightarrow 1 \\ c\ne 1} }
               \frac{1}{(1-c)^r}
                C(\vecs)
                  \int_{\mcalc^r}
                   \tilde{\mathfrak{H}}_r(t_1,\dots,t_r;c)
                    \prod_{k=1}^{r}t_k^{s_{k}-1}dt_k,
  \end{equation}
  where $\mcalc$ is the Hankel contour, that is, the path consisting of the positive axis (top side), 
  a circle around the origin of radius $\epsilon$ (sufficiently small), 
  and the positive real axis (bottom side).
  We put 
  \begin{equation}\label{eqn: coefficient of contour integration}
    C(\vecs):=C(s_1,\dots,s_k):=\prod_{k=1}^{r}\frac{1}{(e^{2\pi i s_k}-1)\Gamma(s_k)}.
  \end{equation}
  We call $\deszeta_r(s_1,\dots,s_r)$ the {\it desingularized MZF}.
\end{defn}

The next proposition guarantees the convergence of the right-hand side of 
\eqref{eqn:definition of des mzf}.

\begin{prop}[{\cite[Theorem 3.4]{FKMT1}}]\label{prop:holomolcity and analytic continuation of des mzf}
  The function $\deszeta_r(s_1,\dots,s_r)$ can be analytically continued to $\C^r$
  as an entire function in $(s_1,\dots,s_r)\in\C^r$ by the following integral expression;
  \begin{equation*}
     \deszeta_r(s_1,\dots,s_r)
      = C(\vecs) 
      \cdot
         \int_{\mcalc^r}
          \prod_{j=1}^{r}
           \left(\frac{1}{\exp(\sum_{k=j}^{r}t_k)-1}
                       - \frac{\sum_{k=j}^{r}t_k\exp(\sum_{k=j}^{r}t_k)}{(\exp(\sum_{k=j}^{r}t_k)-1)^2}
              \right) 
               \prod_{k=1}^{r}t_k^{s_{k}-1}dt_k.
  \end{equation*} 
\end{prop}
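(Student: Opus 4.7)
The plan is to prove this in three steps: (i) compute the pointwise limit of $\tilde{\mathfrak{H}}_r(t_1,\dots,t_r;c)/(1-c)^r$ as $c\to 1$; (ii) justify the interchange of this limit with the contour integral over $\mcalc^r$, which establishes the claimed integral representation; and (iii) show that this integral expression, paired with $C(\vecs)$, defines an entire function of $(s_1,\dots,s_r)\in\C^r$.

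For step (i), I would observe that $\tilde{\mathfrak{H}}_r(t_1,\dots,t_r;c)$ factors as $\prod_{j=1}^r\varphi_j(c;T_j)$, where $T_j:=\sum_{k=j}^r t_k$ and $\varphi_j(c;T_j):=\frac{1}{\exp(T_j)-1}-\frac{c}{\exp(cT_j)-1}$. Each factor vanishes at $c=1$, so the product vanishes to order $r$, matching the $(1-c)^r$ in the denominator. Computing factor by factor, set $g(c):=\frac{c}{\exp(cT_j)-1}$; then $g(1)=\frac{1}{\exp(T_j)-1}$ and a direct differentiation gives
\[
g'(1)=\frac{\exp(T_j)-1-T_j\exp(T_j)}{(\exp(T_j)-1)^2}=\frac{1}{\exp(T_j)-1}-\frac{T_j\exp(T_j)}{(\exp(T_j)-1)^2}.
\]
Hence $\lim_{c\to 1}\varphi_j(c;T_j)/(1-c)=g'(1)$, which is precisely the $j$-th factor in the claimed integrand, and the product over $j$ yields the pointwise limit.

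For step (ii), the interchange is justified by a dominated-convergence argument on $\mcalc^r$. The key observations are that each $\varphi_j(c;T_j)/(1-c)$ extends holomorphically across $c=1$ in any compact neighborhood of the origin in the $t_k$-variables (since $\varphi_j$ is a holomorphic function of $(c,T_j)$ vanishing on $\{c=1\}$), and that along the two horizontal parts of $\mcalc^r$ the factors decay exponentially as $|t_k|\to\infty$ (coming from $\frac{1}{\exp(T_j)-1}$ and $\frac{T_j\exp(T_j)}{(\exp(T_j)-1)^2}$, both of which are $O(e^{-T_j})$ for large positive $T_j$); combined with the polynomial size of $\prod_k|t_k^{s_k-1}|$, these yield an integrable dominant uniform in $c$ near $1$.

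For step (iii), the crux is the regularity of the integrand at each $t_k=0$ together with the pole/zero cancellations in $C(\vecs)$. A short expansion shows
\[
\frac{1}{\exp(T)-1}-\frac{T\exp(T)}{(\exp(T)-1)^2}=\frac{(\exp(T)-1)-T\exp(T)}{(\exp(T)-1)^2}=-\tfrac{1}{2}+O(T),
\]
so the apparent simple poles at $T_j=0$ cancel and each factor is in fact analytic at $t_k=0$. Together with exponential decay at infinity, this makes the integral $I(\vecs):=\int_{\mcalc^r}\bigl(\cdots\bigr)\prod t_k^{s_k-1}dt_k$ entire in $\vecs$ by differentiation under the integral sign. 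The factor $C(\vecs)=\prod_k\frac{1}{(e^{2\pi i s_k}-1)\Gamma(s_k)}$ is entire except for simple poles at positive integer values of the $s_k$, since $1/\Gamma(s_k)$ kills the poles of $1/(e^{2\pi i s_k}-1)$ at non-positive integers. At any $s_k\in\N$, however, $t_k^{s_k-1}$ is single-valued, so the contributions from the upper and lower halves of $\mcalc$ in the $t_k$-variable cancel, leaving only the small-circle contribution, which vanishes as $\epsilon\to 0$ because the analyticity established above forces the residue at $t_k=0$ to be zero; this gives the necessary zeros of $I(\vecs)$ to cancel the remaining poles of $C(\vecs)$. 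The main obstacle is this last cancellation: one must argue uniformly and cleanly that $C(\vecs)I(\vecs)$ is entire, which is where the regularization encoded in the original $\tilde{\mathfrak{H}}_r$ pays off.
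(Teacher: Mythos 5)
The paper does not actually prove this proposition --- it is imported from \cite[Theorem 3.4]{FKMT1} --- but your proposal is correct and follows essentially the same route as that proof (and as this paper's parallel argument for Proposition \ref{prop:holomorphic of t-deszeta} via Lemmas \ref{lem: preparation1 for the proof of holomorphy of hypzeta} and \ref{lem: preparation2 for the proof of holomorphy of hypzeta}): identify the $c\to1$ limit of each factor as the derivative $g'(1)$ of $g(c)=c/(e^{cT_j}-1)$, interchange limit and Hankel integral by a dominated-convergence bound uniform in $c$, and cancel the poles of $C(\vecs)$ at $s_k\in\N$ because the contour integral vanishes there, the integrand being single-valued and analytic at $t_k=0$ (your $-\tfrac12+O(T)$ expansion). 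The only place you are thinner than the source is the tail estimate: what must decay exponentially, uniformly for $c$ near $1$, is the difference quotient $\bigl(\tfrac{1}{e^{T_j}-1}-\tfrac{c}{e^{cT_j}-1}\bigr)/(1-c)$ itself rather than its limit, which is exactly the content of \cite[Lemma 3.6]{FKMT1} (the analogue of Lemma \ref{lem: preparation2 for the proof of holomorphy of hypzeta}) and follows, for instance, by applying the mean value theorem to $g$ on the sector of Lemma \ref{lem: preparation1 for the proof of holomorphy of hypzeta}.
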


We review useful notations to show another wonderful property of $\deszeta_r(s_1,\dots,s_r)$.
For indeterminants $u_j$ and $v_j$ ($1\le j \le r$), we set
\begin{equation}\label{eqn:def of mathcalGr}
    \mathcal G_r:= \mcalg_r(u_1,\dots,u_r;v_1,\dots,v_r)
      := \prod_{j=1}^{r}
            \{ 1- (u_jv_j+\cdots+u_rv_r)(v_j^{-1}-v_{j-1}^{-1}) \}
\end{equation}
with the convention $v_0^{-1}:=0$, and we define the set of integers $\{\almr\}$ by 
  \begin{equation}\label{eqn:the seq of coeff of deszeta}
     \mcalg_r(u_1,\dots,u_r;v_1,\dots,v_r)
      = \sum_{ \substack{\vecl=(l_j)\in\N_0^r \\ \vecm(m_j)\in\Z^r \\ |\vecm|=0} }
           \almr 
            \prod_{j=1}^{r}
             u_{j}^{l_j}v_{j}^{m_j}.
  \end{equation}
Here, we put $|\vecm|:=m_1+\cdots+m_r$.

One of the remarkable properties of the desingularized MZFs is that it can be represented 
as a finite ``linear'' combination of MZFs.

\begin{prop}[{\cite[Theorem 3.8]{FKMT1}}]\label{prop:deszeta can be written as sum of mzfs}
  For $s_1,\dots,s_r\in\C$, we have the following equality 
  between meromorphic functions of the complex variables $(s_1,\dots,s_r)$;
  \[
    \deszeta_r(s_1,\dots,s_r) 
     = \sum_{ \substack{\vecl=(l_j)\in\N_0^r \\ \vecm(m_j)\in\Z^r \\ |\vecm|=0} }
          \almr 
           \left( \prod_{j=1}^{r} (s_j)_{l_j}\right)
            \zeta_r(s_1+m_1,\dots,s_r+m_r),
  \]
  where $(s)_k$ is the Pochhammer symbol, that is,
  $(s)_0:=1$ and $(s)_k:=s(s+1)\cdots(s+k-1)$ for $k\in\N$ and $s\in\C$.
\end{prop}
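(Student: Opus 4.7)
The plan is to evaluate $\deszeta_r(\vecs)$ directly from the contour-integral expression of Proposition~\ref{prop:holomolcity and analytic continuation of des mzf}, by expanding the integrand combinatorially so that each resulting term integrates to a shifted MZF multiplied by a product of Pochhammer symbols. Writing $S_j=\sum_{k=j}^{r}t_k$ and using the series identity
\begin{equation*}
\frac{1}{e^{S_j}-1}-\frac{S_je^{S_j}}{(e^{S_j}-1)^2}=\sum_{n_j\ge 1}(1-n_jS_j)\,e^{-n_jS_j},
\end{equation*}
one rewrites the integrand of Proposition~\ref{prop:holomolcity and analytic continuation of des mzf} as $\sum_{n_1,\ldots,n_r\ge 1}\prod_j(1-n_jS_j)e^{-n_jS_j}$. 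The change of variables $N_k:=n_1+\cdots+n_k$ (with $N_0:=0$) turns the sum into one over $0<N_1<\cdots<N_r$ and yields $\sum_j n_jS_j=\sum_k N_kt_k$.

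Next, I would expand successively: (i) $\prod_j(1-n_jS_j)$ over subsets $A\subseteq\{1,\ldots,r\}$, producing a sign $(-1)^{|A|}$; (ii) $\prod_{j\in A}S_j$ over tuples $(k_j)_{j\in A}$ with $k_j\ge j$; and (iii) $\prod_{j\in A}(N_j-N_{j-1})$ over subsets $B\subseteq A$, producing a sign $(-1)^{|A\setminus B|}$ and a monomial $\prod_{j\in B}N_j\prod_{j\in A\setminus B}N_{j-1}$. Integrating term by term via the elementary Hankel-contour identity
\begin{equation*}
\frac{1}{(e^{2\pi is_k}-1)\Gamma(s_k)}\int_{\mcalc}t_k^{s_k+l_k-1}e^{-N_kt_k}\,dt_k=(s_k)_{l_k}\,N_k^{-(s_k+l_k)},
\end{equation*}
and then summing over $0<N_1<\cdots<N_r$, one obtains $\deszeta_r(\vecs)$ as a sum of contributions $(-1)^{|B|}\prod_k(s_k)_{l_k}\zeta_r(\vecs+\vecm)$, where $l_k:=\#\{j\in A:k_j=k\}$ and $m_k:=l_k-\mathbf{1}_{k\in B}-\mathbf{1}_{k+1\in A\setminus B}$. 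One verifies that $\sum_km_k=|A|-|B|-|A\setminus B|=0$, matching the constraint $|\vecm|=0$. The aggregated coefficient of $\prod_k(s_k)_{l_k}\zeta_r(\vecs+\vecm)$ is then $\almr$ by direct comparison with the expansion of $\mcalg_r=\prod_j\{1-(u_jv_j+\cdots+u_rv_r)(v_j^{-1}-v_{j-1}^{-1})\}$: each factor contributes either $1$ (when $j\notin A$) or $-u_{k_j}v_{k_j}(v_j^{-1}-v_{j-1}^{-1})$ for some $k_j\ge j$ (when $j\in A$), and the two sign choices inside the parentheses match the partition of $A$ into $B$ and $A\setminus B$ (using the convention $v_0^{-1}=0$, which corresponds to $N_0=0$).

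The main obstacle lies in rigorously justifying the formal manipulations above: the double sum $\sum_{n_1,\ldots,n_r\ge 1}\prod_j(1-n_jS_j)e^{-n_jS_j}$ and its interchange with the Hankel contour integral converge absolutely only in a restricted region of $\vecs\in\C^r$. The identity is therefore first established in that region, and then extended by meromorphic continuation to all of $\C^r$, using that $\deszeta_r$ is entire by Proposition~\ref{prop:holomolcity and analytic continuation of des mzf} and that the right-hand side is meromorphic with singularities appearing only in the individual MZFs.
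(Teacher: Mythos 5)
Your proof is correct and follows essentially the same route as the source of this statement: the paper itself does not reprove Proposition \ref{prop:deszeta can be written as sum of mzfs} but cites \cite[Theorem 3.8]{FKMT1}, whose argument is exactly your computation --- expansion of the integrand of Proposition \ref{prop:holomolcity and analytic continuation of des mzf} via $\frac{1}{e^{S}-1}-\frac{Se^{S}}{(e^{S}-1)^{2}}=\sum_{n\ge1}(1-nS)e^{-nS}$, the substitution $n_j=N_j-N_{j-1}$, term-by-term Hankel/Mellin integration producing the Pochhammer factors, and the sign-for-sign identification of the aggregated coefficients with the expansion of $\mathcal{G}_r$ (the convention $v_0^{-1}=0$ matching $N_0=0$), first in a region of absolute convergence and then by meromorphic continuation. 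The same computation, with the additional $\log t$ bookkeeping, reappears in this paper as the proof of Theorem \ref{thm:hypzeta=M(veck)}, so your argument is fully in line with the paper's methods.
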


Let us see some examples of Proposition \ref{prop:deszeta can be written as sum of mzfs}. 

\begin{exa}\label{exa:example of deszeta_1 and deszeta_2}
  (\cite[\S4]{FKMT1}).
  In the case $r=1$, we have
  \begin{equation*}\label{eqn:deszeta_1 is  (1-s)zeta_1}
    \deszeta_1(s)=(1-s)\zeta(s).
  \end{equation*}
  We also see that $\deszeta_1(1)=-1$.

  In the case $r=2$, we have
  \begin{equation}\label{eqn:deszeta_2 can be written as a summation of zeta_2}
   \begin{split}
     \deszeta_2(s_1,s_2)
    = &(s_1-1)(s_2-1)\zeta_2(s_1,s_2)
         + s_2(s_2+1-s_1)\zeta_2(s_1-1,s_2+1) \\
       &- s_2(s_2+1)\zeta_2(s_1-2,s_2+2).
   \end{split}
  \end{equation}
  
  By \cite[Proposition 4.9]{FKMT2}, we have
  \begin{equation}
   \label{eqn:the special value deszeta_2(1,1)}
    \deszeta_2(1,1) = \frac{1}{2}.
  \end{equation}
\end{exa}

We consider the following generating function $Z(t_1,\dots,t_r)$ of 
the special value $\deszeta_r(-k_1,\dots,-k_r)$ ($k_1,\dots,k_r\in\N_0$) which will be employed in the later section:
\[
   Z(t_1,\dots,t_r)
    := \sum_{k_1,\dots,k_r \ge 0}
          \frac{ (-t_1)^{k_1}\cdots(-t_r)^{k_r} }
                   {k_1!\cdots k_r!}
            \deszeta_r(-k_1,\dots,-k_r).
\]
This is explicitly calculated as follows.

\begin{prop}[{\cite[Theorem 3.7]{FKMT1}}]\label{prop:explicit formula of gen. fun Z}
  We have
  \[
      Z(t_1,\dots,t_r) 
      = \prod_{i=1}^r
           \frac{(1-t_i-\cdots-t_r)e^{t_i+\cdots+t_r}-1}{(e^{t_i+\cdots+t_r}-1)^2}.
  \]
  In terms of $\deszeta_r(-k_1,\dots,-k_r)$ for $k_1,\dots,k_r\in\N_0$, 
  the above equation is reformulated to 
  \[
      \deszeta_r(-k_1,\dots,-k_r)
       = (-1)^{k_1+\cdots+k_r}
           \sum_{ 
            \substack{
                                 \nu_{1i}+\cdots+\nu_{ii}=k_{i} \\
                                 1\le i \le r
                               }
                       }
             \prod_{i=1}^r
              \frac{k_i!}{\prod_{j=i}^r\nu_{ij}!}
               B_{\nu_{ii}+\cdots+\nu_{ir}+1}.
  \]
\end{prop}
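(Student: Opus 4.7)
My plan is to evaluate the Hankel contour integral of Proposition \ref{prop:holomolcity and analytic continuation of des mzf} directly at the integer points $(s_1,\dots,s_r)=(-k_1,\dots,-k_r)$, and show that the resulting Taylor coefficients assemble into the claimed generating function. Write $T_j:=t_j+\cdots+t_r$ and set
\[
  g(t_1,\dots,t_r):=\prod_{j=1}^{r}\left(\frac{1}{e^{T_j}-1}-\frac{T_j e^{T_j}}{(e^{T_j}-1)^2}\right)=\prod_{j=1}^{r}\frac{(1-T_j)e^{T_j}-1}{(e^{T_j}-1)^2},
\]
so Proposition \ref{prop:holomolcity and analytic continuation of des mzf} reads $\deszeta_r(\vecs)=C(\vecs)\int_{\mcalc^r}g(t)\prod_{k=1}^{r}t_k^{s_k-1}dt_k$. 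A crucial observation is that each factor of $g$ equals $\frac{d}{dT_j}(T_j/(e^{T_j}-1))$, which is entire in $T_j$, so $g$ is analytic at the origin of $\C^r$.

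At $s_k=-k_k\in -\N_0$ the monomial $t_k^{-k_k-1}$ is single-valued, so the two rays of $\mcalc$ cancel and only the small circle contributes, giving $\int_{\mcalc}g\,t_k^{-k_k-1}dt_k=2\pi i\cdot [t_k^{k_k}]g$ in each variable; iterating in all $r$ variables yields $\int_{\mcalc^r}g(t)\prod_{k=1}^{r}t_k^{-k_k-1}dt_k=(2\pi i)^r[t_1^{k_1}\cdots t_r^{k_r}]g(t)$, where $[t^{\alpha}]$ denotes Taylor-coefficient extraction. The Laurent expansions $\Gamma(s)\sim (-1)^k/(k!(s+k))$ and $e^{2\pi i s}-1\sim 2\pi i(s+k)$ near $s=-k$ give $C(\vecs)|_{s_k=-k_k}=\prod_{k=1}^{r}(-1)^{k_k}k_k!/(2\pi i)$, and the two powers of $2\pi i$ cancel to produce
\[
  \deszeta_r(-k_1,\dots,-k_r)=(-1)^{k_1+\cdots+k_r}\left(\prod_{i=1}^{r}k_i!\right)[t_1^{k_1}\cdots t_r^{k_r}]g(t).
\]
Substituting this into the definition of $Z(t_1,\dots,t_r)$, signs and factorials cancel, and one reads off $Z(t_1,\dots,t_r)=g(t_1,\dots,t_r)$, which is the first identity.

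The explicit Bernoulli formula is then bookkeeping. From $T/(e^T-1)=\sum_{m\ge 0}B_m T^m/m!$, differentiating once gives $\frac{(1-T_j)e^{T_j}-1}{(e^{T_j}-1)^2}=\sum_{n\ge 0}B_{n+1}T_j^n/n!$. Expanding each $T_j^n=(t_j+\cdots+t_r)^n$ by the multinomial theorem with indices $\nu_{j,j},\dots,\nu_{j,r}$ and extracting the coefficient of $t_1^{k_1}\cdots t_r^{k_r}$ forces $\sum_{j=1}^{i}\nu_{j,i}=k_i$ for each $i$; combining with the Taylor-coefficient formula above recovers the stated closed form.

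The only delicate step is the contour collapse at integer $s_k$, which is immediate once one observes that $g$ is analytic at the origin; all other steps are routine generating-function manipulation, so I do not anticipate a serious obstacle.
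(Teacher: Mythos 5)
Your argument is correct, but note that the paper itself gives no proof of this proposition: it is imported verbatim from \cite[Theorem 3.7]{FKMT1}, so there is nothing internal to compare against. Your route --- start from the entire-function contour expression of Proposition \ref{prop:holomolcity and analytic continuation of des mzf}, observe that each factor is $\frac{d}{dT_j}\bigl(T_j/(e^{T_j}-1)\bigr)$ and hence analytic at $T_j=0$, collapse the Hankel contour at $s_k=-k_k$ to pick out $(2\pi i)^r$ times the Taylor coefficient, and cancel against $\lim_{s_k\to-k_k}C(\vecs)=\prod_k(-1)^{k_k}k_k!/(2\pi i)$ --- is essentially the standard derivation used in \cite{FKMT1}, and the final multinomial bookkeeping with $\frac{d}{dT}\frac{T}{e^T-1}=\sum_{n\ge0}B_{n+1}T^n/n!$ reproduces the stated closed form exactly (a sanity check at $r=1$ gives $\deszeta_1(-k)=(-1)^kB_{k+1}=(1+k)\zeta(-k)$, as it should). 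The one place where your write-up is looser than it should be is the contour collapse: joint analyticity of $g$ at the origin is not by itself the right justification, because when you shrink the circle in, say, $t_1$, the remaining variables may sit far out on the rays of $\mcalc$, and you need $t_1\mapsto g$ to be holomorphic on the whole disc $|t_1|\le\epsilon$ for \emph{every} such configuration; this holds because the partial sums $\sum_{k=j}^r t_k$ stay in $\mcaln(1)\cup\mcals(\theta)$ and hence away from the poles $2\pi i n$ ($n\neq0$) of the factors, which is precisely the content of Lemma \ref{lem: preparation1 for the proof of holomorphy of hypzeta} (= \cite[Lemma 3.5]{FKMT1}). With that reference inserted, the proof is complete; also state explicitly that $C(\vecs)$ has removable singularities at non-positive integers, so evaluating $\deszeta_r$ there is legitimately the product of the two limits you compute.
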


We recall that the first-named author investigated the following shuffle-type formula of special values of the desingularized MZFs.

\begin{prop}[{\cite[Proposition 4.8]{Komi2}}]\label{prop:recurrence relation of deszeta}
  For $s_1,\dots,s_{r-1}\in\C$ and $k\in\N_0$, we have
  \begin{equation*}
    \deszeta_r(s_1,\dots,s_{r-1},-k) 
      =  \sum_{i=0}^k
             \binom{k}{i}
               \deszeta_{r-1}(s_1,\dots,s_{r-1}-k+i)
                 \deszeta_1(-i).
  \end{equation*}
\end{prop}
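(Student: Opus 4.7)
The plan is to exploit the Hankel-contour representation of $\deszeta_r$ in Proposition~\ref{prop:holomolcity and analytic continuation of des mzf}. Set $g(T) := \frac{1}{e^T-1} - \frac{Te^T}{(e^T-1)^2}$, write $T_j := t_j + \cdots + t_r$, and decompose the integrand as $A(t_1,\dots,t_r)\cdot B(t_r)$ with $A := \prod_{j=1}^{r-1}g(T_j)$ and $B := g(t_r)$. At $s_r = -k$, although $C_r(s_r) = \{(e^{2\pi i s_r}-1)\Gamma(s_r)\}^{-1}$ and the $t_r$-Hankel integral are individually indeterminate, their product has the finite evaluation $C_r(-k)\cdot\int_{\mcalc}t_r^{-k-1}f(t_r)\,dt_r = (-1)^k f^{(k)}(0)$ (using $C_r(-k) = (-1)^k k!/(2\pi i)$ together with the standard residue reading of the Hankel integral for $f$ analytic near $0$). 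Consequently,
\[
  \deszeta_r(s_1,\dots,s_{r-1},-k) = \prod_{m<r}C_m(s_m)\int_{\mcalc^{r-1}}\bigl[(-1)^k\partial_{t_r}^k(AB)\bigr]_{t_r=0}\prod_{m<r}t_m^{s_m-1}dt_m.
\]

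Next I would apply the Leibniz rule to produce the sum $\sum_{p+q=k}\binom{k}{p}\{(-1)^p\partial_{t_r}^pA|_{t_r=0}\}\{(-1)^q\partial_{t_r}^qB|_{t_r=0}\}$ and dispatch the two factors independently. First, each $T_j$ with $j<r$ depends symmetrically on $t_{r-1}$ and $t_r$, so $\partial_{t_r}A = \partial_{t_{r-1}}A$ and therefore $\partial_{t_r}^pA|_{t_r=0} = \partial_{t_{r-1}}^p\bigl(A(t_1,\dots,t_{r-1},0)\bigr)$, a $t_{r-1}$-derivative of the $\deszeta_{r-1}$-integrand. Performing $p$ integrations by parts on the $t_{r-1}$-Hankel contour (boundary contributions cancel by the analyticity of $A$ at the origin together with exponential decay on the positive axis) produces the factor $(-1)^p\prod_{l=1}^p(s_{r-1}-l)$ and replaces $t_{r-1}^{s_{r-1}-1}$ by $t_{r-1}^{s_{r-1}-p-1}$; absorbing this Pochhammer-type product via the identity $C_{r-1}(s_{r-1})\prod_{l=1}^p(s_{r-1}-l) = C_{r-1}(s_{r-1}-p)$, the remaining $(r-1)$-fold integral reconstitutes $\deszeta_{r-1}(s_1,\dots,s_{r-2},s_{r-1}-p)$. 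Second, since $B(t_r) = g(t_r)$ is precisely the depth-one generating function $Z(t_r)$ of Proposition~\ref{prop:explicit formula of gen. fun Z}, we obtain $(-1)^q\partial_{t_r}^qB|_{t_r=0} = \deszeta_1(-q)$. Re-indexing $i = q = k-p$ and using $\binom{k}{p}=\binom{k}{i}$ then matches the right-hand side of the claimed formula.

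The main delicate point is the analytic treatment at $s_r = -k$: both $C_r(s_r)$ and the $t_r$-Hankel integral degenerate there, but their product remains entire, a fact already encoded in Proposition~\ref{prop:holomolcity and analytic continuation of des mzf}. I would handle this either by reading the operator identity $C_r(-k)\cdot\int_\mcalc(\cdot)\,t_r^{-k-1}dt_r = (-1)^k\partial_{t_r}^k(\cdot)|_{t_r=0}$ as a limit from generic $s_r$, or by first establishing the proposition for $s_r$ in a region of absolute convergence and then extending by analyticity in $(s_1,\dots,s_{r-1})$. The iterated integrations by parts pose no further difficulty once the analyticity of $A$ at the origin is confirmed.
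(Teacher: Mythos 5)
Your proposal cannot be compared against an in-paper argument, because the paper does not prove this proposition: it is quoted from \cite[Proposition 4.8]{Komi2} (and the $t$-deformed analogue, Proposition \ref{prop:recurrence relation of t-deszeta}, is likewise stated as provable ``in the same way'' without details). Judged on its own, your argument is correct and essentially complete. The key identity $C_r(-k)\cdot\int_{\mcalc}t_r^{-k-1}f(t_r)\,dt_r=(-1)^kf^{(k)}(0)$ holds with the paper's normalization, since $C_r(s_r)$ has a removable singularity at $s_r=-k$ with value $(-1)^kk!/2\pi i$ and the straight parts of the Hankel contour cancel, leaving the residue; note that the Hankel integral itself is not ``indeterminate'' at $s_r=-k$ --- it is entire in $s_r$ --- so only the factor $C_r$ requires a limiting interpretation, and the value of the entire function of Proposition \ref{prop:holomolcity and analytic continuation of des mzf} at $s_r=-k$ is then simply the product of the two finite values (no separate continuation argument is needed). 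The residue step does require that the integrand be holomorphic in $t_r$ near $0$ for $t_1,\dots,t_{r-1}\in\mcalc$, which follows from the analyticity of $g$ at the origin together with the fact that the partial sums $\sum_{k=j}^{r}t_k$ stay in $\mcaln(1)\cup\mcals(\theta)$ (cf.\ Lemma \ref{lem: preparation1 for the proof of holomorphy of hypzeta}); you should say this explicitly. Your reduction of the $A$-factor is right: $A$ depends on $(t_{r-1},t_r)$ only through $t_{r-1}+t_r$, so $\partial_{t_r}^pA|_{t_r=0}$ is a $t_{r-1}$-derivative of the depth-$(r-1)$ integrand, and the $p$-fold integration by parts plus $C(s)\prod_{l=1}^{p}(s-l)=C(s-p)$ reconstitutes $\deszeta_{r-1}(s_1,\dots,s_{r-1}-p)$; the only boundary points of $\mcalc$ are at $+\infty$, where exponential decay kills the boundary terms, but to make the integration by parts and the convergence of the resulting integrals rigorous you need decay estimates for the \emph{derivatives} of $g$ along the contour (a Cauchy-estimate variant of Lemma \ref{lem: preparation2 for the proof of holomorphy of hypzeta}), which you assert but do not supply. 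Finally, identifying the $B$-factor via $g=Z$ and Proposition \ref{prop:explicit formula of gen. fun Z} gives $(-1)^q\partial_{t_r}^qB|_{t_r=0}=\deszeta_1(-q)$, and the re-indexing matches the stated formula; as a consistency check, the case $r=2$, $k=0$ reproduces $\deszeta_2(s_1,0)=\deszeta_1(s_1)\deszeta_1(0)$, which agrees with Proposition \ref{prop:deszeta can be written as sum of mzfs} once the term $s_2(s_2+1-s_1)\zeta_2(s_1-1,s_2+1)$ is treated as a limit at its singular point.
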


\begin{prop}[{\cite[Theorem 2.7]{Komi3}}]\label{prop:shuffle type formula of deszeta}
  For $s_1,\dots,s_{p}\in\C$, $l_1,\dots,l_q\in\N_0$, we have
  \begin{equation*}\label{eqn:recurrence relation of deszeta}
   \begin{split}
    &\deszeta_{p}(s_1,\dots,s_{p})\deszeta_q(-l_1,\dots,-l_q) \\
     &=\sum_{
        \substack{i_b+j_b=l_b   \\   i_b,j_b\ge0   \\   1\le b \le q}
                 }
         \prod_{a=1}^q
          (-1)^{i_a}      
           \binom{l_a}{i_a}
           \deszeta_{p+q}(s_1,\dots,s_{p-1},s_p-i_1-\cdots-i_q,-j_1,\dots,-j_q).
    \end{split}
  \end{equation*}
\end{prop}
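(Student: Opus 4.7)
The plan is to argue by induction on $q$, treating $\deszeta_p(\vecs)$ as a fixed entire function of the complex variables $(s_1,\dots,s_p)$ (justified by Proposition \ref{prop:holomolcity and analytic continuation of des mzf}) and using the single-variable recurrence of Proposition \ref{prop:recurrence relation of deszeta} as the main engine. Since every identity below is between entire functions of $(s_1,\dots,s_p)$, no convergence issues arise.

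Base case $q=1$: I would prove
\[
\deszeta_p(\vecs)\,\deszeta_1(-l_1) = \sum_{i=0}^{l_1}(-1)^i\binom{l_1}{i}\,\deszeta_{p+1}(s_1,\dots,s_{p-1},s_p-i,-(l_1-i))
\]
by substituting, on the right-hand side, the expansion provided by Proposition \ref{prop:recurrence relation of deszeta} for each $\deszeta_{p+1}(\dots,s_p-i,-(l_1-i))$, namely $\sum_{k}\binom{l_1-i}{k}\deszeta_p(\dots,s_p-l_1+k)\deszeta_1(-k)$. Interchanging the order of summation and applying the elementary identity $\binom{l_1}{i}\binom{l_1-i}{k}=\binom{l_1}{k}\binom{l_1-k}{i}$, the inner alternating sum $\sum_{i=0}^{l_1-k}(-1)^i\binom{l_1-k}{i}$ collapses to $\delta_{k,l_1}$ by the binomial theorem, leaving only the term $\deszeta_p(\vecs)\deszeta_1(-l_1)$.

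Inductive step: Assuming the formula for $q-1$, I would first apply Proposition \ref{prop:recurrence relation of deszeta} to $\deszeta_q(-l_1,\dots,-l_q)$, peeling off its last coordinate, to get
\[
\deszeta_q(-\vecl) = \sum_{i_q=0}^{l_q}\binom{l_q}{i_q}\,\deszeta_{q-1}(-l_1,\dots,-l_{q-2},-(l_{q-1}+l_q-i_q))\,\deszeta_1(-i_q).
\]
After multiplying by $\deszeta_p(\vecs)$, the inductive hypothesis applies to each product $\deszeta_p(\vecs)\deszeta_{q-1}(\dots)$, turning it into a sum of depth-$(p+q-1)$ desingularized MZFs with explicit coefficients; a final application of the base case to each resulting factor $\deszeta_{p+q-1}(\dots,-j_{q-1})\,\deszeta_1(-i_q)$ lifts the depth to $p+q$.

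The main obstacle, and the technical heart of the argument, is the combinatorial bookkeeping that follows. One must collect a triple nested sum of signs and binomial coefficients, reparameterize it in terms of the target summation variables $(i_a,j_a)_{a=1}^q$ with $i_a+j_a=l_a$, and verify that the coefficient of each $\deszeta_{p+q}(s_1,\dots,s_p-i_1-\cdots-i_q,-j_1,\dots,-j_q)$ collapses to exactly $\prod_{a=1}^q(-1)^{i_a}\binom{l_a}{i_a}$. This reorganization hinges on Vandermonde-type alternating identities analogous to the one used in the base case, now applied at two adjacent coordinates simultaneously, so that all spurious contributions cancel cleanly.
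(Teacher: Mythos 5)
First, a point of comparison: the paper does not prove this proposition at all --- it is imported verbatim from \cite[Theorem 2.7]{Komi3}, and the only related argument in the paper (Theorem \ref{cor:shuffle product of deszeta}) proceeds by a completely different, structural route (the Hopf algebra $(\mathcal H,\shuffle_0)$, the functions $\desLi(\veck)(t)$ and $\hatdeszeta_r(\vecs)(t)$, and the limit $t\to1-0$), so your elementary induction on $q$ built on Proposition \ref{prop:recurrence relation of deszeta} is in any case a different approach from anything carried out in this paper. Your base case $q=1$ is correct and complete: substituting Proposition \ref{prop:recurrence relation of deszeta} into the right-hand side, using $\binom{l_1}{i}\binom{l_1-i}{k}=\binom{l_1}{k}\binom{l_1-k}{i}$ and $\sum_{i}(-1)^i\binom{l_1-k}{i}=\delta_{k,l_1}$, does collapse the double sum to $\deszeta_p(\vecs)\deszeta_1(-l_1)$, and all applications of Proposition \ref{prop:recurrence relation of deszeta} are legitimate since the peeled-off entry is always a non-positive integer.

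The genuine gap is that your inductive step stops exactly where the content of the proposition lies: after peeling off $-l_q$, applying the induction hypothesis, and applying the base case once more, you assert that the resulting nested sum of signs and binomial coefficients ``collapses cleanly'' to $\prod_{a=1}^q(-1)^{i_a}\binom{l_a}{i_a}$, but you neither state the identity this requires nor verify it, so as written this is a plan rather than a proof. The good news is that the plan does go through, and more simply than you suggest: for $b\le q-2$ the coefficients $(-1)^{a_b}\binom{l_b}{a_b}$ match the target term by term, so only the coordinates $q-1$ and $q$ (and the shift of $s_p$) become entangled. Grouping the terms according to the final argument vector, with $C:=j_q$, $B:=j_{q-1}$ and $A:=l_{q-1}+l_q-B-C$ the required collapse is the single identity
\begin{equation*}
\sum_{x\ge0}(-1)^x\binom{l_q}{x+C}\binom{x+C}{x}\binom{l_{q-1}+l_q-C-x}{A}
=\binom{l_{q-1}}{B}\binom{l_q}{C},
\end{equation*}
which follows from $\binom{l_q}{x+C}\binom{x+C}{x}=\binom{l_q}{C}\binom{l_q-C}{x}$ together with the finite-difference identity $\sum_{x}(-1)^x\binom{N}{x}\binom{M-x}{A}=\binom{M-N}{A-N}$ applied with $N=l_q-C$, $M=l_{q-1}+l_q-C$ (this also makes the out-of-range terms $B>l_{q-1}$ or $C>l_q$ vanish, as they must). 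Writing out this reindexing and the identity above is what is needed to turn your proposal into a complete proof; without it, the ``technical heart'' you point to remains unestablished.
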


We see that the ``shuffle-type'' formula holds for products of special values at any integer points
and the ones at all-non-positive integer points.


\subsection{Observation on the shuffle product of $\deszeta_1(k)$}
\label{sec:Observation on the shuffle product of deszeta1(k)}
In this subsection, we present a simple, but important observation.

At first, we remark on some special values of the double zeta function. 
\begin{lem}[{\cite[Proposition 4]{AK}}]
 \label{lem:the limit value of (s-1)zeta(n,s) s goes to 1}
 For $n\in\N_{\ge2}$  we have
 \begin{equation}
  \label{eqn:the limit value of (s-1)zeta(n,s) s goes to 1}
   \lim_{s \rightarrow 1}(s-1)\zeta_2(n,s) = \zeta(n).
 \end{equation}
\end{lem}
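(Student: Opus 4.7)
The plan is to extract the pole of $\zeta_2(n,s)$ at $s=1$ from the harmonic (stuffle) product decomposition and identify its residue with $\zeta(n)$. Splitting $\sum_{n_1\ge 1}\sum_{n_2\ge 1} n_1^{-n}n_2^{-s}$ according to whether $n_1<n_2$, $n_1=n_2$, or $n_1>n_2$, one obtains, first on the region of absolute convergence and then throughout $\C$ by analytic continuation, the identity
\begin{equation*}
\zeta(n)\zeta(s) \;=\; \zeta_2(n,s) + \zeta(n+s) + \zeta_2(s,n).
\end{equation*}
Solving for $\zeta_2(n,s)$ and multiplying through by $(s-1)$ gives
\begin{equation*}
(s-1)\zeta_2(n,s) \;=\; (s-1)\zeta(n)\zeta(s) - (s-1)\zeta(n+s) - (s-1)\zeta_2(s,n),
\end{equation*}
so the claim reduces to reading off the behaviour of three well-understood functions at $s=1$.

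The first term is immediate: since $(s-1)\zeta(s)\to 1$ as $s\to 1$, the product tends to $\zeta(n)$. The second term vanishes in the limit because $n+s\to n+1\ge 3$, a point where $\zeta$ is holomorphic, so $\zeta(n+s)$ stays bounded and the factor $(s-1)$ kills it. The crux is the third term: one must check that $\zeta_2(s,n)$ itself is holomorphic at $s=1$, so that $(s-1)$ again forces the product to $0$. Here I would invoke the explicit description of the singular locus of the double zeta function recalled in \S\ref{subsec: The multiple zeta functions} following Akiyama--Egami--Tanigawa: $\zeta_2(s_1,s_2)$ is singular only along $s_2=1$ and $s_1+s_2\in\{2,1,0,-2,-4,\dots\}$. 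For the pair $(s_1,s_2)=(s,n)$ with $n\ge 2$, the first condition would require $n=1$, and the second, at $s=1$, would require $n+1\in\{2,1,0,-2,\dots\}$, i.e.\ $n\le 1$; both fail. Hence $\zeta_2(s,n)$ is analytic at $s=1$.

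Assembling the three pieces yields $\lim_{s\to 1}(s-1)\zeta_2(n,s)=\zeta(n)$. The only non-routine input is the stuffle identity, and the remaining step is extraction of the residue from the Akiyama--Egami--Tanigawa singular locus, so I do not anticipate a serious obstacle. A minor point worth recording is that the stuffle identity, which is derived initially on the domain of absolute convergence $\{\re(s)>1\}$, persists throughout $\C$ as an equality of meromorphic functions, which legitimises taking the limit at $s=1$ term by term.
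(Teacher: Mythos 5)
Your proof is correct, but it follows a genuinely different route from the paper. The paper disposes of this lemma in one line: it simply cites the known Laurent expansion $\zeta_2(n,s)=\frac{\zeta(n)}{s-1}+O(1)$ at $s=1$ from Arakawa--Kaneko \cite[Proposition 4]{AK} and reads off the limit. You instead rederive the residue from the harmonic (stuffle) decomposition $\zeta(n)\zeta(s)=\zeta_2(n,s)+\zeta(n+s)+\zeta_2(s,n)$, isolate the pole in $(s-1)\zeta(s)$, and check that the other two terms are holomorphic at $s=1$; your verification that the identity, proved on the region of absolute convergence, persists as an identity of meromorphic functions is the right thing to say and makes the argument self-contained. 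Your appeal to the Akiyama--Egami--Tanigawa singular locus for $\zeta_2(s,n)$ is valid but stronger than necessary: since $n\ge 2$, the point $(s_1,s_2)=(1,n)$ satisfies $\re(s_2)>1$ and $\re(s_1+s_2)>2$, so it already lies in the domain $\mathcal{D}$ of absolute convergence and $\zeta_2(s,n)$ is bounded near $s=1$ for elementary reasons. In short, the paper buys brevity by outsourcing the expansion to \cite{AK}, while your argument buys independence from that reference at the cost of invoking the stuffle relation and (more machinery than needed) the description of the singular locus.
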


\begin{proof}
It is a direct consequence of the expansion
\[
  \zeta_2(n,s) = \frac{\zeta(n)}{s-1} + O(1),
\]
where $O$ is Landau's notation.
\end{proof}

It is known the following for the case $n\le0$.

  \begin{rem}[{\cite[Example 4.2]{FKMT2}}]\label{rem:special values of MZFs for r=2}
  The following two hold:
  \begin{align*}
     &\zeta_2(0,s)= \zeta(s-1)-\zeta(s), \\ 
     &\zeta_2(-1,s)= \frac{1}{2}\left\{ \zeta(s-2)-\zeta(s-1) \right\}
  \end{align*}
  for $s\in\C$ except for singularities.
  Hence, we have
  \begin{equation}\label{eqn:zeta(-1,s)=1/2 zeta(0,s-1)}
  \zeta_2(-1,s)=\frac{1}{2}\zeta_2(0,s-1)
  \end{equation}
  for $s,s-1$ except for singularities.
  \end{rem} 

By the elementary calculation, we find the following.

\begin{prop}\label{prop:shuffle relation of dep1 des MZF}
   For $n$, $m\in\N$, we have 
  \begin{equation}
   \label{eqn:shuffle rel. of zeta^des1}
    \zeta^{\des}_1(n) \zeta_1^{\des}(m) 
     = \sum_{j=1}^{n+m-1}
          \left\{\binom{j-1}{n-1}+\binom{j-1}{m-1}\right\}
           \zeta^{\des}_2(n+m-j,j), 
  \end{equation}
  where we use the usual convention $\binom{k}{i}=0$ for $k<i$.
\end{prop}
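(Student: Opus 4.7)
The plan is to reduce the identity to the classical depth-one shuffle product of multiple zeta values, using the explicit expressions for $\deszeta_1$ and $\deszeta_2$ in terms of $\zeta$ and $\zeta_2$ from Example \ref{exa:example of deszeta_1 and deszeta_2}. I treat the generic case $n, m \ge 2$ first: the left-hand side rewrites as $(n-1)(m-1)\zeta(n)\zeta(m)$, to which I apply the standard depth-one shuffle formula
\[
\zeta(n)\zeta(m) = \sum_{j=2}^{n+m-1}\left\{\binom{j-1}{n-1}+\binom{j-1}{m-1}\right\}\zeta_2(n+m-j, j),
\]
where the $j = 1$ term drops out because $\binom{0}{n-1}=\binom{0}{m-1}=0$. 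For the right-hand side, I substitute \eqref{eqn:deszeta_2 can be written as a summation of zeta_2} into each $\deszeta_2(n+m-j, j)$, collect the resulting $\zeta_2$ terms by argument pair, and match coefficients with the shuffle expansion.

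The matching reduces to two tasks. First, the coefficients of the regular terms $\zeta_2(a,b)$ with $a \ge 1$, $b \ge 2$ match via a binomial identity in $n, m, b$: after re-indexing, the coefficient of $\zeta_2(a,b)$ on the right-hand side becomes $(b-1)\bigl[(a-1)c_b + (b-a-1)c_{b-1} - (b-2)c_{b-2}\bigr]$ with $c_j := \binom{j-1}{n-1}+\binom{j-1}{m-1}$, and equality with $(n-1)(m-1)c_b$ is verified by a direct factorial calculation that splits the identity into two pieces, one for each of the binomial summands in $c_j$. Second, the non-standard terms $\zeta_2(0,n+m)$ and $\zeta_2(-1,n+m+1)$ that appear at the upper boundary of the index range cancel via \eqref{eqn:zeta(-1,s)=1/2 zeta(0,s-1)} together with the identity $c_{n+m-1} = 2c_{n+m-2}$, which follows from Pascal's rule. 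The boundary cases $n = 1$ or $m = 1$ (and $n = m = 1$) require interpreting \eqref{eqn:deszeta_2 can be written as a summation of zeta_2} as a meromorphic limit at singular points: when the second argument of some $\deszeta_2$ tends to $1$, the vanishing factor $(s_2-1)$ compensates the simple pole of $\zeta_2(s_1, s_2)$, and Lemma \ref{lem:the limit value of (s-1)zeta(n,s) s goes to 1} extracts the finite residue $\zeta(s_1)$; combining this with Remark \ref{rem:special values of MZFs for r=2} and \eqref{eqn:the special value deszeta_2(1,1)} evaluates the boundary terms in closed form and confirms the identity.

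The main obstacle is the boundary analysis, in which several summands of \eqref{eqn:deszeta_2 can be written as a summation of zeta_2} individually develop poles that must combine to a finite value matching the right-hand side; extracting the correct residues with Lemma \ref{lem:the limit value of (s-1)zeta(n,s) s goes to 1} requires careful bookkeeping. The generic-case binomial identity, though elementary, similarly benefits from a clean splitting into $n$-only and $m$-only parts before its structure becomes transparent.
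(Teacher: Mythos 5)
Your proposal is correct and follows essentially the same route as the paper's proof: expand each $\deszeta_2(n+m-j,j)$ via \eqref{eqn:deszeta_2 can be written as a summation of zeta_2}, reduce the generic coefficients to $(n-1)(m-1)\bigl\{\binom{j-1}{n-1}+\binom{j-1}{m-1}\bigr\}$ by the same binomial manipulation, dispose of the $\zeta_2(0,n+m)$ and $\zeta_2(-1,n+m+1)$ boundary terms with Lemma \ref{lem:the limit value of (s-1)zeta(n,s) s goes to 1} and \eqref{eqn:zeta(-1,s)=1/2 zeta(0,s-1)}, invoke Euler's shuffle decomposition of $\zeta(n)\zeta(m)$, and treat $n=1$ or $m=1$ and $n=m=1$ separately via \eqref{eqn:the special value deszeta_2(1,1)}. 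The only deviation is organizational: you work with the symmetrized coefficients $c_j$ and cancel the boundary via $c_{n+m-1}=2c_{n+m-2}$, whereas the paper computes the $n$-part alone and cancels the residual $(n-m)$-term by the $n\leftrightarrow m$ symmetry.
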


\begin{proof}
  If $n=m=1$, by \eqref{eqn:the special value deszeta_2(1,1)}, we have 
  \[
     \deszeta_1(1)^2 = 1 = 2\deszeta_2(1,1).
  \]
  
 We prove \eqref{eqn:shuffle rel. of zeta^des1} for $m,n\geq2$.
  To save space, we set 
  \[
    b_{j,n}:=\binom{j-1}{n-1},\  (j,\,n\in\N).
  \]
  
  Since $n$ and $m$ are symmetric, we compute the following: 
  \begin{align*}
    &\sum_{j=1}^{n+m-1}
       \binom{j-1}{n-1}
        \zeta^{\des}_2(n+m-j,j) \\
    &= \sum_{j=1}^{n+m-1}
           b_{j,n}
            \Big\{ (n+m-j-1)(j-1)\zeta_2(n+m-j,j) 
            \nonumber\\
    & \quad + j(2j+1-n-m)\zeta_2(n+m-j-1,j+1) 
           - j(j+1)\zeta_2(n+m-j-2,j+2)
        \Big\}. \nonumber\\
   \intertext{Notice that the equality holds by \eqref{eqn:deszeta_2 can be written as a summation of zeta_2}. By rearranging each term, we have}
    &= \sum_{j=3}^{n+m-1}
        \Big\{ b_{j,n}(n+m-j-1)(j-1)
               + b_{j-1,n}(j-1)(2j-1-n-m) \nonumber\\
    &\hspace{17mm}
               - b_{j-2,n}(j-2)(j-1)
        \Big\}\zeta_2(n+m-j,j) + R^1_{n,m} + R^2_{n,m}, \nonumber
  \end{align*}
  where
  {\small
  \begin{equation*}
   \begin{split}
    R^1_{n,m} &:=\ b_{1,n}(n+m-2)\zeta(n+m-1) 
                         - b_{2,n}(n+m-3)\zeta_2(n+m-2,2) \\
                        &\hspace{3mm} - b_{1,n}(3-n-m)\zeta_2(n+m-2,2), \\
    R^2_{n,m} &:= \ b_{n+m-1,n}(n+m-1)^2\zeta_2(0,n+m) 
                              - b_{n+m-2,n}(n+m-2)(n+m-1)\zeta_2(0,n+m) \\
                          &\hspace{3mm} - b_{n+m-1,n}(n+m-1)(n+m)\zeta_2(-1,n+m+1).
   \end{split}          
  \end{equation*}}
  We use \eqref{eqn:the limit value of (s-1)zeta(n,s) s goes to 1} in the definition of $R_{n,m}^1$. 
  By using $\binom{n-1}{r}=\binom{n}{r}\frac{n-r}{n}$ (i.e. $b_{j-1,n}=b_{j,n}\frac{j-n}{j-1}$),
  we have
  {\small
  \begin{align}\label{eqn:calculation of the coefficients in the RHS of shuffle rel. of zeta^des1}
     &b_{j,n}(n+m-j-1)(j-1) 
      + b_{j-1,n}(j-1)(2j-1-n-m) 
       - b_{j-2,n}(j-2)(j-1) \\
     &= \Big\{
                   (n+m-j-1)(j-1) + (j-n)(2j-1-n-m) - (j-n-1)(j-n)
           \Big\} b_{j,n} \nonumber\\
     &= (1-n)(1-m)b_{j,n}. \nonumber
  \end{align}}
  
  We next compute $R^1_{n,m}$ and $R_{n,m}^2$. 
  We can easily see that the following holds for $n,m\in\N_{\ge2}$:
  \begin{equation}
   \label{eqn:cal of the R^1_n}
    R^1_{n,m} = \delta_{n,2}(m-1)\zeta_2(n+m-2,2)=b_{2,n}(m-1)\zeta_2(n+m-2,2),
  \end{equation}
  where $\delta_{i,j}$ is the Kronecker delta.
  By using $b_{j-1,n}=b_{j,n}\frac{j-n}{j-1}$, we have
  \begin{align}
   \label{eqn:cal.1 of the R^2_n}
     R^2_{n,m} &= b_{n+m-1,n}(n+m-1)^2\zeta_2(0,n+m) \nonumber\\
                     &\hspace{3.5mm} - b_{n+m-1,n}(m-1)(n+m-1)\zeta_2(0,n+m) \nonumber\\
                     &\hspace{3.5mm} - b_{n+m-1,n}(n+m-1)(n+m)\zeta_2(-1,n+m+1). \nonumber
  \intertext{By using \eqref{eqn:zeta(-1,s)=1/2 zeta(0,s-1)}, we obtain}
     R^2_{n,m}&= b_{n+m-1,n}(n-m) (n+m-1) \zeta_2(-1,n+m+1).
  \end{align}
  By \eqref{eqn:calculation of the coefficients in the RHS of shuffle rel. of zeta^des1}, 
  \eqref{eqn:cal of the R^1_n}, and
  \eqref{eqn:cal.1 of the R^2_n}, we have
  \begin{align*}
    &\sum_{j=1}^{n+m-1}
       \binom{j-1}{n-1}
        \zeta^{\des}_2(n+m-j,j) \\
     &= (1-n)(1-m)
            \sum_{j=3}^{n+m-1}
              b_{j,n}
               \zeta_2(n+m-j,j) 
         + b_{2,n}(m-1)\zeta_2(n+m-2,2) \\
     &\quad + b_{n+m-1,n}(n-m) (n+m-1) \zeta_2(-1,n+m+1).
     \intertext{Because $b_{1,n}=0$ for $n\geq2$, we get}
     &= (1-n)(1-m)
            \sum_{j=1}^{n+m-1}
              b_{j,n}
               \zeta_2(n+m-j,j) \\
     &\quad + b_{n+m-1,n}(n-m) (n+m-1) \zeta_2(-1,n+m+1).
   \end{align*} 
  Since $\binom{p+q}{p}=\binom{p+q}{q}$ for $p$, $q\in\N$, we have $b_{n+m-1,n} = b_{n+m-1,m}$.
  Thus, we have 
  \begin{align*}
  &\sum_{j=1}^{n+m-1}
          \left\{\binom{j-1}{n-1}+\binom{j-1}{m-1}\right\}
           \zeta^{\des}_2(n+m-j,j) \\
  &=(1-n)(1-m) \sum_{j=1}^{n+m-1} (b_{j,n} + b_{j,m}) \zeta_2(n+m-j,j).
  \intertext{Because $m,n\geq2$, by the shuffle product formula of MZVs, we calculate}
  &= (1-n)(1-m)\zeta(n)\zeta(m)
  =\zeta_1^{\des}(n) \zeta_1^{\des}(m).
  \end{align*}
  Hence, the equation \eqref{eqn:shuffle rel. of zeta^des1} holds for $m,n\geq2$.
  In the same way, we can prove \eqref{eqn:shuffle rel. of zeta^des1} for $n=1$ and $m\geq2$ (or $n\geq2$ and $m=1$).
\end{proof}

In Theorem \ref{cor:shuffle product of deszeta} in \S\ref{sec: Main results}, 
we will generalize this proposition to higher depth.


\section{Construction of $\desLi(\veck)(t)$}
\label{sec: Construstion of desLi(veck)(t)}
In this section, we review the Hopf algebra $(\mathcal H,\shuffle_0,\Delta_0)$ and its Hopf subalgebra $(\mathcal H_-,\shuffle_0,\Delta_0)$ which are introduced in \cite{EMS1}, and we prove some algebraic relations on $\mathcal H_-$ (Proposition \ref{prop:equality of sum of shuffle0} and Proposition \ref{prop:recurrence formula for shuffle0}).
By using these relations, in \S \ref{subsec:Generalization of the shuffle type renormalization}, we give a reinterpretation of the desingularized MZFs at non-positive integers by the renormalization.
In \S\ref{sec:Definition of hypLiveck(t)}, we give a quick review of multiple polylogarithms and
 introduce functions $\desLi(\veck)(t)$. And then, we mention the relationship between these functions for $\veck\in\Z_{\leq0}^r$ and special values $\deszeta_r(\veck)$. 

\subsection{A certain Hopf algebra}\label{subsec:a certain Hopf algebra}
We recall the definition of the Hopf algebra introduced in \cite{EMS1} and its properties.
Put $\Q\langle d,j,y \rangle$ to be the non-commutative polynomial ring generated by three letters $d,j,y$ with $dj=jd=1$.
We define the product $\shuffle_0:\Q\langle d,j,y \rangle^{\otimes2} \rightarrow \Q\langle d,j,y \rangle$ by $w\shuffle_01:=1\shuffle_0w:=w$ and
\begin{align}\label{eqn:def of shuffle0}
yu\shuffle_0v &:= u\shuffle_0yv := y(u\shuffle_0v), \nonumber\\
ju\shuffle_0jv &:= j(u\shuffle_0jv) + j(ju\shuffle_0v), \nonumber\\
du\shuffle_0dv &:= d(u\shuffle_0dv) - u\shuffle_0d^2v, \\
du\shuffle_0jv &:= d(u\shuffle_0jv) - u\shuffle_0v, \nonumber\\
ju\shuffle_0dv &:= d(ju\shuffle_0v) - u\shuffle_0v, \nonumber
\end{align}
for any words $u,v,w$ in $\Q\langle d,j,y \rangle$.
Then the pair $(\Q\langle d,j,y \rangle, \shuffle_0)$ forms a non-commutative, non-associative $\Q$-algebra.
We define $\mathcal T$ to be the linear subspace over $\Q$ of $\Q\langle d,j,y \rangle$ generated by
\footnote{Note that the symbol $j^{-k}$ means $d^k$ for $k\in\N$.}
$$
\left\{ j^{k_1}y\cdots j^{k_{r-1}}yj^{k_r}\in \Q\langle d,j,y \rangle\ |\ r\in\N, k_1,\dots,k_r\in\Z, k_r\neq0 \right\},
$$
and define $\mathcal L$ to be the two-sided ideal of $(\Q\langle d,j,y \rangle, \shuffle_0)$ generated by
\footnote{That is, the words $u$ and $v$ are represented as $j^{k_1}y\cdots j^{k_r}y$ for $r\in\N$ and $k_1,\dots,k_r\in\Z$.}
\begin{equation}\label{def: definition of two sided ideal mathcalL}
\left\{ j^k\{d(u\shuffle_0v)-du\shuffle_0v-u\shuffle_0dv\}\ |\ k\in\Z, \mbox{$u,v$:words of $\Q\langle d,j,y \rangle$ ending in $y$} \right\}.
\end{equation}
It is proved that the subspace $\mathcal T$ is a two-sided ideal of $(\Q\langle d,j,y \rangle, \shuffle_0)$ in \cite[Lemma 3.4]{EMS1}.
We define the quotient algebra
\begin{equation}\label{def: Hopf alg. mathcalH}
\mathcal H:=\Q\langle d,j,y \rangle/(\mathcal T + \mathcal L).
\end{equation}
Then the pair $(\mathcal H, \shuffle_0)$ is a commutative, associative $\Q$-algebra (\cite[Proposition 3.5]{EMS1}).

We consider the non-commutative polynomial ring $\Q\langle d,y \rangle$ generated by two letters $d,y$.
The pair $(\Q\langle d,y \rangle, \shuffle_0)$ forms a subalgebra of $(\Q\langle d,j,y \rangle,\shuffle_0)$.
We put
$$
\mathcal T_-:=\Q\langle d,y \rangle \cap \mathcal T, \qquad \mathcal L_-:=\Q\langle d,y \rangle \cap \mathcal L.
$$
and define
$$
\mathcal H_-:=\Q\langle d,y \rangle/(\mathcal T_- + \mathcal L_-).
$$
Then there exists a coproduct $\Delta_0$ such that the triple $(\mathcal H_-, \shuffle_0,\Delta_0)$ forms a commutative Hopf algebra over $\Q$.
In \cite[Proposition 2.5]{Komi}, the reduced coproduct\footnote{The reduced coproduct $\widetilde{\Delta}_0$ is defined by $\widetilde{\Delta}_0(w):=\Delta_0(w)-1\otimes w-w\otimes 1$ for any words w in $\mathcal H_-$.} $\widetilde{\Delta}_0$ is explicitly given by
\begin{align}\label{eqn:explicit formula reduced coproduct}
&\widetilde{\Delta}_0(d^{k_1}y\cdots d^{k_r}y)
=\sum_{i+j=k_1}\binom{k_1}{i}d^{i}y\otimessym d^{j}d^{k_2}y\cdots d^{k_r}y \\
&+\sum_{p=2}^{r-1}\sum_{\substack{i_l+j_l=k_l \\ 1\leq l\leq p}}\prod_{a=1}^p\binom{k_a}{i_a}
\sum_{\substack{\{u_q,v_q\}=\{d^{i_q},d^{j_q}y\} \\ 1\leq q\leq p-1}}
u_1\cdots u_{p-1}d^{i_p}y\otimessym v_1\cdots v_{p-1}d^{j_p}d^{k_{p+1}}y\cdots d^{k_r}y, \nonumber
\end{align}
for $r\geq2$ and $k_1,\dots,k_r\in\N_0$.
Here, $w_1\otimessym w_2:=w_1\otimes w_2 + w_2\otimes w_1$ for $w_1,w_2\in\mathcal H_-$.
\begin{prop}[{\cite[Corollary 3.24 for $\lambda=0$]{EMS1}}]\label{prop:EMS:cor3.24}
For any word $w(\neq1)$ in $\mathcal H_-$, we have
$$
(2^{\dep(w)}-2)w=\shuffle_0\circ \widetilde{\Delta}_0(w).
$$
Here, the symbol $\dep(w)$ means the number of $y$ appearing in the word $w$.
\end{prop}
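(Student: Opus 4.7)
The plan is to reformulate the identity in Hopf-algebraic terms. Note that because $\Delta_0(w) = \widetilde{\Delta}_0(w) + w\otimes 1 + 1\otimes w$, the claim is equivalent to
\[
\phi(w) := (\shuffle_0\circ\Delta_0)(w) = 2^{\dep(w)}\, w
\]
for every word $w(\neq 1)$ in $\mathcal H_-$, and I would work with the endomorphism $\phi$ throughout.

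First I would show that $\phi$ is an algebra endomorphism of $(\mathcal H_-,\shuffle_0)$. Using Sweedler notation $\Delta_0(x)=\sum x_{(1)}\otimes x_{(2)}$, the bialgebra axiom for $(\mathcal H_-,\shuffle_0,\Delta_0)$ reads
\[
\Delta_0(x\shuffle_0 y) = \sum (x_{(1)}\shuffle_0 y_{(1)})\otimes(x_{(2)}\shuffle_0 y_{(2)}),
\]
and applying $\shuffle_0$ and invoking the commutativity of $\shuffle_0$ to rearrange the four factors yields $\phi(x\shuffle_0 y)=\phi(x)\shuffle_0\phi(y)$.

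Next, I would analyze the primitive elements of $\mathcal H_-$. The Hopf algebra $(\mathcal H_-,\shuffle_0,\Delta_0)$ is connected (its depth-$0$ part is $\Q$), graded by depth, commutative by construction, and cocommutative since only the symmetric tensor $\otimessym$ occurs in \eqref{eqn:explicit formula reduced coproduct}. By Cartier-Milnor-Moore it is therefore the symmetric algebra on its primitive part. Direct inspection of \eqref{eqn:explicit formula reduced coproduct} together with the relations defining $\mathcal T_-$ shows that each depth-$1$ word $d^k y$ ($k\in\N_0$) is primitive; a combinatorial analysis of the same formula then shows that no nonzero linear combination of words of depth $\ge 2$ can be primitive, since each summand of $\widetilde{\Delta}_0$ on such a word contributes two tensor factors of positive depth and the multi-degrees in distinct depth-$r$ words preclude cancellation. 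Hence every primitive element lies in depth $1$.

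Once these steps are in place the conclusion is immediate: for a depth-$1$ primitive $x$ one has $\phi(x) = \shuffle_0(x\otimes 1 + 1\otimes x) = 2x$, and since $\phi$ is multiplicative with $\dep$ additive under $\shuffle_0$, induction gives $\phi(x_1\shuffle_0\cdots\shuffle_0 x_n) = 2^n (x_1\shuffle_0\cdots\shuffle_0 x_n)$ whenever $x_1,\dots,x_n$ are depth-$1$ primitives. By Cartier-Milnor-Moore every depth-$r$ element of $\mathcal H_-$ is a $\Q$-linear combination of such length-$r$ shuffle monomials, whence $\phi(w) = 2^{\dep(w)} w$, equivalently $\shuffle_0\circ\widetilde{\Delta}_0(w) = (2^{\dep(w)}-2)w$. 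The main obstacle will be the depth-concentration of the primitives; a viable alternative, bypassing Cartier-Milnor-Moore, is a direct induction on $\dep(w)$ that unwinds $\shuffle_0\circ\widetilde{\Delta}_0(w)$ using both the explicit formula \eqref{eqn:explicit formula reduced coproduct} and the recursive rules \eqref{eqn:def of shuffle0}, at the cost of considerably heavier combinatorial bookkeeping.
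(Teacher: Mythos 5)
The paper itself does not prove this proposition: it is quoted from \cite[Corollary 3.24 for $\lambda=0$]{EMS1}, so there is no internal argument to compare with, and your proposal has to stand on its own. Its first two steps do: the reduction to showing $\phi:=\shuffle_0\circ\Delta_0$ satisfies $\phi(w)=2^{\dep(w)}w$, the multiplicativity of $\phi$ (the convolution square of the identity on a commutative bialgebra), and the primitivity of the depth-one words $d^ky$ are all correct.

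The genuine gap is the claim that $\mathcal H_-$ has no primitives of depth $\geq2$, and in particular the justification you offer --- that ``the multi-degrees in distinct depth-$r$ words preclude cancellation'' --- is false. Using \eqref{eqn:explicit formula reduced coproduct} one computes
\[
\widetilde{\Delta}_0(d^2y\,y)
=2\,y\otimessym d^2y+2\,dy\otimessym dy
=2\,\widetilde{\Delta}_0(dy\,dy),
\]
so the combination $d^2y\,y-2\,dy\,dy$ has vanishing reduced coproduct when $\widetilde{\Delta}_0$ is evaluated word by word: cancellation between the coproducts of distinct words of the same depth and the same $d$-degree already occurs in depth $2$. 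This element fails to yield a nonzero depth-$2$ primitive only because it is zero in $\mathcal H_-$, i.e.\ because of the relation $d^{k_1}yd^{k_2}y=\sum_{i+j=k_1}\binom{k_1}{i}d^{i}y\shuffle_0 d^{j}d^{k_2}y$ imposed by the ideal $\mathcal L_-$ --- which is exactly Proposition \ref{prop:recurrence formula for shuffle0} (for $r=2$), a statement the paper deduces \emph{from} the proposition you are trying to prove. So the depth-concentration of the primitives is not a formal consequence of the coproduct formula on words; it genuinely uses the relations defining $\mathcal H_-$, and establishing it is essentially of the same difficulty as (given Cartier--Milnor--Moore and multiplicativity, equivalent to) the proposition itself. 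Since your acknowledged fallback --- a direct induction unwinding $\shuffle_0\circ\widetilde{\Delta}_0$ via \eqref{eqn:def of shuffle0} and \eqref{eqn:explicit formula reduced coproduct} --- is not carried out, the proposal as written reduces the statement to an unproved claim of comparable depth rather than proving it.
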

We prove the following by using the above proposition.
\begin{prop}\label{prop:equality of sum of shuffle0}
Let $r\geq3$ and $k_1,\dots,k_r\in\N_0$.
We have
\begin{align*}
&\sum_{\substack{i_l+j_l=k_l \\ 1\leq l\leq p}}\prod_{a=1}^p\binom{k_a}{i_a}
u_1\cdots u_{p-1}d^{i_p}y\shuffle_0 v_1\cdots v_{p-1}d^{j_p}d^{k_{p+1}}y\cdots d^{k_r}y \\
&\hspace{3cm}=\sum_{i+j=k_1}\binom{k_1}{i}d^{i}y\shuffle_0 d^{j}d^{k_2}y\cdots d^{k_r}y,
\end{align*}
for $p\in\{2,\dots,r-1\}$ and $(u_q,v_q)\in\{(d^{i_q},d^{j_q}y),(d^{j_q}y,d^{i_q})\}$ $(1\leq q\leq p-1)$.
\end{prop}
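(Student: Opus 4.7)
The plan is to show that in $\mathcal{H}_-$ both sides of the claimed equality coincide with the single word $w := d^{k_1}y\cdots d^{k_r}y$, by induction on $r$.

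The main tool will be the following key identity: for any $A, B\in\mathcal{H}_-$ ending in $y$ and any $k\in\N_0$,
\[
\sum_{i+j=k}\binom{k}{i}d^iA\shuffle_0 d^jyB \;=\; d^ky\cdot(A\shuffle_0 B),
\]
where the right-hand side is left-concatenation by $d^ky$. This identity combines the Leibniz rule $d^k(X\shuffle_0 Y) = \sum\binom{k}{i}d^iX\shuffle_0 d^{k-i}Y$, valid in $\mathcal{H}_-$ because the relations generating $\mathcal{L}_-$ make $d$ a derivation for $\shuffle_0$ on words ending in $y$, together with the defining shuffle relation $u\shuffle_0 yv = y(u\shuffle_0 v)$ from \eqref{eqn:def of shuffle0}.

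For the inductive step, set $R_1 := u_2\cdots u_{p-1}d^{i_p}y$ and $R_2 := v_2\cdots v_{p-1}d^{j_p}d^{k_{p+1}}y\cdots d^{k_r}y$, both of which end in $y$. For the choice $(u_1, v_1) = (d^{i_1}, d^{j_1}y)$, the partial sum over $(i_1, j_1)$ is precisely the key identity with $A = R_1$, $B = R_2$, and evaluates to $d^{k_1}y\cdot(R_1\shuffle_0 R_2)$. For the complementary choice $(u_1, v_1) = (d^{j_1}y, d^{i_1})$, renaming $i_1 \leftrightarrow j_1$ (using $\binom{k_1}{i_1} = \binom{k_1}{j_1}$), together with $yR_1\shuffle_0 R_2 = R_1\shuffle_0 yR_2$ from \eqref{eqn:def of shuffle0} and the Leibniz rule, yields the same value $d^{k_1}y\cdot(R_1\shuffle_0 R_2)$. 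Hence the LHS equals $d^{k_1}y\cdot(\text{inner sum over positions}\ 2,\ldots,p-1)$, independently of the choice at $q=1$.

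The inner sum is exactly the LHS of the proposition applied to the shifted word $\tilde w := d^{k_2}y\cdots d^{k_r}y$ of depth $r-1$, with split parameter $\tilde p := p-1$ and choice vector $(c_2,\ldots,c_{p-1})$. By the induction hypothesis (or, when $\tilde p = 1$, by one more application of the key identity), the inner sum equals $\tilde w$, so LHS $= d^{k_1}y\cdot\tilde w = w$. Applying the key identity directly to the RHS of the proposition likewise shows RHS $= w$, completing the induction; the base case $r=3$ collapses to a single use of the key identity. The main obstacle is precisely the invariance under switching the choice at $q=1$, which hinges on the subtle interplay between the Leibniz rule and the relation $yu\shuffle_0 v = u\shuffle_0 yv$.
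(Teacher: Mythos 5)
Your strategy is genuinely different from the paper's: you argue at the level of words and concatenation, showing by induction on $r$ that every admissible choice of $(u_q,v_q)$ on the left and the sum on the right all reduce to the single word $d^{k_1}y\cdots d^{k_r}y$ --- which is in fact the stronger Proposition \ref{prop:recurrence formula for shuffle0}, deduced in the paper \emph{from} the present proposition. The paper instead never touches concatenation on the quotient: it obtains the depth-two identity $d^{k_1}yd^{k_2}y=\sum_{i+j=k_1}\binom{k_1}{i}d^iy\shuffle_0 d^{j+k_2}y$ by combining the explicit reduced coproduct \eqref{eqn:explicit formula reduced coproduct} with Proposition \ref{prop:EMS:cor3.24}, encodes it as $\mathcal G(t_1,t_2)=\mathcal G(t_1)\shuffle_0\mathcal G(t_1+t_2)$ (see \eqref{eqn:recurrence formula of G for r=2}), and then unfolds the generating functions of both sides, for any choice of $(u_q,v_q)$, into the choice-independent product $\mathcal G(t_1)\shuffle_0\mathcal G(t_1+t_2)\shuffle_0\cdots\shuffle_0\mathcal G(t_1+\cdots+t_r)$. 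Your route is not circular, but it obliges you to supply yourself the Leibniz-type input that the paper imports from \cite{Komi} and \cite{EMS1}.

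Two steps in your write-up are asserted rather than proved, and they are exactly where the content lies. First, the iterated Leibniz rule $d^k(X\shuffle_0 Y)\equiv\sum_{i+j=k}\binom{k}{i}d^iX\shuffle_0 d^jY$ in $\mathcal H_-$: only $k=1$ is a defining relation of $\mathcal L_-$; for $k\geq2$ one needs an induction using that the generators in \eqref{def: definition of two sided ideal mathcalL} carry arbitrary $j^k$-prefixes, so that $d^m\{d(u\shuffle_0 v)-du\shuffle_0 v-u\shuffle_0 dv\}$ is again a generator and $d^iX$, $d^jY$ still end in $y$. This is fixable in a few lines but must be said; note also that your key identity, whose second factor has the shape $d^jyB$, does not literally apply to the right-hand side of the proposition or to the inner sums with split parameter $1$, where the second factor is $d^jd^{k_2}y\cdots$; there you need the variant $\sum_{i+j=k}\binom{k}{i}d^iy\shuffle_0 d^jC\equiv d^k(y\shuffle_0 C)=d^kyC$. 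Second, and more seriously, your induction substitutes the induction hypothesis \emph{inside a concatenation}: from ``inner sum $=d^{k_2}y\cdots d^{k_r}y$ in $\mathcal H_-$'' you conclude ``$d^{k_1}y\cdot(\mbox{inner sum})=d^{k_1}y\,d^{k_2}y\cdots d^{k_r}y$ in $\mathcal H_-$''. This presupposes that left concatenation by $d^{k_1}y$ descends to the quotient, i.e.\ that $\mathcal T_-+\mathcal L_-$ is stable under prepending $y$ and $d$. That is a nontrivial lemma (the $y$-case follows from $u\shuffle_0 yv=y(u\shuffle_0 v)$ in \eqref{eqn:def of shuffle0}; the $d$-case again requires the Leibniz generators together with the ideal property of $\mathcal T$), it appears nowhere in the paper --- whose proof stays inside the $\shuffle_0$-algebra and the coproduct precisely to avoid it --- and your argument uses it silently at every stage, including the base case $r=3$. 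Until you prove this stability, or restructure the induction so that only $\shuffle_0$-products of honest words are compared, the proof has a genuine gap.
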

\begin{proof}
For $r\geq1$, we consider the following generating functions
$$
\mathcal G(t_1,\dots,t_r)
:=\sum_{k_1,\dots,k_r\geq0}
\frac{t_1^{k_1}\cdots t_r^{k_r}}{k_1!\cdots k_r!}d^{k_1}y\cdots d^{k_r}y
 \ \left( \in \mathcal H_-[[t_1,\dots,t_r]] \right).
$$
When $r=2$, by using the equation \eqref{eqn:explicit formula reduced coproduct} and Proposition \ref{prop:EMS:cor3.24}, we have
$$
d^{k_1}yd^{k_2}y=\sum_{i+j=k_1}\binom{k_1}{i}d^{i}y\shuffle_0 d^{j+k_2}y,
$$
for $k_1,k_2\in\N_0$. 
So we get
\begin{align*}
\mathcal G(t_1,t_2)
&=\sum_{k_1,k_2\geq0}
\frac{t_1^{k_1} t_2^{k_2}}{k_1!k_2!}
\sum_{i+j=k_1}\binom{k_1}{i}d^{i}y\shuffle_0 d^{j+k_2}y
=\sum_{i,j,k_2\geq0}
\frac{t_1^{i+j} t_2^{k_2}}{i!j!k_2!}
d^{i}y\shuffle_0 d^{j+k_2}y \\
&=\left\{
\sum_{i\geq0}\frac{t_1^{i}}{i!}
d^{i}y
\right\}
\shuffle_0 
\left\{
\sum_{k\geq0}
\frac{1}{k!}
d^{k}y
\sum_{j+k_2=k}\binom{k}{j}t_1^{j} t_2^{k_2}
\right\} \\
&=\mathcal G(t_1)
\shuffle_0 
\left\{
\sum_{k\geq0}
\frac{(t_1+t_2)^k}{k!}
d^{k}y
\right\} 
=\mathcal G(t_1) \shuffle_0 \mathcal G(t_1+t_2).
\end{align*}
Therefore, we get
\begin{equation}\label{eqn:recurrence formula of G for r=2}
\mathcal G(t_1,t_2)=\mathcal G(t_1) \shuffle_0 \mathcal G(t_1+t_2).
\end{equation}
When $r\geq3$, by direct calculation, we know
\begin{align*}
&\sum_{k_1,\dots,k_r\geq0}
\frac{t_1^{k_1}\cdots t_r^{k_r}}{k_1!\cdots k_r!}
\left\{
\sum_{\substack{i_l+j_l=k_l \\ 1\leq l\leq p}}\prod_{a=1}^p\binom{k_a}{i_a}
u_1\cdots u_{p-1}d^{i_p}y\shuffle_0 v_1\cdots v_{p-1}d^{j_p}d^{k_{p+1}}y\cdots d^{k_r}y 
\right\} \\
&=\mathcal G(t_1\circ_1t_2\circ_1\dots\circ_{p-1}t_p)\shuffle_0\mathcal G(t_1\diamond_1t_2\diamond_2\dots\diamond_{p-1}t_p+t_{p+1},\dots,t_r).
\end{align*}
Here, the symbols $\circ_q$ and $\diamond_q$ are defined by
$$
(\circ_q,\diamond_q)
:=\left\{\begin{array}{ll}
(+,\ \scalebox{2}{,}\ ) & \left( (u_q,v_q)=(d^{i_q}, d^{j_q}y) \right), \\
(\ \scalebox{2}{,}\ ,+) & \left( (u_q,v_q)=(d^{j_q}y, d^{i_q}) \right),
\end{array}
\right.
$$
for $1\leq q\leq p-1$.
By using \eqref{eqn:recurrence formula of G for r=2} and by induction on $r\geq3$, we get
\begin{align*}
&\mathcal G(t_1\circ_1t_2\circ_1\dots\circ_{p-1}t_p)\shuffle_0\mathcal G(t_1\diamond_1t_2\diamond_2\dots\diamond_{p-1}t_p+t_{p+1},\dots,t_r) \\
&=\mathcal G(t_1)\shuffle_0\mathcal G(t_1+t_2)\shuffle_0\cdots\shuffle_0\mathcal G(t_1+\cdots+t_r) \\
&=\mathcal G(t_1)\shuffle_0\mathcal G(t_1+t_2,t_3,\dots,t_r).
\end{align*}
Because we have
\begin{align*}
\mathcal G(t_1)\shuffle_0\mathcal G(t_1+t_2,t_3,\dots,t_r)
=\sum_{k_1,\dots,k_r\geq0}
\frac{t_1^{k_1}\cdots t_r^{k_r}}{k_1!\cdots k_r!}
\left\{
\sum_{i+j=k_1}\binom{k_1}{i}d^{i}y\shuffle_0 d^{j}d^{k_2}y\cdots d^{k_r}y
\right\},
\end{align*}
hence we obtain the claim.
\end{proof}

\begin{prop}\label{prop:recurrence formula for shuffle0}
For $r\geq2$ and $k_1,\dots,k_r\in\N_0$, we have
\begin{equation*}
d^{k_1}y\cdots d^{k_r}y=\sum_{i+j=k_1}\binom{k_1}{i}d^{i}y\shuffle_0 d^{j}d^{k_2}y\cdots d^{k_r}y.
\end{equation*}
\end{prop}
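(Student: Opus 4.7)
The plan is to combine the explicit formula \eqref{eqn:explicit formula reduced coproduct} for the reduced coproduct with Proposition \ref{prop:EMS:cor3.24} and the just-established Proposition \ref{prop:equality of sum of shuffle0}. Set $w:=d^{k_1}y\cdots d^{k_r}y$, so $\dep(w)=r$, and abbreviate the target right-hand side by
\[
S:=\sum_{i+j=k_1}\binom{k_1}{i}d^{i}y\shuffle_0 d^{j}d^{k_2}y\cdots d^{k_r}y.
\]
I would apply the shuffle product $\shuffle_0$ termwise to $\widetilde{\Delta}_0(w)$ as expanded in \eqref{eqn:explicit formula reduced coproduct}, using that $\shuffle_0$ is commutative on $\mathcal H_-$, so every symmetrized tensor $w_1\otimessym w_2$ contributes $2(w_1\shuffle_0 w_2)$ when fed into $\shuffle_0$.

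For the leading ($p=1$) summand, applying $\shuffle_0$ yields directly $2S$. For each $p\in\{2,\dots,r-1\}$, the outer $\otimessym$ again contributes a factor of $2$, while the inner sum ranges over the $2^{p-1}$ unordered assignments $\{u_q,v_q\}=\{d^{i_q},d^{j_q}y\}$ for $1\le q\le p-1$; by Proposition \ref{prop:equality of sum of shuffle0}, each of these $2^{p-1}$ shuffled expressions equals $S$, so the $p$-th summand contributes $2^p S$ in total. Summing over $p=1,\dots,r-1$ then gives
\[
\shuffle_0\circ\widetilde{\Delta}_0(w) = \Bigl(\sum_{p=1}^{r-1}2^p\Bigr) S = (2^r-2)\,S.
\]

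Combining this with Proposition \ref{prop:EMS:cor3.24} yields $(2^r-2)\,w = (2^r-2)\,S$, and since $r\ge 2$ forces $2^r-2\neq 0$, cancelling produces the desired identity $w=S$. The main bookkeeping obstacle will be correctly matching the combinatorial factors of $2$: one from the symmetrization $\otimessym$ and $2^{p-1}$ from the number of unordered choices $\{u_q,v_q\}$, so that each $p$-th term contributes exactly $2^p S$ and the total $(2^r-2)\,S$ cancels cleanly against the coefficient $(2^{\dep(w)}-2)$ on the left-hand side of Proposition \ref{prop:EMS:cor3.24}.
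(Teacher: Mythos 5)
Your proof is correct and follows essentially the same route as the paper: expand $\widetilde{\Delta}_0(d^{k_1}y\cdots d^{k_r}y)$ via \eqref{eqn:explicit formula reduced coproduct}, apply Proposition \ref{prop:EMS:cor3.24}, and use Proposition \ref{prop:equality of sum of shuffle0} to collapse every $p\geq2$ summand to the target sum $S$, with the factors $2$ (from $\otimessym$) and $2^{p-1}$ (from the choices of $\{u_q,v_q\}$) combining to $(2^r-2)S$. The only cosmetic difference is that the paper divides by $2$ first and solves $w=\frac{1}{2^{r-1}-1}\{S+(2^{r-1}-2)S\}$, and treats $r=2$ separately, whereas you cancel $(2^r-2)$ directly; the bookkeeping is identical.
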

\begin{proof}
When $r=2$, we obtain the claim by the equation \eqref{eqn:explicit formula reduced coproduct}.
Set $r\geq3$.
By using the equation \eqref{eqn:explicit formula reduced coproduct} and Proposition \ref{prop:EMS:cor3.24}, we have\footnote{Note that $\shuffle_0(w_1\otimessym w_2)=2(w_1\shuffle_0w_2)$ for words $w_1,w_2$ in $\mathcal H_-$.}
\begin{align*}
d^{k_1}y\cdots d^{k_r}y
&=\frac{1}{2^{r-1}-1}\left\{
\sum_{i+j=k_1}\binom{k_1}{i}d^{i}y\shuffle_0 d^{j}d^{k_2}y\cdots d^{k_r}y
\right. \\
&\hspace{-2.1cm}\left.
+\sum_{p=2}^{r-1}\sum_{\substack{i_l+j_l=k_l \\ 1\leq l\leq p}}\prod_{a=1}^p\binom{k_a}{i_a}
\sum_{\substack{\{u_q,v_q\}=\{d^{i_q},d^{j_q}y\} \\ 1\leq q\leq p-1}}
u_1\cdots u_{p-1}d^{i_p}y\shuffle_0 v_1\cdots v_{p-1}d^{j_p}d^{k_{p+1}}y\cdots d^{k_r}y
\right\}.
\intertext{By using Proposition \ref{prop:equality of sum of shuffle0}, we get}
&=\frac{1}{2^{r-1}-1}\left\{
\sum_{i+j=k_1}\binom{k_1}{i}d^{i}y\shuffle_0 d^{j}d^{k_2}y\cdots d^{k_r}y
\right. \\
&\hspace{3cm}\left.
+(2^{r-1}-2)\sum_{i+j=k_1}\binom{k_1}{i}d^{i}y\shuffle_0 d^{j}d^{k_2}y\cdots d^{k_r}y
\right\} \\
&=\sum_{i+j=k_1}\binom{k_1}{i}d^{i}y\shuffle_0 d^{j}d^{k_2}y\cdots d^{k_r}y.
\end{align*}
Hence, we obtain the claim.
\end{proof}

\subsection{Generalization of the shuffle type renormalization}\label{subsec:Generalization of the shuffle type renormalization}
In this subsection, we first recall the algebraic Birkhoff decomposition which is a fundamental tool in the work of Connes and Kreimer (\cite{CK}) on their Hopf algebraic approach to the renormalization of the perturbative quantum field theory.

Let $\Q[[z,z^{-1}]$ be the $\Q$-algebra consisting of all formal Laurent series.
Put $G=G(\mathcal H_-,\Q[[z,z^{-1}])$ to be the set of all $\Q$-linear map $\varphi:\mathcal H_- \rightarrow \Q[[z,z^{-1}]$ with $\varphi(1)=1$.
For $\varphi,\psi\in G$, we define the \textit{convolution} $\varphi*\psi\in G$ by
$$
\varphi*\psi:=m \circ (\varphi \otimes \psi) \circ \Delta_0.
$$
Here, $m$ is the natural product of $\Q[[z,z^{-1}]$.
Then the pair $(G,*)$ forms a group, whose unit is given by a map $e=u \circ \varepsilon_{\mathcal H_-}$. Here, the map $u$ is the natural unit of $\Q[[z,z^{-1}]$ and the map $\varepsilon_{\mathcal H_-}$ is the counit of $\mathcal H_-$ (see \cite{Man} for detail).

\begin{thm}[{\cite{CK}, \cite{EMS1}}: \textbf{algebraic Birkhoff decomposition}]
\label{thm:algebraic Birkhoff decomposition}
For $\varphi\in G$, there exist unique $\Q$-linear maps $\varphi_+:\mathcal H_- \rightarrow \Q[[z]]$ and $\varphi_-:\mathcal H_- \rightarrow \Q[z^{-1}]$ with $\varphi_-(1)=1\in\Q$ such that
\footnote{Note that the symbol $\varphi_-^{*-1}$ means the inverse element of $\varphi_-$ in $(G(\mathcal H_-,\Q[[z,z^{-1}]),*)$.}
$$
\varphi=\varphi_-^{*-1}*\varphi_+.
$$
Moreover, the maps $\varphi_+$ and $\varphi_-$ form algebra homomorphisms if the map $\varphi$ is an algebra homomorphism.
\end{thm}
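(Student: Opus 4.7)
The plan is to adapt the classical Connes--Kreimer Rota--Baxter argument (\cite{CK}, \cite{EMS1}) to the commutative connected filtered Hopf algebra $(\mathcal{H}_-, \shuffle_0, \Delta_0)$ at hand. First, I would observe that $\Q[[z,z^{-1}] = \Q[[z]] \oplus z^{-1}\Q[z^{-1}]$ is a decomposition into two $\Q$-subalgebras, and let $R:\Q[[z,z^{-1}] \to \Q[[z,z^{-1}]$ denote the projection onto the pole part $z^{-1}\Q[z^{-1}]$. Splitting $a = a_+ + a_-$, $b = b_+ + b_-$ and using that $\Q[[z]]$ and $z^{-1}\Q[z^{-1}]$ are subalgebras gives directly the Rota--Baxter identity
\[
R(a)R(b) + R(ab) = R(R(a)b) + R(aR(b)),
\]
and $1-R$ is the projection onto $\Q[[z]]$.

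Next, using the depth filtration of $\mathcal{H}_-$ (by the number of $y$'s appearing in a word), I would define $\varphi_\pm$ recursively by the Bogoliubov-type formulas: set $\varphi_\pm(1) := 1$, and for $x \in \ker\varepsilon_{\mathcal{H}_-}$ with reduced coproduct $\widetilde{\Delta}_0(x) = \sum_{(x)} x' \otimes x''$,
\[
\varphi_-(x) := -R\Bigl(\varphi(x) + \sum_{(x)} \varphi_-(x')\,\varphi(x'')\Bigr), \quad
\varphi_+(x) := (1-R)\Bigl(\varphi(x) + \sum_{(x)} \varphi_-(x')\,\varphi(x'')\Bigr).
\]
The recursion is well-founded because $\widetilde{\Delta}_0$ strictly lowers depth on each tensor factor. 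By construction $\varphi_-$ takes values in $\Q[z^{-1}]$ with $\varphi_-(1)=1$, and $\varphi_+$ in $\Q[[z]]$. Subtracting the two defining identities gives $\varphi_+(x) - \varphi_-(x) = \varphi(x) + \sum_{(x)} \varphi_-(x')\varphi(x'')$, which, when compared with the convolution expansion $(\varphi_- * \varphi)(x) = \varphi(x) + \varphi_-(x) + \sum_{(x)} \varphi_-(x')\varphi(x'')$ that follows from $\Delta_0(x) = 1 \otimes x + x \otimes 1 + \widetilde{\Delta}_0(x)$, is equivalent to $\varphi_+ = \varphi_- * \varphi$, i.e., $\varphi = \varphi_-^{*-1} * \varphi_+$.

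For uniqueness, suppose $\varphi = \psi_-^{*-1} * \psi_+$ is another such decomposition. Rearranging, $\psi_- * \varphi_-^{*-1} = \psi_+ * \varphi_+^{*-1}$; the left-hand side is valued in $\Q[z^{-1}]$ and the right-hand side in $\Q[[z]]$. Since $\Q[[z]] \cap \Q[z^{-1}] = \Q$ and both sides send $1$ to $1$, an induction on depth shows that both coincide with the unit $e$ of $(G,*)$, forcing $\psi_\pm = \varphi_\pm$.

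The hard part is the multiplicativity statement: if $\varphi$ is an algebra homomorphism, so are $\varphi_\pm$. I would prove this by induction on depth, combining (a) the Hopf-algebra compatibility of $\Delta_0$ with $\shuffle_0$, which yields a combinatorial expansion of $\widetilde{\Delta}_0(w_1 \shuffle_0 w_2)$ in terms of the $\widetilde{\Delta}_0(w_i)$, with (b) the Rota--Baxter identity, which lets one recombine mixed terms so that $\varphi_-(w_1 \shuffle_0 w_2) - \varphi_-(w_1)\varphi_-(w_2)$ telescopes into the image under $R$ of a sum whose summands vanish by the inductive hypothesis. Once $\varphi_-$ is known to be multiplicative, the multiplicativity of $\varphi_+ = \varphi_- * \varphi$ follows automatically, since the convolution product of two algebra homomorphisms into a commutative target algebra is again an algebra homomorphism. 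This is exactly the Connes--Kreimer argument in the form given in \cite{EMS1}.
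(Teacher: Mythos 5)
The paper never proves Theorem \ref{thm:algebraic Birkhoff decomposition}: it is imported from \cite{CK} and \cite{EMS1}, and the only trace of an argument in the text is the Bogoliubov-type recursion recorded in Remark \ref{rem:recurrence formula of varphi+,-}. So the relevant comparison is with those sources, and your route is exactly theirs: the projection $\pi$ onto the pole part is an idempotent Rota--Baxter operator because $\Q[[z,z^{-1}]=\Q[[z]]\oplus z^{-1}\Q[z^{-1}]$ is a splitting into subalgebras, your recursive definition of $\varphi_\pm$ coincides verbatim with the formulas in Remark \ref{rem:recurrence formula of varphi+,-} (with $R=\pi$), the verification that $\varphi_+=\varphi_-*\varphi$ on $\ker\varepsilon_{\mathcal H_-}$ is correct, and so is the closing remark that a convolution of algebra homomorphisms into a commutative algebra is again an algebra homomorphism. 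One hypothesis you use silently is that the depth filtration of $\mathcal H_-$ is compatible with $\Delta_0$, so that $\widetilde{\Delta}_0$ strictly lowers depth in each tensor factor and your recursions and inductions terminate; this is true and can be read off from \eqref{eqn:explicit formula reduced coproduct}, but it should be stated, since everything rests on it.

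Two steps need repair or completion. First, uniqueness: with only ``$\varphi_-$ valued in $\Q[z^{-1}]$ and $\varphi_-(1)=1$'' the decomposition is \emph{not} unique --- on a primitive word such as $d^ky$ the identity reads $\varphi(d^ky)=\varphi_+(d^ky)-\varphi_-(d^ky)$, and any constant can be shifted between the two terms. Your argument only shows that $\psi_-*\varphi_-^{*-1}=\psi_+*\varphi_+^{*-1}$ takes values in $\Q[z^{-1}]\cap\Q[[z]]=\Q$, which does not force it to equal $e$. You must add the normalization $\varphi_-(\ker\varepsilon_{\mathcal H_-})\subset z^{-1}\Q[z^{-1}]$ (your construction satisfies it automatically, and it is implicit in the paper's recursion via $\pi$), check that this property is preserved under convolution and convolution inverse, and then conclude that the common value lies in $z^{-1}\Q[z^{-1}]\cap\Q[[z]]=\{0\}$ on $\ker\varepsilon_{\mathcal H_-}$. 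Second, the multiplicativity of $\varphi_-$ is the substantive ``Moreover'' clause, and your treatment of it is a plan rather than a proof: the phrase about mixed terms telescoping ``into the image under $R$ of a sum whose summands vanish by the inductive hypothesis'' is precisely the computation that has to be exhibited, namely the induction on depth applied to $\varphi_-(w_1)\varphi_-(w_2)-\varphi_-(w_1\shuffle_0 w_2)$ using the Rota--Baxter identity for $\pi$ and the compatibility of $\Delta_0$ with $\shuffle_0$; as it stands nothing is proved. Either carry out that induction in full or cite it (\cite{CK}, \cite{EMS1}, \cite{Man}); once $\varphi_-$ is known to be a character, your deduction that $\varphi_+=\varphi_-*\varphi$ is one as well is fine.
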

\begin{rem}\label{rem:recurrence formula of varphi+,-}
We consider the projection $\pi: \Q[[z,z^{-1}] \rightarrow \Q[z^{-1}]$ defined by
$$
\pi\left( \sum_{n\geq -N}a_nz^n \right):=\sum_{n= -N}^{-1}a_nz^n
$$
for $a_n\in\Q$ and $N\in\N_0$.
By the above theorem, we can inductively calculate $\varphi_+$ and $\varphi_-$ by
\begin{align*}
& \varphi_-(w)
= -\pi \left(
\varphi(w)
+ \sum_{(w)}\varphi_-(w')\varphi(w'')
\right), \\
& \varphi_+(w)
= (\Id-\pi) \left(
\varphi(w)
+ \sum_{(w)}\varphi_-(w')\varphi(w'')
\right).
\end{align*}
Here, we denote Sweedler's notation of the reduced coproduct $\widetilde{\Delta}_0$ by
$$
\widetilde{\Delta}_0(w)=\sum_{(w)}w'\otimes w''.
$$
Note that, when $\varphi(\mathcal H_-) \subset \Q[[z]]$, we have
\begin{equation}\label{eqn:recurrence formula of varphi+,- for formal power series}
\varphi_-(w)=0,\qquad
\varphi_+(w)= \varphi(w).
\end{equation}
\end{rem}

Let $f(z)$ be a Laurent series in $\Q[[z,z^{-1}]$.
We define the $\Q$-linear map $\phi:\mathcal H_- \rightarrow \Q[[z,z^{-1}]$ by $\phi(1):=1$ and
\begin{equation}\label{eqn:def of phi by iterated differential}
\phi(d^{k_1}y\cdots d^{k_r}y)
:=\partial_z^{k_1}\Bigl[ f(z)
\partial_z^{k_2}\left[ f(z)
\cdots \partial_z^{k_r}[f(z)] \cdots
\right]
\Bigr],
\end{equation}
for $k_1,\dots,k_r\in\N_0$.
Here, the symbol $\partial_z$ is the derivative by $z$.
By the Leibniz rule of the derivative $\partial_z$, we know that the map $\phi$ forms an algebra homomorphism.
By applying the above theorem to this map $\phi$, we get the algebra homomorphism $\phi_+:\mathcal H_- \rightarrow \Q[[z]]$.
We define $F(k_1,\dots,k_r)\in\Q$ by
$$
F(k_1,\dots,k_r):=\lim_{z \rightarrow 0} \phi_+(d^{k_r}y\cdots d^{k_1}y)
$$
for $r\in\N$ and $k_1,\dots,k_r\in\N_0$.
We set the generating function of $F(k_1,\dots,k_r)$ by
$$
\mathcal F(z_1,\dots,z_r)
:=\sum_{k_1,\dots,k_r\geq0}
\frac{z_1^{k_1}\cdots z_r^{k_r}}{k_1!\cdots k_r!}F(k_1,\dots,k_r).
$$
The following explicit formulae of $\mathcal F(z_1,\dots,z_r)$ holds.
\begin{thm}\label{thm:explicit formula of F}
For $r\in\N$, we have
$$
\mathcal F(z_1,\dots,z_r)
=f_{\geq0}(z_r)f_{\geq0}(z_{r-1}+z_r)\cdots f_{\geq0}(z_1+\cdots+z_r).
$$
Here, $f_{\geq0}(z)$ is defined by
$$
f_{\geq0}(z):=\sum_{n_\geq0}\frac{a_n}{n!} z^n
$$
for $f(z)=\sum_{n\geq -N}\frac{a_n}{n!}z^n$ $(N\in\N_0)$ with $a_n\in\Q$.
\end{thm}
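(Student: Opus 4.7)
My plan is to proceed by induction on $r$, combining Proposition \ref{prop:recurrence formula for shuffle0} with the fact, given by Theorem \ref{thm:algebraic Birkhoff decomposition}, that $\phi_+$ is an algebra homomorphism from $(\mathcal H_-,\shuffle_0)$ into $(\Q[[z]],\cdot)$.

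For the base case $r=1$, I would verify that $\phi_+(d^{k_1}y) = (\Id-\pi)\phi(d^{k_1}y)$ directly: inspecting the reduced coproduct formula \eqref{eqn:explicit formula reduced coproduct}, the only potentially non-primitive contributions from a single-$y$ word would involve tensor factors of the form $d^j$ with $j>0$, but such factors are killed in $\mathcal H_-$ by the ideal $\mathcal T_-$, so single-$y$ words are primitive and the recurrence in Remark \ref{rem:recurrence formula of varphi+,-} has no correction terms. Writing $f(z) = \sum_{n\geq -N}\frac{a_n}{n!}z^n$, a direct computation gives $\partial_z^{k_1}f(z) = \sum_n \frac{a_n}{(n-k_1)!}z^{n-k_1}$, whose constant term is $a_{k_1}$, yielding $F(k_1) = a_{k_1}$ and hence $\mathcal F(z_1) = f_{\geq 0}(z_1)$.

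For the inductive step, I would apply Proposition \ref{prop:recurrence formula for shuffle0} to the word $d^{k_r}y\, d^{k_{r-1}}y\cdots d^{k_1}y$, peeling off the leftmost block to obtain
\begin{equation*}
d^{k_r}y\cdots d^{k_1}y \;=\; \sum_{i+j=k_r}\binom{k_r}{i}\, d^{i}y \shuffle_0 d^{j+k_{r-1}}y\, d^{k_{r-2}}y\cdots d^{k_1}y,
\end{equation*}
where the stray $d^{j}$ has been absorbed into the adjacent block. Applying the algebra homomorphism $\phi_+$ to both sides and passing to the limit $z\to 0$ yields the scalar recurrence
\begin{equation*}
F(k_1,\dots,k_r) \;=\; \sum_{i+j=k_r}\binom{k_r}{i}\, F(i)\, F(k_1,\dots,k_{r-2},\,k_{r-1}+j).
\end{equation*}
Multiplying by $\prod_{a=1}^{r} z_a^{k_a}/k_a!$ and summing over $\veck\in\N_0^r$, the $i$-sum factors out as $f_{\geq 0}(z_r)$ by the base case, while the convolution identity $\sum_{j=0}^{m}\frac{z_{r-1}^{m-j}z_r^j}{(m-j)!\,j!} = \frac{(z_{r-1}+z_r)^m}{m!}$ collapses the remaining double sum into $\mathcal F(z_1,\dots,z_{r-2},\,z_{r-1}+z_r)$. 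I thereby obtain the generating-function recurrence
\begin{equation*}
\mathcal F(z_1,\dots,z_r) \;=\; f_{\geq 0}(z_r)\cdot \mathcal F(z_1,\dots,z_{r-2},\,z_{r-1}+z_r),
\end{equation*}
and the claimed product formula then follows by substituting the inductive hypothesis into the second factor on the right-hand side.

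The main obstacle I anticipate is the base case: one must be careful about how Birkhoff decomposition interacts with the quotient that defines $\mathcal H_-$, specifically verifying that single-$y$ words are genuinely primitive there so that Remark \ref{rem:recurrence formula of varphi+,-} reduces to projecting off the polar part. Once the base case is established, the inductive step is essentially a bookkeeping exercise combining the shuffle recurrence, multiplicativity of $\phi_+$, and the binomial convolution identity, with no further analytic subtleties.
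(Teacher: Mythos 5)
Your proposal is correct and follows essentially the same route as the paper: the base case via the primitivity of $d^{k}y$ and Remark \ref{rem:recurrence formula of varphi+,-}, then the inductive step combining Proposition \ref{prop:recurrence formula for shuffle0} with the multiplicativity of $\phi_+$ and the binomial convolution to get $\mathcal F(z_1,\dots,z_r)=f_{\geq0}(z_r)\mathcal F(z_1,\dots,z_{r-2},z_{r-1}+z_r)$. The paper's proof is the same argument, phrased as repeated application of this recurrence rather than as a formal induction.
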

\begin{proof}
When $r=1$, we have $\widetilde{\Delta}_0(d^ky)=0$, so by Remark \ref{rem:recurrence formula of varphi+,-}, we calculate
\begin{align*}
F(k_1)
=\lim_{z \rightarrow 0} \phi_+(d^{k_1}y)
=\lim_{z \rightarrow 0} (\Id-\pi) \left( \phi(d^{k_1}y) \right)
=\lim_{z \rightarrow 0} (\Id-\pi) \left( \partial_z^{k_1}[f(z)] \right)
=a_{k_1}.
\end{align*}
Therefore, we get
\begin{equation}\label{eqn:r=1 for gen. fun. of F}
\mathcal F(z_1)
=\sum_{k_1\geq0}
\frac{z_1^{k_1}}{k_1!}F(k_1)
=\sum_{k_1\geq0}
\frac{z_1^{k_1}}{k_1!}a_{k_1}
=f_{\geq0}(z_1).
\end{equation}
Let $r\geq2$.
By Proposition \ref{prop:recurrence formula for shuffle0}, we have
\begin{align*}
F(k_1,\dots,k_r)
&=\lim_{z \rightarrow 0} \phi_+(d^{k_r}y\cdots d^{k_1}y)
=\lim_{z \rightarrow 0} \phi_+\left(
\sum_{i+j=k_r}\binom{k_r}{i}d^{i}y\shuffle_0 d^{j}d^{k_{r-1}}y\cdots d^{k_1}y
\right) \\
&=\lim_{z \rightarrow 0} 
\sum_{i+j=k_r}\binom{k_r}{i}
\phi_+\left( d^{i}y \right)
\phi_+\left( d^{j}d^{k_{r-1}}y\cdots d^{k_1}y \right) \\
&=\sum_{i+j=k_r}\binom{k_r}{i}
F(i)F(k_1,\dots,k_{r-2},k_{r-1}+j).
\end{align*}
So we calculate
\begin{align*}
&\mathcal F(z_1,\dots,z_r)
=\sum_{k_1,\dots,k_r\geq0}
\frac{z_1^{k_1}\cdots z_r^{k_r}}{k_1!\cdots k_r!}F(k_1,\dots,k_r) \\
&=\sum_{k_1,\dots,k_r\geq0}
\frac{z_1^{k_1}\cdots z_r^{k_r}}{k_1!\cdots k_r!}
\sum_{i+j=k_r}\binom{k_r}{i}
F(i)F(k_1,\dots,k_{r-2},k_{r-1}+j) \\
&=\left\{
\sum_{i\geq0}\frac{z_r^{i}}{i!}F(i)
\right\}\left\{
\sum_{k_1,\dots,k_{r-2},k\geq0}
\frac{z_1^{k_1}\cdots z_{r-2}^{k_{r-2}}}{k_1!\cdots k_{r-2}!k!}F(k_1,\dots,k_{r-2},k)
\sum_{k_{r-1}+j=k}\binom{k}{j}z_{r-1}^{k_{r-1}}z_r^{j}
\right\} \\
&=\left\{
\sum_{i\geq0}\frac{z_r^{i}}{i!}F(i)
\right\}\left\{
\sum_{k_1,\dots,k_{r-2},k\geq0}
\frac{z_1^{k_1}\cdots z_{r-2}^{k_{r-2}}(z_{r-1} + z_r)^{k}}{k_1!\cdots k_{r-2}!k!}F(k_1,\dots,k_{r-2},k)
\right\} \\
&=\mathcal F(z_r)\mathcal F(z_1,\dots z_{r-2},z_{r-1} + z_r)
\end{align*}
By the equation \eqref{eqn:r=1 for gen. fun. of F}, we get
$$
\mathcal F(z_1,\dots,z_r)
=f_{\geq0}(z_r)\mathcal F(z_1,\dots z_{r-2},z_{r-1} + z_r).
$$
Hence, by using this equation repeatedly, we obtain the claim.
\end{proof}

By Theorem \ref{thm:explicit formula of F}, we know that the special values $F(k_1,\dots,k_r)$ are independent on the pole parts of $f(z)$.
When $f(z)\in\Q[[z]]$, the following theorem holds.

\begin{thm}\label{thm:equivalent of iterated differential}
Let $F(k_1,\dots,k_r)\in\Q$ and put $f(z):=\sum_{k\geq0}\frac{z^k}{k!}a(k)$ with $a(k)\in\Q$.
Then the following two are equivalent:
\begin{enumerate}
\item For $r\geq1$, we have
$$
\sum_{k_1,\dots,k_r\geq0}
\frac{z_1^{k_1}\cdots z_r^{k_r}}{k_1!\cdots k_r!}F(k_1,\dots,k_r)
=f(z_r)f(z_{r-1}+z_r)\cdots f(z_1+\cdots+z_r).
$$
\item For $r\geq1$ and $k_1,\dots,k_r\in\N_0$, we have
$$
F(k_1,\dots,k_r)
=\lim_{z\rightarrow0}
\partial_z^{k_r}\Bigl[ f(z)
\partial_z^{k_{r-1}}\left[ f(z)
\cdots \partial_z^{k_1}[f(z)] \cdots
\right] \Bigr].
$$
\end{enumerate}
\end{thm}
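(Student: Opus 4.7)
The plan is to handle the two directions of the equivalence separately. The direction (2) $\Rightarrow$ (1) reduces almost immediately to the preceding Theorem~\ref{thm:explicit formula of F}, while (1) $\Rightarrow$ (2) is proved by induction on $r$ after checking that both candidate expressions for $F(k_1,\dots,k_r)$ satisfy the same recurrence with matching base case.

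For (2) $\Rightarrow$ (1), I would first observe that because $f(z)\in\Q[[z]]$, the algebra homomorphism $\phi$ defined in \eqref{eqn:def of phi by iterated differential} takes values in $\Q[[z]]$, so the formula \eqref{eqn:recurrence formula of varphi+,- for formal power series} in Remark~\ref{rem:recurrence formula of varphi+,-} forces $\phi_+=\phi$. Thus the iterated-differential expression appearing on the right of (2) is exactly $\lim_{z\to 0}\phi_+(d^{k_r}y\cdots d^{k_1}y)$, which is precisely the quantity studied in Theorem~\ref{thm:explicit formula of F}. Since $f_{\geq 0}=f$ when $f\in\Q[[z]]$, that theorem yields the product formula in (1).

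For (1) $\Rightarrow$ (2), let $G(k_1,\dots,k_r)$ denote the iterated-differential expression on the right of (2). When $r=1$, both sides equal $a(k_1)$. For $r\geq 2$, factor the right-hand side of (1) as
$$
f(z_r)\cdot f(z_{r-1}+z_r)\cdots f(z_1+\cdots+z_r) = f(z_r)\cdot\mathcal F(z_1,\dots,z_{r-2},z_{r-1}+z_r),
$$
and extract the coefficient of $z_1^{k_1}\cdots z_r^{k_r}/(k_1!\cdots k_r!)$, expanding $(z_{r-1}+z_r)^{k_{r-1}+j}$ by the binomial theorem, to obtain the recurrence
$$
F(k_1,\dots,k_r) = \sum_{i+j=k_r}\binom{k_r}{i}a(i)\,F(k_1,\dots,k_{r-2},k_{r-1}+j).
$$
On the other hand, applying the Leibniz rule to the outermost $\partial_z^{k_r}$ in $G(k_1,\dots,k_r)$ and evaluating at $z=0$ produces the identical recurrence for $G$. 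Induction on $r$ then yields $F=G$, which is (2).

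No substantial obstacle is anticipated; the only care needed lies in the bookkeeping of indices when extracting coefficients and in aligning the Leibniz expansion with the nested-differential form. The argument is self-contained and uses only the product-formula identity, the Leibniz rule, and the identification $\phi_+=\phi$ for $f\in\Q[[z]]$ that was already recorded in the preceding subsection.
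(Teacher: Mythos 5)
Your proposal is correct and follows essentially the same route as the paper: the direction (2) $\Rightarrow$ (1) is reduced to Theorem \ref{thm:explicit formula of F} via the identification $\phi_+=\phi$ from \eqref{eqn:recurrence formula of varphi+,- for formal power series} (with $f_{\geq0}=f$ since $f\in\Q[[z]]$), and (1) $\Rightarrow$ (2) is proved by induction on $r$, extracting coefficients from the factorization $f(z_r)\mathcal F(z_1,\dots,z_{r-2},z_{r-1}+z_r)$ to get the binomial recurrence and matching it against the Leibniz expansion of the outermost $\partial_z^{k_r}$. The only cosmetic difference is that you phrase the inductive step as ``both sides satisfy the same recurrence with the same base case,'' whereas the paper substitutes the induction hypothesis directly and reassembles via the Leibniz rule; these are the same argument.
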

\begin{proof}
At first, we prove (1) from (2).
We consider the map $\phi$ in \eqref{eqn:def of phi by iterated differential}.
Then, by the assumption (2) and the equation \eqref{eqn:recurrence formula of varphi+,- for formal power series}, we have
\begin{align*}
F(k_1,\dots,k_r)
=\lim_{z\rightarrow0}
\phi(d^{k_r}y\cdots d^{k_1}y)
=\lim_{z\rightarrow0}
\phi_+(d^{k_r}y\cdots d^{k_1}y).
\end{align*}
So by Theorem \ref{thm:explicit formula of F}, we obtain (1).

Next, we prove (2) from (1) by induction on $r$.
When $r=1$, it is obvious by the definition of $f(z)$.
Assume that (2) holds for the case $r<r_0(\geq1)$.
When $r=r_0$, by (1), we have
\begin{align*}
&\sum_{k_1,\dots,k_r\geq0}
\frac{z_1^{k_1}\cdots z_r^{k_r}}{k_1!\cdots k_r!}F(k_1,\dots,k_r) \\
&=f(z_r)
\left\{
\sum_{k_1,\dots,k_{r-1}\geq0}
\frac{z_1^{k_1}\cdots z_{r-2}^{k_{r-2}}(z_{r-1} + z_r)^{k_{r-1}}}{k_1!\cdots k_{r-2}!k_{r-1}!}F(k_1,\dots,k_{r-1})
\right\}.
\end{align*}
By comparing coefficients of the term $z_1^{k_1}\cdots z_r^{k_r}$ of both sides, we get
\begin{align*}
F(k_1,\dots,k_r)
&=\sum_{i+j=k_r}\binom{k_r}{i}
F(i)F(k_1,\dots,k_{r-2},k_{r-1}+j).
\intertext{By the induction hypothesis, we have}
&=\sum_{i+j=k_r}\binom{k_r}{i}
\left\{\lim_{z\rightarrow0}
\partial_z^{i}\left[ f(z)\right] \right\}
\left\{\lim_{z\rightarrow0}
\partial_z^{j+k_{r-1}}\left[ f(z)
\cdots \partial_z^{k_1}[f(z)] \cdots
\right]\right\}.
\intertext{By the Leibniz rule of $\partial_z$, we obtain}
&=\lim_{z\rightarrow0}
\partial_z^{k_r}\Bigl[ f(z)
\partial_z^{k_{r-1}}\left[ f(z)
\cdots \partial_z^{k_1}[f(z)] \cdots
\right] \Bigr].
\end{align*}
Hence, we finish the proof.
\end{proof}

We put 
\begin{align}\label{eqn:formal power series as gen. fun. of zetades}
g(z):=\frac{e^z\left\{ (1+z)-e^z \right\}}{(e^z-1)^2} 
\left( =\frac{ze^z}{e^z-1}\cdot \frac{1}{z}\left( \frac{z}{e^z-1} -1 \right) \in\Q[[z]] \right).
\end{align}
We note that this formal series $g(z)$ corresponds with $Z(t_1)|_{t_1=-z}$ in Proposition \ref{prop:explicit formula of gen. fun Z} for $r=1$ in \S\ref{Desingularized MZFs}.
\begin{prop}\label{prop:iterated differential rep. of zetades}
For any $r\geq1$ and $k_1,\dots,k_r\in\N_0$, we have
$$
\zeta^\des_r(-k_1,\dots,-k_r)
=\lim_{z\rightarrow0}
\partial_z^{k_r}\Bigl[ g(z)
\partial_z^{k_{r-1}}\left[ g(z)
\cdots \partial_z^{k_1}[g(z)] \cdots
\right] \Bigr].
$$
\end{prop}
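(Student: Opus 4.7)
The plan is to reduce the claim to Theorem \ref{thm:equivalent of iterated differential} applied to $f(z) = g(z)$, using the explicit generating function of $\zeta^\des_r(-k_1,\dots,-k_r)$ provided by Proposition \ref{prop:explicit formula of gen. fun Z}. The key is to recognize that the product form appearing on the right-hand side of Proposition \ref{prop:explicit formula of gen. fun Z} is, after a change of variables, exactly the nested product $f(z_r)f(z_{r-1}+z_r)\cdots f(z_1+\cdots+z_r)$ required by condition (1) of that theorem.

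First I would make the substitution $t_i \mapsto -z_i$ in the generating function $Z(t_1,\dots,t_r)$, producing
\[
\sum_{k_1,\dots,k_r\ge0} \frac{z_1^{k_1}\cdots z_r^{k_r}}{k_1!\cdots k_r!}\, \zeta^\des_r(-k_1,\dots,-k_r)
= \prod_{i=1}^{r} \frac{(1+z_i+\cdots+z_r)\,e^{-(z_i+\cdots+z_r)} - 1}{(e^{-(z_i+\cdots+z_r)} - 1)^2}.
\]
The next step is to rewrite each factor by multiplying numerator and denominator by $e^{2(z_i+\cdots+z_r)}$, which turns the $i$-th factor into $g(z_i+\cdots+z_r)$ with $g$ as in \eqref{eqn:formal power series as gen. fun. of zetades}. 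This identifies the product with $g(z_r)\,g(z_{r-1}+z_r)\cdots g(z_1+\cdots+z_r)$ (the factors of the original product appear in reverse order, which is precisely what Theorem \ref{thm:explicit formula of F} / Theorem \ref{thm:equivalent of iterated differential} prescribes).

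Since $g(z)\in\Q[[z]]$ (observe that the apparent singularity of $z/(e^z-1)-1$ at $z=0$ is removable and divides out the $1/z$), Theorem \ref{thm:equivalent of iterated differential} applies with $F(k_1,\dots,k_r) := \zeta^\des_r(-k_1,\dots,-k_r)$ and $f(z) := g(z)$. Having verified statement (1) of that theorem, statement (2) yields exactly the desired iterated-differential expression.

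I do not anticipate a real obstacle: the proof is essentially a change-of-variables calculation combined with an application of Theorem \ref{thm:equivalent of iterated differential}. The only point requiring minor care is the algebraic manipulation that exhibits each factor as $g(z_i+\cdots+z_r)$ and the bookkeeping showing that the order of factors in the product matches the form $f(z_r)\cdots f(z_1+\cdots+z_r)$ rather than its reverse; however, both arrangements are simply reindexings of the same product, so this is a formality.
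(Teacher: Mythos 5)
Your proposal is correct and is essentially the paper's own argument: the paper proves this proposition in one line by combining Proposition \ref{prop:explicit formula of gen. fun Z} with Theorem \ref{thm:equivalent of iterated differential}, and your substitution $t_i\mapsto -z_i$ together with the identification of each factor as $g(z_i+\cdots+z_r)$ (cf.\ the paper's remark that $g(z)=Z(t_1)|_{t_1=-z}$) just spells out the details of that reduction. The bookkeeping you flag is indeed only a reindexing of a commutative product, so there is no gap.
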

\begin{proof}
By Proposition \ref{prop:explicit formula of gen. fun Z} in \S\ref{Desingularized MZFs} and Theorem \ref{thm:equivalent of iterated differential}, we get the claim.
\end{proof}

\subsection{Definition of $\desLi(\veck)(t)$}
\label{sec:Definition of hypLiveck(t)}
In this subsection, we review the multiple polylogarithms. 
And then, we introduce $\desLi(\veck)(t)$ which is defined by a certain iterated integration. 
At the end of this section, we show the equality for $\desLi(\veck)(t)$ and $\deszeta_r(\veck)$ 
when $(k_1,\dots,k_r)\in\Z^r_{\le0}$ (Proposition \ref{prop:special values of L at non-positive integers}).

The {\it multiple polylogarithms} (MPLs for short) $\Li_{k_1,\dots,k_r}(t)$ with $(k_1,\dots,k_r)\in\Z^r$,
 is the complex analytic function defined by the following series:
\[
  \Li_{k_1,\dots,k_r}(t) := \sum_{0<n_1<\cdots<n_r}
                                              \frac{t^{n_r}}{n_{1}^{k_1}\cdots n_r^{k_r}}
\] 
which converges for $t\in\C$ with $|t|<1$.
The MPL for the case $r=1$, $k=1$ is given by $\Li_1(t)=-\log(1-t)$.
Consult \cite{Zhao2} for many topics related to MPLs.
We will describe some properties of MPLs.
The first to mention is the following {\it iterated integral} expression:
\begin{equation}\label{eqn:iterated integral expression of MPLs}
  \Li_{k_1,\dots,k_r}(t)
   = \int_{0}^t
        \underbrace{ 
         \frac{dt}{t}\circ\dots\circ\frac{dt}{t} 
                           }_{k_r-1}
           \circ\frac{dt}{1-t}
            \circ
             \cdots 
              \circ
               \underbrace{ 
                \frac{dt}{t}\circ\dots\circ\frac{dt}{t} 
                                  }_{k_1-1}
                  \circ\frac{dt}{1-t},
\end{equation}
where $(k_1,\dots,k_r)\in\N^r$.
This yields analytic continuation to a bigger region
\footnote{More precisely, MPLs can be analytically continued to the universal unramified covering $\mathbb{P}^1(\C)\setminus\{0,1,\infty\}$.}.

It is known that $\Li_{-k_1,\dots,-k_r}(t)$ is a rational function of $t$ for $(k_1,\dots,k_r)\in\N_0^r$, 
for instance,
\begin{equation}\label{eqn:PL at non-positive integers}
  \Li_0(t)=\frac{t}{1-t},\qquad
  \Li_{-1}(t)=\frac{t}{(1-t)^2},\qquad
  \Li_{-2}(t)=\frac{t(t+1)}{(1-t)^3}.
\end{equation}

The following differential equation holds for MPLs.
By the definition, one can easily see that 
\begin{equation*}
  \frac{d}{dt}
    \Li_{k_1,\dots,k_r} (t)
  = \begin{cases}
        \frac{1}{t} \Li_{k_1,\dots,k_r}(t) , \quad(k_r\ne 1) \\
        \frac{1}{1-t} \Li_{k_1,\dots,k_{r-1}}(t) , \quad(k_r= 1).
      \end{cases}
\end{equation*}
We note that $\frac{1}{1-t} \Li_{k_1,\dots,k_{r-1}}(t)=\frac{1}{t} \Li_{k_1,\dots,k_{r-1},0}(t)$
for $k_r=1$.
By the above differential equation, 
we see that possible singularities of MPLs for any indices $(k_1,\dots,k_r)\in\Z^r$ are $t=0,\,1$.

Put $\R[[t]]$ to be the algebra of formal power series.
We consider the subalgebra $\mathcal P_{(0,1)}$ of $t\R[[t]]$ defined by
\begin{equation}\label{eqn:def of P(0,1)}
\mathcal P_{(0,1)}:=\{ f(t)\in t\R[[t]]\ |\ \mbox{$f(t)$ converges for $t\in(0,1)$} \}.
\end{equation}
We also set 
$$
\mathcal A:=\mathcal P_{(0,1)}[ \log t ]=\left\{ \sum_{\substack{1\le i \\ 0\le j\le N}}a_{ij}t^i(\log t)^j \,\middle|\, N\in\N_0,\,\forall j\in\N_0,\,\sum_{1\le i}a_{ij}t^i\in\mathcal P_{(0,1)} \right\}.
$$ 

\begin{lem}\label{lem:operators J,D}
For $f \in \mathcal A$, we define
\footnote{These map $J,D$ are also considered in \cite[\S 3.1]{EMS1} as operators on a certain $\C$-algebra of power series.}
$$
J[f]:= \int_0^tf(z)\frac{dz}{z} ,\qquad
D[f]:= t\frac{df}{dt}.
$$
Then $J$ and $D$ form operators on $\mathcal A$, and $J\circ D=D\circ J=\Id$.
\end{lem}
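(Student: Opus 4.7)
The plan is to verify the lemma by (a) checking that $J$ and $D$ preserve $\mathcal A$, and (b) establishing the two identities by straightforward calculus, where the only delicate point is the vanishing of boundary values at $t=0$.

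First I would reduce to monomials. Every $f\in\mathcal A$ can be written as $f=\sum_{j=0}^{N}g_j(t)(\log t)^j$ with $g_j(t)=\sum_{i\geq 1}a_{ij}t^i\in\mathcal P_{(0,1)}$. For $D$ the Leibniz rule gives
\[
D[t^i(\log t)^j]=i\,t^i(\log t)^j+j\,t^i(\log t)^{j-1},
\]
so $D$ maps the monomial basis into $\mathcal A$; the coefficient series $tg_j'(t)=\sum_i ia_{ij}t^i$ has the same radius of convergence as $g_j$, hence still lies in $\mathcal P_{(0,1)}$. For $J$ I would compute
\[
J[t^i(\log t)^j]=\int_0^t z^{i-1}(\log z)^j\,dz=\sum_{k=0}^{j}\frac{(-1)^k j!}{(j-k)!\,i^{k+1}}\,t^i(\log t)^{j-k},
\]
by iterated integration by parts (the boundary term $z^i(\log z)^{j-k}$ vanishes at $z=0$ since $i\geq 1$). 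Extending by linearity and checking that $\sum_i (a_{ij}/i)\,t^i$ still converges on $(0,1)$ (via $i^{1/i}\to 1$) shows $J[f]\in\mathcal A$.

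For the identity $D\circ J=\mathrm{Id}$, the fundamental theorem of calculus gives directly
\[
D[J[f]]=t\,\frac{d}{dt}\!\int_0^t f(z)\,\frac{dz}{z}=t\cdot\frac{f(t)}{t}=f(t).
\]
For $J\circ D=\mathrm{Id}$,
\[
J[D[f]]=\int_0^t z\,f'(z)\,\frac{dz}{z}=\int_0^t f'(z)\,dz=f(t)-\lim_{z\to 0^+}f(z).
\]
The only thing to check here is that $\lim_{z\to 0^+}f(z)=0$. This follows from the fact that each summand $g_j(z)(\log z)^j$ tends to $0$, because $g_j(z)=O(z)$ near $0$ and $z(\log z)^j\to 0$ for every fixed $j\geq 0$.

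There is no real obstacle; the minor points requiring attention are the convergence of the power series after applying $J$ or $D$, and the vanishing of $f$ at the lower endpoint—both handled by the built-in factor of $t$ in the definition of $\mathcal P_{(0,1)}$. One could equivalently verify both identities at the level of monomials $t^i(\log t)^j$ from the explicit formula above, obtaining telescoping sums that collapse to $t^i(\log t)^j$; I would include that as a short sanity check to avoid any worry about differentiating under the integral sign.
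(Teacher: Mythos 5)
Your argument is correct, but it follows a different route from the paper's. The paper also reduces to terms $g(t)(\log t)^l$ with $g\in\mathcal P_{(0,1)}$ and handles $D$ by the Leibniz rule exactly as you do; the difference is in the treatment of $J$: rather than computing $J$ explicitly on monomials $t^i(\log t)^j$, the paper uses the integration-by-parts identity $J[f_1f_2]=J[f_1]f_2-J\bigl[J[f_1]D[f_2]\bigr]$ with $f_1=g(t)$, $f_2=(\log t)^l$, notes that the boundary term $\lim_{t\to+0}J[g(t)](\log t)^l$ vanishes, and then inducts on the power $l$ of $\log t$, never leaving the level of whole elements of $\mathcal P_{(0,1)}$ (closure of $\mathcal P_{(0,1)}$ itself under $J$ and $D$ is taken as given, as in your radius-of-convergence remark); the identities $J\circ D=D\circ J=\Id$ are dismissed as easy, and your FTC-plus-boundary-vanishing verification is precisely the justification. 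What your approach buys is an explicit closed formula for $J[t^i(\log t)^j]$, useful for concrete computations; what it costs is that ``extending by linearity'' over the infinite series is really an interchange of summation and integration down to the endpoint $0$, which needs a (standard) justification, e.g.\ dominated convergence using that the partial sums of $\sum_i a_{ij}z^{i-1}$ are uniformly bounded on $[0,t]$ and $|\log z|^j$ is integrable there --- note the delicate point is integrating the series term by term, not differentiating under the integral sign as your closing remark suggests. The paper's induction on the $\log$-power avoids this interchange altogether and is structurally closer to the shuffle-type manipulations used later in the paper.
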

\begin{proof}
It is easy to show $J\circ D=D\circ J=\Id$, so we prove $J[f], D[f] \in \mathcal A$ for any $f \in \mathcal A$.
It is enough to prove $D\left[ g(t)(\log t)^l \right],J\left[ g(t)(\log t)^l \right] \in \mathcal A$ for $g(t)\in \mathcal P_{(0,1)}$ and $l\geq1$.
We have
\begin{align*}
D\left[ g(t)(\log t)^l \right]
=D\left[ g(t) \right](\log t)^l + g(t)D\left[ (\log t)^l \right]
=D\left[ g(t) \right](\log t)^l + g(t)l (\log t)^{l-1}.
\end{align*}
Because we have $D\left[ g(t) \right]\in \mathcal P_{(0,1)}$ by $g(t)\in \mathcal P_{(0,1)}$, we get $D\left[ g(t)(\log t)^l \right]\in\mathcal A$.

On the other hand, by using integration by parts, we have
$$
J\left[ f_1f_2 \right]
=J\left[ f_1 \right]f_2 - J\left[ J[f_1]D[f_2] \right],
$$
for $f_1,f_2\in\mathcal A$.
By using this for $f_1=g(t)$ and $f_2=(\log t)^l$, we calculate
\begin{align*}
J\left[ g(t)(\log t)^l \right]
&=\left[J\left[ g(t) \right](\log t)^l\right]_0^t - J\left[ J[g(t)]D[(\log t)^l] \right] \\
&=J\left[ g(t) \right](\log t)^l - \lim_{t\rightarrow+0}J\left[ g(t) \right](\log t)^l - J\left[ J[g(t)]l (\log t)^{l-1} \right].
\end{align*}
We have $J\left[ g(t) \right]\in \mathcal P_{(0,1)}$ by $g(t)\in \mathcal P_{(0,1)}$ and have $\displaystyle \lim_{t\rightarrow+0}t^{k} (\log t)^l=0$ for $k,l\geq1$, so we get
$$
\lim_{t\rightarrow+0}J\left[ g(t) \right](\log t)^l=0.
$$
Therefore, we get
$$
J\left[ g(t)(\log t)^l \right]=J\left[ g(t) \right](\log t)^l - J\left[ J[g(t)]l (\log t)^{l-1} \right].
$$
Because we have $J\left[ h(t)(\log t)^0 \right]\in\mathcal A$ for $h(t)\in \mathcal P_{(0,1)}$, we inductively obtain $J\left[ g(t)(\log t)^l \right]\in\mathcal A$ for $l\geq1$.
Hence, we finish the proof.
\end{proof}

By the above lemma, we denote $J^{-1}=D$.
By using these operators $J, D$ on $\mathcal A$, we introduce the following elements in $\mathcal A$.
\begin{defn}\label{def:definition of L}
Put $\veck:=(k_1,\dots,k_r)\in\Z^r$ for $r\in\N$.
We define
\footnote{Because $\Li_0(t)$ and $\Li_{-1}(t)$ are in $\mathcal P_{(0,1)}$, 
the element $\desLi(0)(t)$ is in $\mathcal A$.}
$$
\desLi(0)(t):=\Li_0(t)+\log t\cdot\Li_{-1}(t)=\frac{t}{1-t} + \frac{t\log t}{(1-t)^2},
$$
and
\begin{equation*}
\desLi(\veck)(t)
:=J^{k_r}\Bigl[ \desLi(0)(t)
J^{k_{r-1}}\left[ \desLi(0)(t)
\cdots J^{k_1}[\desLi(0)(t)] \cdots
\right] \Bigr].
\end{equation*}
Because we have $\desLi(0)(t)\in\mathcal A$, by Lemma \ref{lem:operators J,D}, 
we get $\desLi(\veck)(t)\in\mathcal A$.
\end{defn}

For $\veck=(k_1,\dots,k_r)\in\Z^r$, we set
\begin{align*}
&\veck':=(k_1,\dots,k_{r-1},k_r-1)\in\Z^r, \\
&\veck^{(j)}:=(k_1,\dots,k_{r-1},k_r-j)\in\Z^r,
\end{align*}
for $j\in\Z$.
Note that $\veck^{(1)}=\veck'$.

\begin{rem}\label{rem:differential equation of L}
By the above definition, we have
\begin{align*}
D[\desLi(\veck)(t)]=\desLi(\veck')(t), \qquad
\desLi(\veck,0)(t)=\desLi(0)(t)\desLi(\veck)(t),
\end{align*}
for $\veck\in\Z^r$.
\end{rem}

\begin{lem}\label{lem:alg. hom. from mathcalH to mathcalA}
We define the $\Q$-linear map\footnote{The Hopf algebra $\mathcal H$ is defined in \eqref{def: Hopf alg. mathcalH}.} $\psi: \mathcal H \rightarrow \mathcal A$ by $\psi(1):=1$ and
$$
\psi(j^{k_1}y \cdots j^{k_r}y):=\desLi(k_r,\dots,k_1)(t),
$$
for $k_1,\dots,k_r\in\Z$.
Then this map $\psi$ forms an algebra homomorphism.\footnote{This lemma is an analogue of \cite[Lemma 3.6]{EMS1} for ordinary MPLs.}
\end{lem}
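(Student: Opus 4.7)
The plan is to extend $\psi$ to a $\Q$-linear map on the free algebra $\Q\langle d,j,y\rangle$, verify by induction that $\psi$ turns each of the five defining rules of $\shuffle_0$ into a valid operator identity in $\mathcal A$, and finally observe that $\psi$ descends to $\mathcal H=\Q\langle d,j,y\rangle/(\mathcal T+\mathcal L)$. The main technical obstacle is organizing the case analysis across the five shuffle rules cleanly and matching it with the operator calculus of $J$ and $D$ on $\mathcal A$.

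First I would extend $\psi$ to the entire free algebra by keeping $\psi(1)=1$ and by setting $\psi(w):=0$ for any nonempty word $w$ not ending in $y$; this automatically annihilates the generators of $\mathcal T$, so the extension factors through $\Q\langle d,j,y\rangle/\mathcal T$. On words ending in $y$ the prescribed formula is equivalent, via Definition \ref{def:definition of L}, to the recursion $\psi(jw)=J[\psi(w)]$, $\psi(dw)=D[\psi(w)]$, $\psi(yw)=\desLi(0)(t)\cdot\psi(w)$ starting from $\psi(y)=\desLi(0)(t)$, and all intermediate values stay inside $\mathcal A$ by Lemma \ref{lem:operators J,D}. I would then prove multiplicativity $\psi(u\shuffle_0 v)=\psi(u)\psi(v)$ by induction on word length, with case analysis on the leading letters of $u$ and $v$ according to the rules \eqref{eqn:def of shuffle0}. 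Under the dictionary $j\leftrightarrow J$, $d\leftrightarrow D$, $y\leftrightarrow$ multiplication by $\desLi(0)(t)$, each shuffle rule becomes a standard identity in $\mathcal A$: the two $y$-rules reduce to commutativity and associativity of multiplication; the $jj$-rule translates into the integration-by-parts identity $J[f]\cdot J[g]=J[f\cdot J[g]]+J[J[f]\cdot g]$, itself an immediate consequence of $D\circ J=\Id$ and the Leibniz rule applied to $D[J[f]\cdot J[g]]$; the $dd$-rule is a direct application of the Leibniz rule; and the mixed $dj$- and $jd$-rules combine the Leibniz rule with $D\circ J=\Id$. In every case the inductive hypothesis reduces the identity to one of these operator formulas, which hold in $\mathcal A$ by Lemma \ref{lem:operators J,D}.

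Finally, once multiplicativity is established, $\psi$ automatically vanishes on $\mathcal L$: for $u,v$ ending in $y$, multiplicativity together with the Leibniz rule gives $\psi(d(u\shuffle_0 v))=D[\psi(u)\psi(v)]=D[\psi(u)]\psi(v)+\psi(u)D[\psi(v)]=\psi(du\shuffle_0 v)+\psi(u\shuffle_0 dv)$, so $\psi$ kills the generators $j^k\{d(u\shuffle_0 v)-du\shuffle_0 v-u\shuffle_0 dv\}$ of $\mathcal L$, since applying $J$ or $D$ to $0$ yields $0$. Therefore $\psi$ descends to $\mathcal H$ as the desired algebra homomorphism.
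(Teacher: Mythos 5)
Your overall strategy is the same as the paper's: under the dictionary $j\leftrightarrow J$, $d\leftrightarrow D$, $y\leftrightarrow$ multiplication by $\desLi(0)(t)$, the five rules \eqref{eqn:def of shuffle0} become exactly the identities supplied by commutativity of multiplication in $\mathcal A$, the Leibniz rule, integration by parts, and $D\circ J=J\circ D=\Id$ from Lemma \ref{lem:operators J,D}; the paper's proof is precisely this observation, stated as a sketch, and your case-by-case matching of the rules with these operator identities, as well as your Leibniz computation annihilating the generators $j^k\{d(u\shuffle_0 v)-du\shuffle_0 v-u\shuffle_0 dv\}$, is correct.

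There is, however, a concrete step that fails as written. With the extension $\psi(w):=0$ for nonempty words not ending in $y$, multiplicativity on all of $\Q\langle d,j,y\rangle$ is false: for $u=jy$, $v=j$ the rules give $jy\shuffle_0 j=j(y\shuffle_0 j)+j(jy\shuffle_0 1)=jyj+j^2y$, hence $\psi(jy\shuffle_0 j)=J^2[\desLi(0)(t)]\neq0$, whereas $\psi(jy)\psi(j)=J[\desLi(0)(t)]\cdot 0=0$. So the induction you propose cannot be carried out over arbitrary pairs of words, and the closing argument (``kills the generators of $\mathcal L$, hence kills $\mathcal L$, hence descends'') also loses its justification, since it is exactly full multiplicativity --- the kernel of a multiplicative map being an ideal --- that it invokes. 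The repair is to restrict the whole induction to words ending in $y$ (together with the empty word), whose classes span $\mathcal H$: this class is stable under the recursion \eqref{eqn:def of shuffle0} (in each of the five rules both sub-shuffles again involve only words ending in $y$), so your case analysis goes through verbatim there and yields $\psi(u\shuffle_0 v)=\psi(u)\psi(v)$ for such $u,v$, which together with your generator computation is the verification the paper's one-line proof alludes to. A minor additional point: ``induction on word length'' does not terminate for the $dd$-rule, whose term $u\shuffle_0 d^2v$ preserves the total length; induct instead on the pair (length of the left word, length of the right word) ordered lexicographically.
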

\begin{proof}
By Definition \ref{def:definition of L}, we have
\begin{equation*}\label{eqn:alg. hom. psi}
\psi(j^{k_1}y \cdots j^{k_r}y)
=J^{k_1}\Bigl[ \desLi(0)(t)
J^{k_{2}}\left[ \desLi(0)(t)
\cdots J^{k_r}[\desLi(0)(t)] \cdots
\right] \Bigr].
\end{equation*}
By the Leibniz rule and the integration by parts and the definition \eqref{eqn:def of shuffle0}, it is clear that the map $\psi$ is an algebra homomorphism.
\end{proof}

The element $\desLi(\veck)(t)\in\mathcal{A}$ defined in Definition \ref{def:definition of L} 
converges to $\deszeta_r(k_1,\dots,k_r)$ when $(k_1,\dots,k_r)\in\Z^r_{\le 0}$ under the limit
$t\rightarrow1-0$.

\begin{prop}\label{prop:special values of L at non-positive integers}
For $r\in\N$ and $\veck:=(k_1,\dots,k_r)\in\Z_{\leq0}^r$, we have
$$
\lim_{t\rightarrow1-0}\desLi(\veck)(t)=\deszeta_r(k_1,\dots,k_r).
$$
\end{prop}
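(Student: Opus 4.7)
The plan is to reduce the iterated-integral definition of $\desLi(\veck)(t)$ at $t\to1^{-}$ to the iterated-differential formula for $\deszeta_r(k_1,\dots,k_r)$ already established in Proposition \ref{prop:iterated differential rep. of zetades}. The natural move is the change of variable $t=e^{w}$, under which $t\to1-0$ corresponds to $w\to0-$, and a chain-rule computation gives
$$\frac{d}{dw}=e^{w}\frac{d}{dt}=t\frac{d}{dt}=D,$$
so $D$ becomes ordinary differentiation with respect to $w$.

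The first key step is to verify the base-case identity $\desLi(0)(e^{w})=g(w)$, where $g(w)$ is the formal series in \eqref{eqn:formal power series as gen. fun. of zetades}. Substituting $t=e^{w}$ (so that $\log t=w$) into Definition \ref{def:definition of L} gives
$$\desLi(0)(e^{w})=\frac{e^{w}}{1-e^{w}}+\frac{w\,e^{w}}{(1-e^{w})^{2}}=\frac{e^{w}\bigl[(1+w)-e^{w}\bigr]}{(e^{w}-1)^{2}}=g(w),$$
which in particular shows that $\desLi(0)(e^{w})$ is analytic in a neighbourhood of $w=0$.

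Since each component $k_{j}\le0$, we have $J^{k_{j}}=D^{-k_{j}}=\partial_{w}^{-k_{j}}$ under the above substitution, i.e.\ pure iterated differentiation with no integrations to worry about. Propagating the change of variable through the nested construction of $\desLi(\veck)(t)$ in Definition \ref{def:definition of L} and using the identity just established, one obtains
$$\desLi(\veck)(e^{w})=\partial_{w}^{-k_{r}}\!\Bigl[g(w)\,\partial_{w}^{-k_{r-1}}\!\bigl[g(w)\cdots\partial_{w}^{-k_{1}}[g(w)]\cdots\bigr]\Bigr].$$
The right-hand side is analytic near $w=0$ because $g(w)\in\Q[[w]]$ and the operations preserve analyticity, so the limit $w\to0$ may be taken inside.

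Passing to the limit $w\to0$ and writing $n_{j}:=-k_{j}\in\N_{0}$, Proposition \ref{prop:iterated differential rep. of zetades} identifies this limit exactly with $\deszeta_{r}(-n_{1},\dots,-n_{r})=\deszeta_{r}(k_{1},\dots,k_{r})$, yielding
$$\lim_{t\to1-0}\desLi(\veck)(t)=\lim_{w\to0}\desLi(\veck)(e^{w})=\deszeta_{r}(k_{1},\dots,k_{r}).$$
The only genuinely non-routine step is the base-case computation $\desLi(0)(e^{w})=g(w)$; once this is in hand, everything else is a matter of transporting operators through the substitution and citing Proposition \ref{prop:iterated differential rep. of zetades}.
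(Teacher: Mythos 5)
Your proposal is correct and follows essentially the same route as the paper's own proof: the same change of variables $t=e^{z}$ turning $D$ into $\partial_z$, the same base-case identification $\desLi(0)(t)=g(z)$, and the same appeal to Proposition \ref{prop:iterated differential rep. of zetades} to identify the limit. The only difference is your explicit remark on analyticity near $w=0$, which the paper leaves implicit.
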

\begin{proof}
Because we have $J^{-1}=D$, we get $J^{-k}=D^k$ for $k\geq0$.
Therefore, for $k_1,\dots,k_r\in\N_0$, we have
\begin{align*}
\desLi(-k_1,\dots,-k_r)(t)
=D^{k_r}\Bigl[ \desLi(0)(t)
D^{k_{r-1}}\left[ \desLi(0)(t)
\cdots D^{k_1}[\desLi(0)(t)] \cdots
\right] \Bigr].
\end{align*}
Consider changing variables $t=e^z$. Then we have
$$
D=t\frac{d}{dt}=e^z\frac{dz}{dt}\cdot\frac{d}{dz}=\frac{d}{dz}=\partial_z,
$$
and, by \eqref{eqn:PL at non-positive integers}, we get
$$
\desLi(0)(t)
=\frac{t}{1-t} + \log t \frac{t}{(1-t)^2}
=\frac{e^z}{1-e^z} + \frac{ze^z}{(1-e^z)^2}
=g(z).
$$
Here, $g(z)$ is given in \eqref{eqn:formal power series as gen. fun. of zetades}.
So we get
$$
\desLi(-k_1,\dots,-k_r)(t)
=\partial_z^{k_r}\Bigl[ g(z)
\partial_z^{k_{r-1}}\left[ g(z)
\cdots \partial_z^{k_1}[g(z)] \cdots
\right] \Bigr].
$$
When $t\rightarrow 1$, we have $z\rightarrow0$, so by Proposition \ref{prop:iterated differential rep. of zetades}, we obtain
$$
\lim_{t\rightarrow1}\desLi(-k_1,\dots,-k_r)(t)=\zeta^\des_r(-k_1,\dots,-k_r).
$$
Hence, we finish the proof.
\end{proof}

In Theorem \ref{thm:holomorphic of desLi} in \S\ref{sec: Main results}, 
we will generalize this proposition.

\section{Linear combinations of MPLs}
\label{sec: Linear combinations of MPLs}
In this section, we consider certain finite linear combinations $Z(\veck;t)$ of MPLs to express $\desLi(\veck)(t)$ as a finite linear combination of MPLs (Definition \ref{def:infinite sum M of Mq}). 
In \S\ref{subsec: Definition of M(veck) and its properties}, we introduce the function $\deszeta_r(s_1,\dots,s_r)(t)$ and explain its properties.
By using these, we show some properties of $Z(\veck;t)$.
In \S\ref{subsec: On the equality of Z(veck;t) and desLi(veck)(t)}, 
we show that $Z(\veck;t)$ and $\desLi(\veck)(t)$ are equal (Theorem \ref{thm:M(veck)=hypLi}).

\subsection{Definition of $Z(\veck)$ and its properties}
\label{subsec: Definition of M(veck) and its properties}
In this subsection, we consider a certain finite linear combination of MPLs.
To this end, we first consider certain multiple zeta functions 
and their properties which will be employed in later sections.

\begin{defn}[cf. {\cite{EMT}}]
  For $0< t < 1$ and any $\vecs=(s_1,\dots,,s_r)\in\C^r$,
  we set
  \[
    \zeta_r^{\shuffle}(\vecs;t) = \zeta_r^{\shuffle}(s_1,\dots,s_r;t)
     := \sum_{0<n_1<\cdots<n_r}
           \frac{ t^{n_r} }
                  { n_1^{s_1}\cdots n_r^{s_r} }.
  \]
\end{defn}

By \cite[Corollary 2.2]{EMT}, 
we see that $\zeta_r^{\shuffle}(\vecs;t)$  is entire on the whole space $\C^r$.
We also denote $\zeta_r^{\shuffle}(\vecs;t)$ as $\zeta_r^{\shuffle}((s_j);t)$.

\begin{rem}\label{rem:shufflezeta=Li at integer points}
By definition, we have
\begin{equation*}\label{eqn:shufflezeta=Li at integer points}
 \zeta_r^{\shuffle}(k_1,\dots,k_r;t)=\Li_{k_1,\dots,k_r}(t),
\end{equation*}
for $k_1,\dots,k_r\in \Z$. It is clear that for $(s_1,\dots,s_r)\in\mathcal{D}$,
\begin{equation*}
  \lim_{t\rightarrow1-0}\zeta^{\shuffle}_r(s_1,\dots,s_r;t) = \zeta_{r}(s_1,\dots,s_r).
\end{equation*}
However, we note that $\lim_{t\rightarrow1-0}\zeta^{\shuffle}_r(s_1,\dots,s_r;t)$ diverges for
$(s_1,\dots,s_r)\in\C^r\setminus\mathcal{D}$.
\end{rem}

We consider the following generating function. 
For $c\in\R$ and $0<t<1$, we put
\begin{equation*}
 \begin{split}
  \tilde{\mathfrak{H}}_r(t_1,\dots,t_r;c;t)
   &:= \prod_{j=1}^{r}
            \left(
               \frac{1}{\frac{1}{t}\exp(\sum_{k=j}^{r}t_k)-1}
                - \frac{c}{\frac{1}{t}\exp(c\sum_{k=j}^{r}t_k)-1}
            \right).
 \end{split}
\end{equation*}

\begin{defn}[cf. {\cite[Definition 3.1]{FKMT1}}]\label{def:definition of deszeta}
  For $0<t<1$, $s_1,\dots,s_r\in\C\setminus\Z$, 
  we define
  \begin{equation}\label{eqn:definition of sup mzf}
      \deszeta_r(s_1,\dots,s_r)(t) 
       := \lim_{ \substack{c \rightarrow 1 \\ c\ne 1} }
               \frac{1}{(1-c)^r}
                C(\vecs)
                  \int_{\mcalc_t^r}
                   \tilde{\mathfrak{H}}_r(t_1,\dots,t_r;c;t)
                    \prod_{k=1}^{r}t_k^{s_{k}-1}dt_k,
  \end{equation}
  where 
  $\mcalc_t$ is a certain Hankel's contour, that is, the path consisting of the positive axis (top side), 
  a circle around the origin of radius $\epsilon_t$ (with $0<\epsilon_t<|\log t|$), 
  and the positive real axis (bottom side). 
  The symbol $C(\vecs)$ is defined in \eqref{eqn: coefficient of contour integration}.
\end{defn}
\begin{rem}
We note that the limit $\lim_{t\rightarrow 1-0}\deszeta_r(s_1,\dots,s_r)(t)$ does not always exist.
For instance, consider the case $r=1$:
we have 
\begin{equation*}
  \deszeta_1(s)(t) 
   = (1-s)\zeta^{\shuffle}(s;t). 
\end{equation*}
Thus, $\lim_{t\rightarrow 1-0}\deszeta_r(s)(t)$ diverges for $s\in\C\setminus\mathcal{D}$.
\end{rem}

The following two propositions are analogs of 
Proposition \ref{prop:deszeta can be written as sum of mzfs} and 
\ref{prop:recurrence relation of deszeta}, and can be proved exactly in the same way.

\begin{prop}\label{prop:deszeta can be written as sum of shuffle-mzfs}
  For $s_1,\dots,s_r\in\C$, and $0<t< 1$, we have the following equality 
  between meromorphic functions of the complex variables $(s_1,\dots,s_r)$;
  \[
    \deszeta_r(s_1,\dots,s_r)(t) 
     = \sum_{ \substack{\vecl=(l_j)\in\N_0^r \\ \vecm(m_j)\in\Z^r \\ |\vecm|=0} }
          \almr 
           \left( \prod_{j=1}^{r} (s_j)_{l_j}\right)
            \zeta^\shuffle_r(s_1+m_1,\dots,s_r+m_r;t),
  \]
  where the coefficient $\almr$ is defined in \eqref{eqn:the seq of coeff of deszeta}, and the symbol $(s)_k$ is the Pochhammer symbol (see Proposition \ref{prop:deszeta can be written as sum of mzfs} for the definition).
\end{prop}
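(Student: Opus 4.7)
The plan is to mirror the proof of Proposition~\ref{prop:deszeta can be written as sum of mzfs} (i.e.\ \cite[Theorem~3.8]{FKMT1}) verbatim, with the single substitution $\exp(\sum_{k=j}^{r}t_k)\mapsto\frac{1}{t}\exp(\sum_{k=j}^{r}t_k)$ throughout. The point is that this substitution converts the Hankel-contour integral representation of $\zeta_r(\vecs)$ used in loc.\,cit.\ into the analogous representation of $\zeta_r^\shuffle(\vecs;t)$, while leaving the combinatorial/algebraic expansion via $\mcalg_r$ untouched.

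More concretely, the argument splits into the following three steps. First, I would establish the Hankel integral representation
\[
\zeta_r^\shuffle(s_1,\dots,s_r;t)
 = C(\vecs)\int_{\mcalc_t^r}
 \prod_{j=1}^{r}
 \frac{1}{\frac{1}{t}\exp(\sum_{k=j}^{r}t_k)-1}
 \prod_{k=1}^{r}t_k^{s_k-1}\,dt_k,
\]
obtained by inserting the expansion $(\frac{1}{t}e^X-1)^{-1}=\sum_{n\geq 1}t^n e^{-nX}$ into each factor of the integrand and integrating term by term. This is the only place where the constraint $\epsilon_t<|\log t|$ on the radius of $\mcalc_t$ enters: it guarantees that the denominators, whose zeros lie in $\log t+2\pi i\Z$, never vanish along $\mcalc_t^r$, so the geometric series converge uniformly on compacta.

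Second, one observes that the integrand of \eqref{eqn:definition of sup mzf}, divided by $(1-c)^r$ and in the limit $c\to 1$, equals a specialization of the generating function $\mcalg_r(u_1,\dots,u_r;v_1,\dots,v_r)$ from \eqref{eqn:def of mathcalGr}. This is a purely algebraic identity in the formal variables $t_1,\dots,t_r$ and does not involve $t$, so it is identical to the corresponding identity in \cite[Theorem~3.8]{FKMT1}. Third, expanding via \eqref{eqn:the seq of coeff of deszeta} to read off the coefficients $\almr$, one integrates each resulting monomial $\prod_j t_j^{s_j+m_j-1}$ against the Pochhammer-style prefactors $\prod_j(s_j)_{l_j}$ on $\mcalc_t^r$ and applies the representation from Step~one with $\vecs$ shifted by $\vecm$, yielding the claimed identity.

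The main technical obstacle is the justification of interchanging the limit $c\to 1$, the geometric-series expansions, and the contour integrations. This is handled exactly as in loc.\,cit.: analyticity in $c$ near $1$ together with uniform convergence of the series on compact subsets of $\mcalc_t^r$ (granted by $\epsilon_t<|\log t|$, which provides a uniform bound $|te^{-X}|<1$ along the contour) permits all the exchanges. No analytic input beyond that of \cite[Theorem~3.8]{FKMT1} is needed.
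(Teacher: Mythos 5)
Your proposal is correct and coincides with the paper's intended argument: the paper gives no separate proof, stating only that Proposition \ref{prop:deszeta can be written as sum of shuffle-mzfs} ``can be proved exactly in the same way'' as Proposition \ref{prop:deszeta can be written as sum of mzfs} (i.e.\ \cite[Theorem 3.8]{FKMT1}), which is precisely your substitution $\exp(\sum_{k=j}^{r}t_k)\mapsto\frac{1}{t}\exp(\sum_{k=j}^{r}t_k)$ carried through the Hankel-contour and $\mcalg_r$-expansion machinery. Your added remarks on the contour radius $\epsilon_t<|\log t|$ (keeping the zeros $\log t+2\pi i\Z$ of the denominators away from the contour and making the geometric series converge) are exactly the point where the $t$-deformation needs checking, so the proposal is sound.
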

\begin{prop}\label{prop:recurrence relation of t-deszeta}
  For $s_1,\dots,s_{r-1}\in\C$, $k\in\N_0$ and $0<t<1$, we have
  \begin{equation*}
    \deszeta_r(s_1,\dots,s_{r-1},-k)(t) 
      =  \sum_{i=0}^k
             \binom{k}{i}
               \deszeta_{r-1}(s_1,\dots,s_{r-1}-k+i)(t)
                 \deszeta_1(-i)(t).
  \end{equation*}
\end{prop}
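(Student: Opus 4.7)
The plan is to follow the proof of the $t=1$ case, Proposition \ref{prop:recurrence relation of deszeta} (established in \cite[Proposition 4.8]{Komi2}), essentially verbatim. Since $\tilde{\mathfrak{H}}_r(\cdots;c;t)$ differs from $\tilde{\mathfrak{H}}_r(\cdots;c)$ only by the cosmetic replacement $\exp(\cdot)\mapsto\tfrac{1}{t}\exp(\cdot)$ and the Hankel contour $\mcalc_t$ is topologically the same as $\mcalc$, none of the algebraic manipulations or residue computations in the $t=1$ case are affected.

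The first step is to observe the factorization
\[
\tilde{\mathfrak{H}}_r(t_1,\dots,t_r;c;t) = \tilde{\mathfrak{H}}_{r-1}(t_1,\dots,t_{r-2},t_{r-1}+t_r;c;t)\cdot \tilde{\mathfrak{H}}_1(t_r;c;t),
\]
immediate from the definition, since for $j\le r-1$ one has $\sum_{\ell=j}^{r}t_\ell = t_r + \sum_{\ell=j}^{r-1}t_\ell$ (which identifies the product of the first $r-1$ factors with $\tilde{\mathfrak{H}}_{r-1}$ in variables $(t_1,\dots,t_{r-2},t_{r-1}+t_r)$) and the $j=r$ factor involves only $t_r$. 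I would then Taylor-expand the $(r-1)$-fold factor about $t_r=0$:
\[
\tilde{\mathfrak{H}}_{r-1}(t_1,\dots,t_{r-2},t_{r-1}+t_r;c;t) = \sum_{n\ge 0}\frac{t_r^n}{n!}\,\partial_{t_{r-1}}^n\tilde{\mathfrak{H}}_{r-1}(t_1,\dots,t_{r-1};c;t),
\]
and substitute into \eqref{eqn:definition of sup mzf} at $\vecs=(s_1,\dots,s_{r-1},-k)$. Interchanging sum and integral decomposes the $r$-fold integral into $\sum_{n\ge 0}\tfrac{1}{n!}\cdot I_n^{r-1}\cdot J_n$, where $I_n^{r-1}$ is an $(r-1)$-fold integral in $t_1,\dots,t_{r-1}$ involving $\partial_{t_{r-1}}^n\tilde{\mathfrak{H}}_{r-1}$, and $J_n$ is the one-fold integral of $\tilde{\mathfrak{H}}_1(t_r;c;t)\,t_r^{n-k-1}$ over $\mcalc_t$.

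For $I_n^{r-1}$, I would apply $n$-fold integration by parts in $t_{r-1}$ (boundary contributions vanish on the Hankel contour), moving $\partial_{t_{r-1}}^n$ onto $t_{r-1}^{s_{r-1}-1}$ to produce $(-1)^n(s_{r-1}-1)\cdots(s_{r-1}-n)$ times an integral against $t_{r-1}^{s_{r-1}-n-1}$; absorbing this polynomial into the shift $C(s_{r-1})\to C(s_{r-1}-n)$ via $\Gamma(s_{r-1})/\Gamma(s_{r-1}-n)=(s_{r-1}-1)\cdots(s_{r-1}-n)$ realizes $I_n^{r-1}$ as $(-1)^n(1-c)^{r-1}\deszeta_{r-1}(s_1,\dots,s_{r-1}-n)(t)$ in the $c\to 1$ limit. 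For $J_n$, adjusting $C(-k)$ into $C(n-k)$ realizes it as $(1-c)\deszeta_1(n-k)(t)$ times the ratio $C(-k)/C(n-k)$; a direct computation using the simple-zero structure of $e^{2\pi is}-1$ and $1/\Gamma(s)$ at $s\in\Z_{\le 0}$ gives $C(-k)/C(n-k)=(-1)^n k!/(k-n)!$ for $0\le n\le k$ and $0$ for $n>k$ (which truncates the sum). Collecting the factors $\tfrac{1}{n!}$, $(-1)^n$ from the integration by parts, and $(-1)^n k!/(k-n)!$ yields exactly $\binom{k}{n}$, and the substitution $i=k-n$ recovers the statement.

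The main obstacle is the bookkeeping of the singular factors: $\Gamma(-k)$, $e^{-2\pi i k}-1$, and $(1-c)^{-r}$ are each individually divergent at $s_r=-k$ or in the $c\to 1$ limit, but combine into finite quantities. This is precisely the analysis carried out in \cite{Komi2}, and the $t$-dependence introduces no new subtleties since it enters only multiplicatively inside the exponentials of $\tilde{\mathfrak{H}}_r$.
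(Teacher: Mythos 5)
Your proposal is correct and is essentially the proof the paper intends: the paper gives no argument for this proposition beyond asserting that it ``can be proved exactly in the same way'' as the $t=1$ case (Proposition \ref{prop:recurrence relation of deszeta}, cited from \cite{Komi2}), and your factorization $\tilde{\mathfrak{H}}_r(t_1,\dots,t_r;c;t)=\tilde{\mathfrak{H}}_{r-1}(t_1,\dots,t_{r-2},t_{r-1}+t_r;c;t)\,\tilde{\mathfrak{H}}_1(t_r;c;t)$, the Taylor expansion in $t_r$, the $n$-fold integration by parts absorbing $(s_{r-1}-1)\cdots(s_{r-1}-n)$ into $C(s_{r-1}-n)$ (legitimate since $e^{2\pi i(s_{r-1}-n)}=e^{2\pi i s_{r-1}}$), and the bookkeeping $\frac{1}{n!}\cdot(-1)^n\cdot(-1)^n\frac{k!}{(k-n)!}=\binom{k}{n}$ carry out exactly that adaptation. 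The one step stated too loosely is the interchange of the Taylor series with the integral over the unbounded contour $\mcalc_t$ (the series has finite radius of convergence in $t_r$); the standard repair is to note first that the exponent $-k-1$ is an integer, so the upper and lower edges of the $t_r$-contour cancel and the $t_r$-integral collapses to the circle $|t_r|=\epsilon_t$, on which the expansion converges uniformly for suitably chosen radii, after which your computation, including the vanishing of the $t_r$-integral for $n>k$ that truncates the sum, goes through as written.
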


\begin{defn}\label{def:definition of Mq}
We define\footnote{The algebra $\mathcal P_{(0,1)}$ is defined in \eqref{eqn:def of P(0,1)}.} the family $\{Z_q(\veck)\}_{q\geq0,r\geq1,\veck\in\Z^r}$ in $\mathcal P_{(0,1)}$ by, for $r\geq1$, $\veck\in\Z^r$,
$$
Z_0(\veck)
:=Z_0(\veck;t)
:=\deszeta_r(\veck)(t),
$$
and for $q\geq1$, $r\geq1$ and $\veck\in\Z^r$,
\begin{equation}\label{eqn:definition of Mq}
Z_q(\veck)
:=Z_q(\veck;t)
:=\sum_{i+j=q}(-1)^j\binom{q}{i}D^i\left[ Z_0(\veck^{(j)}) \right].
\end{equation}
\end{defn}
By Remark \ref{rem:shufflezeta=Li at integer points} and Proposition \ref{prop:deszeta can be written as sum of shuffle-mzfs}, we have
\begin{align}\label{eqn:definition of M0}
Z_0(\veck)
:=Z_0(\veck;t)
&:= \sum_{ \substack{\vecl=(l_j)\in\N_0^r \\ \vecm(m_j)\in\Z^r \\ |\vecm|=0} }
         \almr 
          \left( \prod_{j=1}^{r} (k_j)_{l_j}\right)
           \Li_{\veck+\vecm}(t),
\end{align}
so the above definition is well-defined as elements in $\mathcal P_{(0,1)}$.
In Appendix \ref{sec:Explicit formula of Mq(veck)}, we give explicit formulae of $Z_q(\veck)$ in terms of $\Li_{\veck+\vecm}(t)$ for $\vecm\in\Z^r$.

The following recurrence formula holds for the above element $Z_q(\veck)$.
\begin{prop}\label{prop:recurrence relation of Mq}
For $q\geq1$, $r\geq1$ and $\veck\in\Z^r$, we have
$$
Z_q(\veck)
=D\left[ Z_{q-1}(\veck) \right] - Z_{q-1}(\veck').
$$
\end{prop}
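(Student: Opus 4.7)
The plan is to verify the claimed recurrence by unwinding the definition \eqref{eqn:definition of Mq} on the right-hand side, reindexing the resulting sums, and finally invoking Pascal's identity. The key observation needed to align the two pieces of the right-hand side is the identity $(\veck')^{(j)} = \veck^{(j+1)}$, which follows immediately from the definition $\veck^{(j)}=(k_1,\dots,k_{r-1},k_r-j)$.

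Concretely, I would first expand
\[
D\bigl[Z_{q-1}(\veck)\bigr]
=\sum_{i+j=q-1}(-1)^j\binom{q-1}{i}D^{i+1}\bigl[Z_0(\veck^{(j)})\bigr]
=\sum_{\substack{i+j=q\\ i\ge 1}}(-1)^j\binom{q-1}{i-1}D^{i}\bigl[Z_0(\veck^{(j)})\bigr],
\]
after substituting $i\mapsto i-1$. In parallel, using $(\veck')^{(j)}=\veck^{(j+1)}$,
\[
Z_{q-1}(\veck')
=\sum_{i+j=q-1}(-1)^j\binom{q-1}{i}D^{i}\bigl[Z_0(\veck^{(j+1)})\bigr]
=\sum_{\substack{i+j=q\\ j\ge 1}}(-1)^{j-1}\binom{q-1}{i}D^{i}\bigl[Z_0(\veck^{(j)})\bigr]
\]
after the shift $j\mapsto j-1$. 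Subtracting the second from the first gives a single sum over $i+j=q$, with coefficient $(-1)^j\binom{q-1}{i-1}$ when only the first sum contributes (the case $j=0$, $i=q$), coefficient $(-1)^j\binom{q-1}{i}$ when only the second contributes (the case $i=0$, $j=q$), and coefficient $(-1)^j\bigl[\binom{q-1}{i-1}+\binom{q-1}{i}\bigr]$ for $1\le i,j\le q-1$.

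Applying Pascal's identity $\binom{q-1}{i-1}+\binom{q-1}{i}=\binom{q}{i}$ in the middle range, and noting $\binom{q-1}{q-1}=\binom{q}{q}$ and $\binom{q-1}{0}=\binom{q}{0}$ for the two boundary cases, one collects all three contributions into the single expression
\[
\sum_{i+j=q}(-1)^j\binom{q}{i}D^{i}\bigl[Z_0(\veck^{(j)})\bigr],
\]
which is by definition $Z_q(\veck)$. There is no real obstacle: the argument is a bookkeeping exercise, and the only thing to be careful about is the sign shift that arises from $(-1)^{j-1}=-(-1)^j$ when reindexing $Z_{q-1}(\veck')$, which is precisely what converts the minus sign in $D[Z_{q-1}(\veck)]-Z_{q-1}(\veck')$ into the additive form of Pascal's identity.
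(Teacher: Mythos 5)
Your proof is correct and is essentially the paper's own argument run in the opposite direction: the paper starts from $Z_q(\veck)$ and splits the sum using Lemma \ref{lem:recurrence formula of the sum of binomial coeff.} (which is exactly the Pascal identity $\binom{q}{i}=\binom{q-1}{i-1}+\binom{q-1}{i}$ you invoke), then identifies the two pieces as $D[Z_{q-1}(\veck)]$ and $-Z_{q-1}(\veck')$ via the same observation $(\veck')^{(j)}=\veck^{(j+1)}$. The reindexing, sign bookkeeping, and boundary cases you spell out are all implicitly contained in that lemma, so there is no substantive difference between the two arguments.
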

\begin{proof}
By using the following Lemma \ref{lem:recurrence formula of the sum of binomial coeff.} for $f(i,j)=(-1)^jD^i\left[ Z_0(\veck^{(j)}) \right]$, we have
\begin{align*}
&Z_q(\veck) \\ 
&=\sum_{i+j=q-1}(-1)^j\binom{q-1}{i}D^{i+1}\left[ Z_0(\veck^{(j)}) \right]
- \sum_{i+j=q-1}(-1)^j\binom{q-1}{i}D^i\left[ Z_0(\veck^{(j+1)}) \right] \\
&=D\left[\sum_{i+j=q-1}(-1)^j\binom{q-1}{i}D^{i}\left[ Z_0(\veck^{(j)}) \right] \right] 
- \sum_{i+j=q-1}(-1)^j\binom{q-1}{i}D^i\left[ Z_0\left( (\veck')^{(j)} \right) \right] \\
&=D\left[ Z_{q-1}(\veck) \right] - Z_{q-1}(\veck').
\end{align*}
Hence, we obtain the claim.
\end{proof}

\begin{lem}\label{lem:recurrence formula of the sum of binomial coeff.} 
Let $f$ be a map on $\N_0^2$.
For $q\geq1$, we have
\begin{align*}
\sum_{i+j=q}\binom{q}{i}f(i,j)
=\sum_{i+j=q-1}\binom{q-1}{i}f(i+1,j) + \sum_{i+j=q-1}\binom{q-1}{i}f(i,j+1).
\end{align*}
\end{lem}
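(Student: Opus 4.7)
The plan is to recognize the identity as a direct consequence of Pascal's rule $\binom{q}{i}=\binom{q-1}{i-1}+\binom{q-1}{i}$, applied termwise to the left-hand side, followed by a reindexing of each of the two resulting sums.

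More precisely, I would first write
\[
\sum_{i+j=q}\binom{q}{i}f(i,j)
=\sum_{i+j=q}\binom{q-1}{i-1}f(i,j)
+\sum_{i+j=q}\binom{q-1}{i}f(i,j),
\]
using the convention $\binom{q-1}{-1}=0$ and $\binom{q-1}{q}=0$ (the latter valid since $q\ge 1$), which ensures the boundary terms $i=0$ and $i=q$ (equivalently $j=0$) vanish harmlessly. Then, in the first sum I would substitute $i'=i-1$ (so $i'+j=q-1$), which turns it into $\sum_{i'+j=q-1}\binom{q-1}{i'}f(i'+1,j)$; in the second sum I would substitute $j'=j-1$ (so $i+j'=q-1$), which turns it into $\sum_{i+j'=q-1}\binom{q-1}{i}f(i,j'+1)$. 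Renaming the dummy indices back to $i,j$ yields exactly the right-hand side.

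There is essentially no obstacle here: the identity is purely combinatorial and the only subtlety is making sure that the reindexed summation ranges are consistent with the vanishing binomial conventions, which is automatic for $q\ge 1$. Consequently the proof is a two-line manipulation and requires no induction or generating-function argument.
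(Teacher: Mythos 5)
Your proposal is correct and follows essentially the same route as the paper: the paper's proof also reduces the identity to Pascal's rule $\binom{q}{i}=\binom{q-1}{i-1}+\binom{q-1}{i}$ together with the boundary values $\binom{q}{0}=\binom{q}{q}=1$, and your version merely spells out the reindexing and vanishing-binomial conventions more explicitly.
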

\begin{proof}
Because we have the recurrence relation of binomial coefficients
$$
\binom{q}{i}=\binom{q-1}{i-1} + \binom{q-1}{i}
$$
for $1\leq i\leq q-1$ and we have
$$
\binom{q}{0}=\binom{q}{q}=1,
$$
so we obtain the claim.
\end{proof}

For our simplicity, we sometimes denote
\begin{equation}\label{eqn:definition of M0 by p(veck)}
Z_0(\veck)
=\sum_{\vecm\in\Z^r}p_\vecm(\veck)\Li_{\veck+\vecm}(t).
\end{equation}
Here, $p_\vecm(\veck)$ means $p_\vecm(x_1,\dots,x_r)|_{x_i=k_i}$ where a family $\{p_\vecm(x_1,\dots,x_r)\}_{\vecm\in\Z^r}$ in $\Q[x_1,\dots,x_r]$ with $p_\vecm(x_1,\dots,x_r)=0$ except for a finite number of $\vecm\in\Z^r$.
We denote $\deg_{x_r}(p_\vecm(x_1,\dots,x_r))$ by the degree of the polynomial $p_\vecm(x_1,\dots,x_r)$ in $x_r$, and for $r\geq1$, we put
$$
d_r:=\max\{ \deg_{x_r}(p_\vecm(x_1,\dots,x_r))\ |\ \vecm\in\Z^r \}.
$$
\begin{prop}\label{prop:Mq(k)=0}
For $q\geq d_r+1$ and for $\veck\in\Z^r$, we have
$$
Z_q(\veck)=0.
$$
\end{prop}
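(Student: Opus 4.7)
The plan is to expand $Z_q(\veck)$ as a single $\Q$-linear combination of multiple polylogarithms whose coefficients are visibly $q$-th finite differences of polynomials of degree at most $d_r$, and then invoke the classical vanishing of such finite differences. The key elementary fact I will use is that the operator $D = t\frac{d}{dt}$ lowers the last argument of any MPL: for every $\vecn = (n_1,\ldots,n_r) \in \Z^r$,
\[
D[\Li_\vecn(t)] = \Li_{\vecn - e_r}(t), \qquad e_r := (0,\ldots,0,1).
\]
For $n_r \neq 1$ this is the first case of the differential equation recalled in \S\ref{sec:Definition of hypLiveck(t)}; for $n_r = 1$ it follows from the second case via the identity $\frac{t}{1-t}\Li_{n_1,\ldots,n_{r-1}}(t) = \Li_{n_1,\ldots,n_{r-1},0}(t)$.

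Substituting the representation \eqref{eqn:definition of M0 by p(veck)} of $Z_0$ into the explicit formula \eqref{eqn:definition of Mq} and iterating the rule above, each summand $D^i[Z_0(\veck^{(j)})]$ with $i+j=q$ becomes $\sum_{\vecm} p_\vecm(\veck^{(j)})\,\Li_{\veck+\vecm-qe_r}(t)$. Crucially the resulting MPL index depends only on $\vecm$ and not on $j$ (both $i$ and $j$ contribute only through their sum via $D^i$ and $\veck^{(j)}$), so collecting terms gives
\[
Z_q(\veck) = \sum_{\vecm \in \Z^r} \Bigg( \sum_{j=0}^q (-1)^j \binom{q}{j} p_\vecm(\veck^{(j)}) \Bigg) \Li_{\veck+\vecm-qe_r}(t).
\]

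For each fixed $\vecm$, the map $j \mapsto p_\vecm(k_1,\ldots,k_{r-1},k_r-j)$ is a polynomial in $j$ of degree at most $\deg_{x_r} p_\vecm(x_1,\ldots,x_r) \leq d_r$. The classical identity $\sum_{j=0}^q (-1)^j \binom{q}{j} P(j) = 0$ for every polynomial $P$ with $\deg P < q$ then forces every inner bracket to vanish as soon as $q \geq d_r+1$, yielding $Z_q(\veck) = 0$ directly, with no appeal to linear independence of MPLs needed since each bracket vanishes individually. The only mild obstacle is the uniform treatment of the differential rule at the boundary index $n_r = 1$, but this is exactly what is resolved by the identity stated in the first paragraph; the rest is clean bookkeeping of finite differences.
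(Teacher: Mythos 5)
Your proof is correct, and it uses the same expansion as the paper: substituting \eqref{eqn:definition of M0 by p(veck)} into \eqref{eqn:definition of Mq} and using that $D$ lowers the last index of any MPL by one, so that $Z_q(\veck)=\sum_{\vecm}\bigl(\sum_{j=0}^{q}(-1)^j\binom{q}{j}p_\vecm(\veck^{(j)})\bigr)\Li_{\veck^{(q)}+\vecm}(t)$, which is exactly the paper's $\sum_{\vecm}c_q(\veck;\vecm)\Li_{\veck^{(q)}+\vecm}(t)$ (note $\binom{q}{i}=\binom{q}{j}$ for $i+j=q$). Where you diverge is in how the coefficients are killed: the paper first reduces to the single case $q=d_r+1$ via the recurrence of Proposition \ref{prop:recurrence relation of Mq}, then derives $c_q(x;\vecm)=c_{q-1}(x;\vecm)-c_{q-1}(x_1,\dots,x_{r-1},x_r-1;\vecm)$ and runs an induction on $\deg_{x_r}$ to conclude $c_{d_r+1}=0$ --- in effect re-proving the finite-difference vanishing it needs; you instead quote the classical identity $\sum_{j=0}^{q}(-1)^j\binom{q}{j}P(j)=0$ for $\deg P<q$ and obtain all $q\geq d_r+1$ at once, with no appeal to Proposition \ref{prop:recurrence relation of Mq}. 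Your handling of the boundary case $n_r=1$ of the differential rule, via $\frac{t}{1-t}\Li_{n_1,\dots,n_{r-1}}(t)=\Li_{n_1,\dots,n_{r-1},0}(t)$, is precisely the remark the paper makes after its differential equation, and it is what makes the rule $D[\Li_{\vecn}(t)]=\Li_{\vecn'}(t)$ uniform over $\vecn\in\Z^r$ --- a fact the paper also uses tacitly in writing down its representation of $Z_q$. The trade-off is minor: your argument is shorter and self-contained, while the paper's phrasing keeps the vanishing tied to its recurrence machinery, which it has already established and reuses elsewhere.
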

\begin{proof}
If we have $Z_q(\veck)=0$ for $q=d_r+1$, we inductively get $Z_q(\veck)=0$ for $q>d_r+1$ by Proposition \ref{prop:recurrence relation of Mq}.
So it is sufficient to prove $Z_q(\veck)=0$ for $q=d_r+1$.
By the expression \eqref{eqn:definition of M0 by p(veck)} and the definition \eqref{eqn:definition of Mq}, we have the following representation:
$$
Z_q(\veck)
=\sum_{\vecm\in\Z^r}c_q(\veck;\vecm)\Li_{\veck^{(q)}+\vecm}(t),
$$
where the symbol $c_q(\veck;\vecm)$ means $c_q(x_1,\dots x_r;\vecm)|_{x_i=k_i}$ with
$$
c_q(x_1,\dots x_r;\vecm):=\sum_{i+j=q}(-1)^j\binom{q}{i}p_\vecm(x_1,\dots,x_{r-1},x_r-j) \ \left(\in \Q[x_1,\dots,x_r]\right).
$$
We note that
\begin{equation}\label{eqn:max degree of c0}
\max\{ \deg_{x_r}(c_0(x_1,\dots x_r;\vecm))\ |\ \vecm\in\Z^r \}=d_r.
\end{equation}
By Proposition \ref{prop:recurrence relation of Mq}, we calculate
\begin{align*}
c_q(x_1,\dots x_r;\vecm)|_{x_i=k_i}
&=c_q(\veck;\vecm)
=c_{q-1}(\veck;\vecm) - c_{q-1}(\veck';\vecm) \\
&=\left\{ c_{q-1}(x_1,\dots x_r;\vecm) - c_{q-1}(x_1,\dots x_{r-1},x_r-1;\vecm) \right\}|_{x_i=k_i}
\end{align*}
for $q\geq1$ and for any $k_1,\dots,k_r\in\Z$.
Therefore, we have
\begin{align}\label{eqn:deg of cq=deg of c(q-1) - 1}
&\deg_{x_r}(c_q(x_1,\dots x_r;\vecm)) \\
&=\left\{\begin{array}{ll}
\deg_{x_r}(c_{q-1}(x_1,\dots x_r;\vecm)) - 1 & (\deg_{x_r}(c_{q-1}(x_1,\dots x_r;\vecm))\geq1) ,\\
0 & (\deg_{x_r}(c_{q-1}(x_1,\dots x_r;\vecm))=0).
\end{array}\right. \nonumber
\end{align}
By using \eqref{eqn:max degree of c0} and \eqref{eqn:deg of cq=deg of c(q-1) - 1}, we get
$$
\deg_{x_r}(c_{d_r}(x_1,\dots x_r;\vecm)) = 0
$$
for any $\vecm\in\Z^r$, that is, we have $c_{d_r}(x_1,\dots x_r;\vecm)\in \Q[x_1,\dots,x_{r-1}]$.
Hence, we obtain
$$
c_{d_r+1}(x_1,\dots x_r;\vecm)=0
$$
for any $\vecm\in\Z^r$, so we have $Z_{d_r+1}(\veck)=0$.
\end{proof}

\begin{defn}\label{def:infinite sum M of Mq}
For $\veck\in\Z^r$, we define the element $M(\veck)\in \mathcal A(=\mathcal P_{(0,1)}[ \log t ])$ by
\begin{equation}\label{eqn:def of M(veck)}
Z(\veck)
:=Z(\veck;t)
:=\sum_{q\geq0}\frac{(-\log t)^q}{q!}Z_q(\veck).
\end{equation}
\end{defn}
\begin{rem}
By Proposition \ref{prop:Mq(k)=0}, the right-hand side of the equation \eqref{eqn:def of M(veck)} is a finite sum, so the above definition is well-defined as the element in $\mathcal A$.
\end{rem}

\begin{exa}\label{ex: example of M in one variable}
  We consider the case $r=1$.
  By the definition \eqref{eqn:definition of M0}, 
  $Z_0(k)$ is presented as follows:
  \[
      Z_0(k) = (1-k)\Li_k(t).
  \]
  $Z_1(k)$ is calculated to be
  \[
      Z_1(k) = -(\log t)\, \Li_{k-1}(t).
  \]
  By Proposition \ref{prop:Mq(k)=0}, we know $Z_q(k)=0$ for $q\ge2$.
  Thus, we obtain 
  \begin{equation}\label{eqn: explicit formula of M(k;t)}
      Z(k) = Z_0(k) - Z_1(k) = (1-k)\Li_k(t) + (\log t)\Li_{k-1}(t)
  \end{equation}
  for $k\in\Z$. 
  \end{exa}

We next show that $Z(\veck)$ satisfies the differential equation which holds for MPLs and 
$\desLi(\veck)(t)$ in Definition \ref{def:definition of L}.
\begin{thm}\label{thm:differential equation of M}
For $\veck\in\Z^r$, we have
$$
D\left[ Z(\veck) \right]=Z(\veck').
$$
\end{thm}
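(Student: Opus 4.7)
The plan is a direct computation based on the product rule for the operator $D = t\frac{d}{dt}$ and the recurrence already proved in Proposition \ref{prop:recurrence relation of Mq}. Since $Z(\veck)$ is a finite sum (by Proposition \ref{prop:Mq(k)=0}), there are no convergence issues, so differentiation commutes term by term.

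First I would compute $D[(-\log t)^q]$. Since $t\frac{d}{dt}\log t = 1$, we have $D[(-\log t)^q] = -q(-\log t)^{q-1}$. Applying the Leibniz rule to each term of the finite sum defining $Z(\veck)$ gives
\[
D[Z(\veck)] = -\sum_{q\geq 1}\frac{(-\log t)^{q-1}}{(q-1)!}Z_q(\veck) + \sum_{q\geq 0}\frac{(-\log t)^q}{q!}D[Z_q(\veck)].
\]
Shifting the index in the first sum ($p = q-1$) and combining,
\[
D[Z(\veck)] = \sum_{q\geq 0}\frac{(-\log t)^q}{q!}\bigl(D[Z_q(\veck)] - Z_{q+1}(\veck)\bigr).
\]

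Next I would apply Proposition \ref{prop:recurrence relation of Mq} with $q$ replaced by $q+1$, which gives $Z_{q+1}(\veck) = D[Z_q(\veck)] - Z_q(\veck')$, hence $D[Z_q(\veck)] - Z_{q+1}(\veck) = Z_q(\veck')$. Substituting yields
\[
D[Z(\veck)] = \sum_{q\geq 0}\frac{(-\log t)^q}{q!}Z_q(\veck') = Z(\veck'),
\]
which is the claim.

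There is essentially no obstacle here: the entire proof is a telescoping that makes the recurrence for $Z_q$ precisely match the derivative of the weighting $(-\log t)^q/q!$. The only point to be mindful of is justifying the termwise differentiation and re-indexing, but this is immediate since $Z(\veck)$ is a finite sum of elements of $\mathcal{A}$ by Proposition \ref{prop:Mq(k)=0}.
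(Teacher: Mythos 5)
Your proof is correct. It is organized differently from the paper's: the paper does not invoke Proposition \ref{prop:recurrence relation of Mq} in its proof of this theorem, but instead expands each $Z_q(\veck)$ back into its defining binomial sum (Definition \ref{def:definition of Mq}), applies the Leibniz rule there, and cancels all terms by hand using Lemma \ref{lem:recurrence formula of the sum of binomial coeff.}, which results in a lengthy calculation. You instead use the already-established recurrence $Z_q(\veck)=D\left[Z_{q-1}(\veck)\right]-Z_{q-1}(\veck')$ as a black box, so that after differentiating the weights $(-\log t)^q/q!$ (correctly computing $D[(-\log t)^q]=-q(-\log t)^{q-1}$) and shifting the index, the identity $D\left[Z_q(\veck)\right]-Z_{q+1}(\veck)=Z_q(\veck')$ finishes the argument in a few lines. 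The underlying mechanism is the same telescoping in both cases, since Proposition \ref{prop:recurrence relation of Mq} is itself proved with that same binomial lemma, but your factorization through the recurrence is shorter and cleaner, while the paper's version is self-contained at the level of the definition of $Z_q$. Your remark that termwise differentiation and re-indexing are harmless because the sum is finite (Proposition \ref{prop:Mq(k)=0}) addresses the only point that needs justification.
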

\begin{proof}
We have
\begin{align*}
&D\left[ Z(\veck) \right]-Z(\veck') 
=D\left[ \sum_{q\geq0}\frac{(-\log t)^q}{q!}Z_q(\veck) \right]
- \sum_{q\geq0}\frac{(-\log t)^q}{q!}Z_q(\veck') \\
&=D\left[Z_0(\veck) \right] - Z_0(\veck')
+D\left[ \sum_{q\geq1}\frac{(-\log t)^q}{q!}Z_q(\veck) \right]
- \sum_{q\geq1}\frac{(-\log t)^q}{q!}Z_q(\veck').
\intertext{By Definition \ref{def:definition of Mq}, we get}
&=D\left[Z_0(\veck) \right] - Z_0(\veck')
+D\left[ \sum_{q\geq1}\frac{(-\log t)^q}{q!}\sum_{i+j=q}(-1)^j\binom{q}{i}D^i\left[ Z_0(\veck^{(j)}) \right] \right] \\
&\quad - \sum_{q\geq1}\frac{(-\log t)^q}{q!}\sum_{i+j=q}(-1)^j\binom{q}{i}D^i\left[ Z_0(\veck^{(j+1)}) \right]. \\
\intertext{By applying the Leibniz rule to the third term, we calculate}
&=D\left[Z_0(\veck) \right] - Z_0(\veck')
- \sum_{q\geq1}\frac{(-\log t)^{q-1}}{(q-1)!}\sum_{i+j=q}(-1)^j\binom{q}{i}D^i\left[ Z_0(\veck^{(j)}) \right]  \\
&\quad + \sum_{q\geq1}\frac{(-\log t)^q}{q!}\sum_{i+j=q}(-1)^j\binom{q}{i}D^{i+1}\left[ Z_0(\veck^{(j)}) \right] \\
&\qquad + \sum_{q\geq1}\frac{(-\log t)^q}{q!}\sum_{i+j=q}(-1)^{j+1}\binom{q}{i}D^i\left[ Z_0(\veck^{(j+1)}) \right].
\intertext{By applying Lemma \ref{lem:recurrence formula of the sum of binomial coeff.} for $f(i,j)=(-1)^jD^i\left[ Z_0(\veck^{(j)}) \right]$ to the fourth and the fifth terms, we get}
&=D\left[Z_0(\veck) \right] - Z_0(\veck')
- \sum_{q\geq1}\frac{(-1)^{q-1}}{(q-1)!}(\log t)^{q-1}\sum_{i+j=q}(-1)^j\binom{q}{i}D^i\left[ Z_0(\veck^{(j)}) \right]  \\
&\quad + \sum_{q\geq1}\frac{(-1)^q}{q!}(\log t)^q\sum_{i+j=q+1}(-1)^j\binom{q+1}{i}D^{i}\left[ Z_0(\veck^{(j)}) \right].
\intertext{By rearranging the first, second, and fourth terms, and by changing variables of summation of the third term from $q\geq1$ to $q\geq0$, we obtain}
&=- \sum_{q\geq0}\frac{(-1)^{q}}{q!}(\log t)^{q}\sum_{i+j=q+1}(-1)^j\binom{q+1}{i}D^i\left[ Z_0(\veck^{(j)}) \right]  \\
&\quad + \sum_{q\geq0}\frac{(-1)^q}{q!}(\log t)^q\sum_{i+j=q+1}(-1)^j\binom{q+1}{i}D^{i}\left[ Z_0(\veck^{(j)}) \right]
=0.
\end{align*}
Hence, we finish the proof.
\end{proof}

\subsection{On the equality of $Z(\veck;t)$ and $\desLi(\veck)(t)$}
\label{subsec: On the equality of Z(veck;t) and desLi(veck)(t)}
In this subsection, we show that $\desLi(\veck)(t)$ can be expressed as a certain `` linear'' combinations of MPLs (Theorem \ref{thm:M(veck)=hypLi}).

By the definition of $Z_0(\veck)$ and Proposition \ref{prop:recurrence relation of t-deszeta}, we have the following equation:
\begin{equation}\label{eqn:recurrence formula of M}
Z_0(\veck,-k_{r+1})
=\sum_{i+j=k_{r+1}}\binom{k_{r+1}}{i} Z_0(-i)Z_0(\veck^{(j)}).
\end{equation}
Here, $r\in\N$, $\veck\in\Z^r$ and $k_{r+1}\in\N_0$.

\begin{prop}\label{prop:M(k,0)=M(0)M(k)}
For $r\in\N$ and $\veck\in\Z^r$, we have
$$
Z(\veck,0) = Z(0)Z(\veck).
$$
\end{prop}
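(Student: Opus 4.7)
The plan is to reduce the statement to a coefficient-wise identity in the $\log t$-expansion and then verify that identity by directly rearranging the defining sums. Concretely, I aim to show
\begin{equation*}
Z_q(\veck, 0) = \sum_{p+p'=q}\binom{q}{p}\,Z_p(0)\,Z_{p'}(\veck),\qquad q\ge 0.
\end{equation*}
Once this is in hand, multiplying by $(-\log t)^q/q!$ and summing on $q$ turns the right-hand side into the Cauchy product
\begin{equation*}
\sum_{q\ge 0}\frac{(-\log t)^q}{q!}\sum_{p+p'=q}\binom{q}{p}Z_p(0)Z_{p'}(\veck) = \left(\sum_{p\ge 0}\frac{(-\log t)^p}{p!}Z_p(0)\right)\left(\sum_{p'\ge 0}\frac{(-\log t)^{p'}}{p'!}Z_{p'}(\veck)\right) = Z(0)\,Z(\veck),
\end{equation*}
which is the desired identity. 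All sums terminate by Proposition \ref{prop:Mq(k)=0}, so there are no convergence issues and we stay inside $\mathcal A$ throughout.

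To prove the coefficient identity, I start from the definition $Z_q(\veck, 0) = \sum_{i+j=q}(-1)^j\binom{q}{i}D^i\bigl[Z_0(\veck, -j)\bigr]$ and substitute the recurrence \eqref{eqn:recurrence formula of M}, namely $Z_0(\veck, -j) = \sum_{a+b=j}\binom{j}{a}Z_0(-a)\,Z_0(\veck^{(b)})$. Expanding $D^i$ by the Leibniz rule turns the whole expression into a quadruple sum over indices $(\alpha, \beta, a, b)$ with $\alpha+\beta+a+b = q$. The multinomial identity
\begin{equation*}
\binom{q}{i}\binom{i}{\alpha}\binom{j}{a} = \frac{q!}{\alpha!\,\beta!\,a!\,b!} = \binom{q}{p}\binom{p}{\alpha}\binom{p'}{\beta}
\end{equation*}
(with $p := \alpha+a$ and $p' := \beta+b$) then lets me regroup the sum by $(p, p')$ satisfying $p+p' = q$, and each inner factor matches the defining sum for $Z_p(0)$ and $Z_{p'}(\veck)$ in Definition \ref{def:definition of Mq}. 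This gives the coefficient identity immediately.

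There is no real obstacle beyond careful bookkeeping of the four summation indices; everything else is elementary binomial algebra together with the Leibniz rule. The only nontrivial external input is the recurrence \eqref{eqn:recurrence formula of M}, which is the depth-$0$ analogue of the factorization we are proving. I briefly considered an alternative plan based on the differential recurrence of Proposition \ref{prop:recurrence relation of Mq}, but it would require understanding $Z_{q-1}(\veck, -1)$, which does not factor as cleanly, so the direct reorganization above seems preferable.
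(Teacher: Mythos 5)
Your proposal is correct and follows essentially the same route as the paper's proof: establish the coefficient identity $Z_q(\veck,0)=\sum_{i+j=q}\binom{q}{i}Z_i(0)Z_j(\veck)$ by substituting the recurrence \eqref{eqn:recurrence formula of M} into Definition \ref{def:definition of Mq}, expanding $D^i$ by the Leibniz rule, and regrouping the resulting quadruple sum via the same multinomial rearrangement, then conclude by the Cauchy product in $\log t$. The only difference is notational bookkeeping of the four indices, so no further comment is needed.
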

\begin{proof}
Let $q\geq0$. By definition \eqref{eqn:definition of Mq} of $Z_q(\veck)$ and by the condition \eqref{eqn:recurrence formula of M}, we have
\begin{align*}
&Z_q(\veck,0) 
=\sum_{i+j=q}(-1)^j\binom{q}{i}D^i\left[ Z_0(\veck,-j) \right] \\
&=\sum_{i+j=q}(-1)^j\binom{q}{i}D^i\left[ \sum_{b+d=j}\binom{j}{b} Z_0(-b)Z_0(\veck^{(d)}) \right].
\intertext{By the Leibniz rule, we calculate}
&=\sum_{i+j=q}(-1)^j\binom{q}{i} \sum_{b+d=j}\binom{j}{b}
\left\{
\sum_{a+c=i}\binom{i}{a}
D^a\left[ Z_0(-b) \right]
D^c\left[ Z_0(\veck^{(d)}) \right]
\right\} \\
&=\sum_{a+c+b+d=q}(-1)^{b+d}
\frac{q!}{b!d!a!c!}
D^a\left[ Z_0(-b) \right]
D^c\left[ Z_0(\veck^{(d)}) \right] \\
&=\sum_{i+j=q}\binom{q}{i}
\left\{
\sum_{a+b=i}(-1)^{b}\binom{i}{a}
D^a\left[ Z_0(-b) \right]
\right\}\left\{
\sum_{c+d=j}(-1)^{d}\binom{j}{c}
D^c\left[ Z_0(\veck^{(d)}) \right]
\right\}.
\intertext{By using definition \eqref{eqn:definition of Mq} of $Z_q(\veck)$ again, we get}
&=\sum_{i+j=q}\binom{q}{i} Z_i(0)Z_j(\veck).
\end{align*}
Therefore, we obtain
\begin{align*}
Z(\veck,0)
&=\sum_{q\geq0}\frac{(-\log t)^q}{q!}Z_q(\veck,0)
=\sum_{q\geq0}\frac{(-\log t)^q}{q!} \sum_{i+j=q}\binom{q}{i} Z_i(0)Z_j(\veck) \\
&=\left\{
\sum_{i\geq0}\frac{(-\log t)^i}{i!}Z_i(0)
\right\}\left\{
\sum_{j\geq0}\frac{(-\log t)^j}{j!}Z_j(\veck)
\right\}
=Z(0)Z(\veck).
\end{align*}
Hence, we finish the proof.
\end{proof}

By using the above proposition, we obtain the following.
\begin{thm}\label{thm:M(veck)=hypLi}
For $r\in\N$ and $\veck\in\Z^r$, we have
$$
\desLi(\veck)(t) = Z(\veck).
$$
\end{thm}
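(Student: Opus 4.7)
The plan is to prove $\desLi(\veck)(t)=Z(\veck)$ by induction on the depth $r$, with an inner induction on the last index $k_r$ that propagates out from the anchor $k_r=0$ via the mutually inverse operators $J$ and $D$ of Lemma \ref{lem:operators J,D}. The entire argument is a two-parameter bootstrap from a single base identity, since at every level both sides have already been shown to satisfy exactly the same shift and splitting rules.

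For the base case $r=1$, the anchor $k=0$ is a direct comparison of explicit formulas: Definition \ref{def:definition of L} gives $\desLi(0)(t)=\Li_0(t)+\log t\cdot\Li_{-1}(t)$, while Example \ref{ex: example of M in one variable} specialised to $k=0$ yields the same expression for $Z(0)$. To extend to arbitrary $k\in\Z$, observe that both sides satisfy the same first-order shift relation: $D[\desLi(k)(t)]=\desLi(k-1)(t)$ by Remark \ref{rem:differential equation of L}, and $D[Z(k)]=Z(k-1)$ by Theorem \ref{thm:differential equation of M}. Applying $D$ iteratively descends from $k=0$ to every negative integer; applying $J$, which is a genuine two-sided inverse of $D$ on $\mathcal A$, ascends to every positive integer. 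So the depth-$1$ case is settled on all of $\Z$.

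For the inductive step, fix $r\geq2$ and assume the identity holds on $\Z^{r-1}$. Anchor at $k_r=0$ by writing $\veck=(k_1,\dots,k_{r-1},0)$. Remark \ref{rem:differential equation of L} gives $\desLi(k_1,\dots,k_{r-1},0)(t)=\desLi(0)(t)\,\desLi(k_1,\dots,k_{r-1})(t)$, while Proposition \ref{prop:M(k,0)=M(0)M(k)} gives the parallel identity $Z(k_1,\dots,k_{r-1},0)=Z(0)\,Z(k_1,\dots,k_{r-1})$. Combining the base case $\desLi(0)(t)=Z(0)$ with the outer inductive hypothesis $\desLi(k_1,\dots,k_{r-1})(t)=Z(k_1,\dots,k_{r-1})$ yields the anchor equality. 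I then propagate to every $k_r\in\Z$ exactly as in the depth-$1$ argument, using $D[\desLi(\veck)(t)]=\desLi(\veck')(t)$ from Remark \ref{rem:differential equation of L} together with $D[Z(\veck)]=Z(\veck')$ from Theorem \ref{thm:differential equation of M}, and inverting with $J$ to move upward.

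The only substantive point to watch is that $J$ and $D$ are mutually inverse on $\mathcal A$, so no stray integration constant ever appears when lifting the identity from $\veck'$ to $\veck$ via $J$; this is precisely the content of Lemma \ref{lem:operators J,D}. With that in hand, the whole statement reduces to checking compatibility at the single anchor $\desLi(0)(t)=Z(0)$, and no further computational obstacle is expected.
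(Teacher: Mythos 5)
Your proposal is correct and follows essentially the same route as the paper: induction on the depth $r$, anchoring at the last entry $0$ via the explicit identity $\desLi(0)(t)=Z(0)$, the splitting identities of Remark \ref{rem:differential equation of L} and Proposition \ref{prop:M(k,0)=M(0)M(k)}, and propagation in the last index through the mutually inverse operators $J$ and $D$ (Lemma \ref{lem:operators J,D}, Theorem \ref{thm:differential equation of M}). The paper simply phrases your inner induction on $k_r$ as a single application of $J^{k_r}$ (with $J^{-1}=D$), which is the same argument.
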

\begin{proof}
We prove this claim by induction on $r$.
When $r=1$, by Definition \ref{def:definition of L} and \eqref{eqn: explicit formula of M(k;t)}, we have
\begin{equation}\label{eqn:M(0)=Ldes(0)(t)}
\desLi(0)(t)
=\Li_0(t)+\log t\cdot\Li_{-1}(t)
=Z(0).
\end{equation}
So by Remark \ref{rem:differential equation of L} and Theorem \ref{thm:differential equation of M}, we have
$$
\desLi(k_1)(t)
=J^{k_1}\left[ \desLi(0)(t) \right]
=J^{k_1}\left[ Z(0) \right]
=Z(k_1).
$$
Hence the claim holds for $r=1$.
Assume that the claim holds for $r_0\in\N$.
Put $\veck=(k_1,\dots,k_{r_0+1})$ with $k_1,\dots,k_{r_0+1}\in\Z$.
Similarly to the case of $r=1$, we have
$$
\desLi(\veck)(t)
=J^{k_{r_0+1}}\left[ \desLi(k_1,\dots,k_{r_0},0)(t) \right]
=J^{k_{r_0+1}}\left[ \desLi(0)(t)\desLi(k_1,\dots,k_{r_0})(t) \right].
$$
On the other hand, by using Theorem \ref{thm:differential equation of M} and Proposition \ref{prop:M(k,0)=M(0)M(k)}, we get$$
Z(\veck)
=J^{k_{r_0+1}}\left[ Z(k_1,\dots,k_{r_0},0) \right]
=J^{k_{r_0+1}}\left[ Z(0)Z(k_1,\dots,k_{r_0}) \right].
$$
Hence, by \eqref{eqn:M(0)=Ldes(0)(t)} and the induction hypothesis, we obtain the claim.
\end{proof}

\section{Main results}
\label{sec: Main results}

In this section, we introduce one-parameterized desingularized MZFs 
(Definition \ref{defn: the definition of hatdeszeta}).
After that, we prove that the shuffle type formula holds for special values of desingularized MZFs 
at integer points (Theorem \ref{cor:shuffle product of deszeta}).
We also give some examples.

\subsection{Definition of $\hatdeszeta_r(\vecs;t)$}
\label{sec: Definition of hypzeta(vecs;t)}

We introduce a new function $\hatdeszeta_r(\vecs)(t)$, and we investigate several properties of this function. More precisely, we show that $\hatdeszeta_r(\vecs)(t)$ converges to $\deszeta_r(\vecs)$
under the limit $t\rightarrow1-0$.
\begin{defn}\label{defn: the definition of hatdeszeta}
For $s_1,\dots,s_r\in\C\setminus\Z$, and $0<t<1$, we define
\begin{align}\label{eqn: the definition of hyperposed zeta function}
&\hatdeszeta_r(s_1,\dots,s_r)(t)
:= \prod_{k=1}^{r}
	\frac{1}{(e^{2\pi i s_k}-1)\Gamma(s_k)} \\ 
&\hspace{5mm}\cdot
\int_{\mcalc^r}
\prod_{j=1}^{r}
\left(\frac{1}{\frac{1}{t}\exp(\sum_{k=j}^{r}x_k)-1}
+ \frac{(\log t-\sum_{k=j}^{r}x_k)\frac{1}{t}\exp(\sum_{k=j}^{r}x_k)}{\{\frac{1}{t}\exp(\sum_{k=j}^{r}x_k)-1\}^2}
\right) 
\prod_{k=1}^{r}x_k^{s_{k}-1}dx_k. \nonumber
\end{align}
\end{defn}

We note that the convergence of the right-hand side of \eqref{eqn: the definition of hyperposed zeta function} 
can be justified by the following proposition.

\begin{prop}\label{prop:holomorphic of t-deszeta}
For any $t\in(0,1)$, the function $\hatdeszeta_r(s_1,\dots,s_r)(t)$ can be analytically continued to $\C^{r}$ as an entire function in $(s_1,\dots,s_r)\in\C^{r}$.
Moreover, for any $(s_1,\dots,s_r)\in\C^r$, we have
$$
\lim_{t\rightarrow1-0}\hatdeszeta_r(s_1,\dots,s_r)(t)=\deszeta_r(s_1,\dots,s_r).
$$
\end{prop}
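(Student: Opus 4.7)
The key observation driving the proof is that the integrand of \eqref{eqn: the definition of hyperposed zeta function} has only a removable singularity at $y_j := \sum_{k=j}^{r}x_k = \log t$. Setting $w = y_j - \log t$, so that $t^{-1}\exp(y_j) = e^w$, the $j$-th factor simplifies to
\begin{equation*}
  \frac{1}{e^w - 1} - \frac{w e^w}{(e^w-1)^2} = \frac{d}{dw}\!\left(\frac{w}{e^w-1}\right),
\end{equation*}
which is entire at $w = 0$ and meromorphic on $\C$ with poles only at $w \in 2\pi i\Z_{\neq 0}$. Consequently, with the radius $\epsilon$ of $\mcalc$ chosen small (say $r\epsilon < 2\pi$), the full integrand is holomorphic in $(x_1,\dots,x_r)$ on a neighborhood of $\mcalc^r$ for every $t \in (0,1)$.

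Given this, I would establish the analytic continuation by mimicking the proof of Proposition~\ref{prop:holomolcity and analytic continuation of des mzf}. The above expression is bounded near $w=0$ and decays like $|y_j|e^{-y_j}$ as $y_j \to +\infty$ along the rays of $\mcalc$, so the integral converges absolutely and uniformly on compact subsets of $(s_1,\dots,s_r) \in \C^r$. Since $\prod_k x_k^{s_k-1}$ is entire in each $s_k$ for $x_k$ off the branch cut, standard dominated-convergence estimates allow differentiation under the integral sign, exhibiting $\hatdeszeta_r(\vecs)(t)$ as an entire function of $\vecs$.

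For the limit $t \to 1-0$, my plan is another dominated-convergence argument. The $j$-th factor $\frac{d}{dw}(w/(e^w-1))|_{w = y_j - \log t}$ converges pointwise to $\frac{d}{dw}(w/(e^w-1))|_{w = y_j}$, which is precisely the $j$-th factor of the integrand appearing in Proposition~\ref{prop:holomolcity and analytic continuation of des mzf}. Fixing $\epsilon$ and restricting to $t$ in a small left half-neighborhood of $1$, this convergence is uniform on any compact subset of $\mcalc^r$, and on the rays of $\mcalc$ the tail estimate $\bigl|\frac{d}{dw}(w/(e^w-1))\bigr| = O(|w|e^{-\mathrm{Re}\,w})$ provides a $t$-uniform integrable majorant against $\prod_k |x_k|^{\mathrm{Re}(s_k)-1}$. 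Taking the limit inside the integral and invoking Proposition~\ref{prop:holomolcity and analytic continuation of des mzf} then yields $\lim_{t \to 1-0}\hatdeszeta_r(\vecs)(t) = \deszeta_r(\vecs)$.

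The main obstacle I expect is assembling the $t$-uniform majorant on the full contour $\mcalc^r$: near the circular portion, boundedness is routine once the shifted poles at $y_j = \log t + 2\pi i n$ ($n \neq 0$) sit at positive distance from $\mcalc^r$; on the rays, the exponential decay must be tracked through the product over $j = 1,\dots,r$ and combined with the polynomial weight $\prod_k |x_k|^{\mathrm{Re}(s_k)-1}$ to produce an integrable bound. Carrying out this multi-variable tail analysis cleanly, rather than the underlying algebraic reduction, is the technical heart of the argument.
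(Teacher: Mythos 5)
Your treatment of the integrand and of the limit is essentially the paper's own route: the identity that the $j$-th factor is $\frac{d}{dw}\left(\frac{w}{e^w-1}\right)$ evaluated at $w=\sum_{k\ge j}x_k-\log t$ is exactly the paper's observation (made there via a Taylor expansion at $y=\log t$) that the apparent singularity is removable, and both the holomorphy of the integral in $\vecs$ and the passage $t\rightarrow1-0$ are carried out, as you propose, with a $t$-uniform exponentially decaying majorant on $\mcalc^r$ (Lemmas \ref{lem: preparation1 for the proof of holomorphy of hypzeta} and \ref{lem: preparation2 for the proof of holomorphy of hypzeta}, which give $|F(t;y)|<Ae^{-\re y/2}$ with $A$ independent of $t$) and Lebesgue's dominated convergence theorem. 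So the analytic core of your plan is sound and matches the paper.

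There is, however, a genuine gap in the first assertion. Holomorphy of the contour integral in $(s_1,\dots,s_r)$ does not make $\hatdeszeta_r(\vecs)(t)$ entire, because the definition \eqref{eqn: the definition of hyperposed zeta function} carries the prefactor $\prod_{k}\frac{1}{(e^{2\pi i s_k}-1)\Gamma(s_k)}$: the factor $1/\Gamma(s_k)$ cancels the poles of $1/(e^{2\pi i s_k}-1)$ only at $s_k\in\Z_{\le0}$, so your argument as written yields a meromorphic continuation with possible singularities on the hyperplanes $s_k=l_k\in\N$. The paper closes precisely this point: for $s_k=l_k\in\N$ the factor $x_k^{s_k-1}$ is single-valued, the two rays of the Hankel contour cancel, and the residue theorem shows the integral vanishes, so the simple pole of $1/(e^{2\pi i s_k}-1)$ is cancelled and $\hatdeszeta_r(\vecs)(t)$ is entire. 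You need this (or an equivalent) vanishing argument; without it the claimed entirety is not proved. A smaller related point: since the limit is asserted for \emph{all} $\vecs\in\C^r$, at points with some $s_k\in\N$ both sides are defined only by continuation, so the pointwise dominated-convergence statement should be upgraded to locally uniform convergence in $\vecs$ (your majorant, being independent of $\vecs$ on compacta, gives this at no extra cost) before concluding the limit there.
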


For the proof of Proposition 4.2, 
we prepare two lemmas.
Let $\mcaln(\epsilon):=\{ z\in\C \,|\, |z|<\epsilon \}$ and 
$\mcals(\theta):=\{ z\in\C \,|\, |\arg z|\le \theta \}$.
Then one can easily obtain:
\begin{lem}[{\cite[Lemma 3.5]{FKMT1}}] 
\label{lem: preparation1 for the proof of holomorphy of hypzeta}
  There exist $\epsilon>0$ and $0<\theta<\pi/2$ such that 
  \[
     \sum_{k=j}^{r} x_k \in \mcaln(1)\cup\mcals(\theta) 
  \]
  for any $x_j,\dots,x_r\in\mcalc$ ($1\le j \le r$), 
  where $\mcalc$ is the Hankel contour involving a circle around the origin of radius $\epsilon$.
\end{lem}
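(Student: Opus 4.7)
The plan is to exploit the concrete geometry of the Hankel contour: each $x_k \in \mcalc$ either lies on the small circle of radius $\epsilon$ (so $|x_k| = \epsilon$) or on one of the two positive-real-axis pieces (so $x_k$ is a positive real number with $x_k \ge \epsilon$). First I would partition $\{j, \dots, r\}$ into $S$ (the indices for which $x_k$ is real positive) and $T$ (the indices for which $x_k$ lies on the circle), and decompose
\[
\sum_{k=j}^{r} x_k = U + W, \qquad U := \sum_{k \in S} x_k \in \R_{\ge 0}, \qquad W := \sum_{k \in T} x_k,
\]
with the a priori bound $|W| \le |T|\epsilon \le r\epsilon$.

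Next I would fix $\epsilon$ small enough, say $\epsilon < 1/(4r)$, and split the argument into two cases according to the size of $|U+W|$. In Case A, $|U+W| \le 1$, which places the sum directly in $\mcaln(1)$. In Case B, $|U+W| > 1$, and the reverse triangle inequality gives $U \ge |U+W| - |W| > 1 - r\epsilon \ge 3/4$, which in particular forces $S \ne \emptyset$ and $U$ to be bounded below. Since $U$ is a positive real, one estimates
\[
\re(U+W) \ge U - |W| > \tfrac{1}{2}, \qquad |\mathrm{Im}(U+W)| = |\mathrm{Im}(W)| \le r\epsilon < \tfrac{1}{4}.
\]
Hence $|\arg(U+W)| \le \arctan(1/2) < \pi/2$, so setting $\theta := \arctan(1/2)$ places $U+W$ inside $\mcals(\theta)$. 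Combining the two cases yields the lemma with these values of $\epsilon$ and $\theta$.

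The only real obstacle is bookkeeping the constants: one must choose $\epsilon$ small enough relative to $r$ so that Case B produces a sector angle strictly less than $\pi/2$. There is no analytic subtlety, only the geometric fact that once the sum escapes the unit disk, the dominant positive-real contribution $U$ overwhelms the small perturbation $W$ coming from the circular piece of $\mcalc$.
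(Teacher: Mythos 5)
Your proof is correct and is exactly the elementary geometric argument one expects here; the paper offers no proof of this lemma (it simply cites \cite[Lemma 3.5]{FKMT1}), and your decomposition of $\sum_{k=j}^{r}x_k$ into a nonnegative real contribution $U$ from the axis pieces and a perturbation $W$ with $|W|\le r\epsilon$ from the circular piece is the standard route to it. The only adjustment needed is at the case boundary: since $\mcaln(1)$ is the open disk, split at $|U+W|<1$ versus $|U+W|\ge 1$ rather than at $\le 1$ versus $>1$; in the second case your estimates go through verbatim, because $U\ge |U+W|-|W|\ge 1-r\epsilon>3/4$ still yields $\re(U+W)>1/2$ and $|\mathrm{Im}(U+W)|\le r\epsilon<1/4$, hence $|\arg(U+W)|<\arctan(1/2)<\pi/2$.
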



\begin{lem}[{cf. \cite[Lemma 3.6]{FKMT1}}] 
\label{lem: preparation2 for the proof of holomorphy of hypzeta}
  For any $t\in(0,1)$ and $y\in\mcaln(1)\cup\mcals(\theta)$, 
  there exists a constant $A>0$ independent of $t$ such that 
  \begin{equation*}
    \left| \frac{1}{\frac{1}{t}e^y-1} + \frac{(\log t - y)\frac{1}{t}e^y}{(\frac{1}{t}e^y-1)^2} \right|
    < Ae^{-\Re y/2}.
  \end{equation*}
\end{lem}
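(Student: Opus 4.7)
The plan is to reduce Lemma \ref{lem: preparation2 for the proof of holomorphy of hypzeta} to its $t=1$ analogue, namely Lemma 3.6 of \cite{FKMT1}, via the change of variables
\[
    z := y - \log t.
\]
Since $\log t \in \R_{\le 0}$ for $t \in (0,1)$, this yields $\tfrac{1}{t} e^y = e^{y - \log t} = e^z$ and $\log t - y = -z$, so the quantity inside the absolute value collapses to
\[
    F(z) \;:=\; \frac{1}{e^z - 1} \;-\; \frac{z\, e^z}{(e^z - 1)^2},
\]
which is precisely the integrand appearing in the ``$t=1$'' desingularization used in Proposition \ref{prop:holomolcity and analytic continuation of des mzf}.

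Next, I would check that as $y$ ranges over $\mcaln(1) \cup \mcals(\theta)$ and $t$ over $(0,1)$, the shifted variable $z = y - \log t$ stays in a region where $F$ admits a decay bound $|F(z)| \le A' e^{-\re z/2}$ with $A'$ independent of $t$. For $y \in \mcals(\theta)$ this is immediate because the sector is preserved under adding the nonnegative real number $-\log t$. For $y \in \mcaln(1)$, $z$ lies in the closed disk of radius $1$ centered at $-\log t \ge 0$, which stays at distance at least $2\pi - 1 > 0$ from every pole $2\pi i k$ ($k \ne 0$) of $F$, and $F$ is also holomorphic at the origin. One then splits into two regimes: when $-\log t$ is large, the disk lies deep inside $\mcals(\theta)$ and the exponential decay provided by Lemma 3.6 of \cite{FKMT1} supplies the bound directly; when $-\log t$ is bounded, both $|F(z)|$ and $e^{-\re z / 2}$ are pinched between positive absolute constants on the resulting compact set, so the estimate survives with a (possibly enlarged) uniform constant.

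Finally, because $-\log t \ge 0$, one has $\re z = \re y - \log t \ge \re y$, and therefore $e^{-\re z /2} \le e^{-\re y /2}$; combining this with the previous step yields $|F(z)| \le A\, e^{-\re y / 2}$ with $A$ independent of $t$, as required.

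The main subtlety I anticipate is the uniformity of the constant $A$ in $t$. The two extremes pull in opposite directions: as $t \to 0^+$ the point $z$ is pushed arbitrarily far into the right half-plane, forcing one to use the exponential-decay portion of Lemma 3.6 of \cite{FKMT1}; as $t \to 1^-$ the point $z$ is confined near $y$ and one needs only that $F$ stays bounded on a fixed compact neighbourhood of the origin away from the poles $\{2\pi i k\}_{k\ne 0}$. Patching these two regimes with a single constant is the crux of the argument.
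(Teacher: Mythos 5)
Your proposal is correct, but it proves the lemma by a genuinely different mechanism than the paper. The paper works directly with $F(t;y):=\frac{1}{\frac{1}{t}e^y-1}+\frac{(\log t-y)\frac{1}{t}e^y}{(\frac{1}{t}e^y-1)^2}$: it combines the two terms into one fraction, expands around the only dangerous point $y=\log t$ to see that this singularity is removable (the quotient tends to $-\tfrac12$), deduces a bound $|F(t;y)|<C$ on $\mcaln(1)$, and then on $\mcals(\theta)\setminus\mcaln(1)$ (where $\log t=y$ is impossible) bounds the two summands separately by $A_1e^{-\Re y/2}$ and $A_2e^{-\Re y/2}$, taking $A=A_1+A_2$. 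You instead observe the exact identity obtained from $z=y-\log t$, namely that the quantity equals the fixed $t=1$ integrand $F_1(z)=\frac{1}{e^z-1}-\frac{ze^z}{(e^z-1)^2}$ evaluated at the translate $z=y-\log t$, and then import the known decay of $F_1$ (FKMT Lemma 3.6 / the sector estimate) together with the elementary facts that the sector is stable under adding $-\log t\ge0$, that the translated unit disk stays at distance $\ge 2\pi-1$ from the poles $2\pi ik$ ($k\ne0$), and that $e^{-\Re z/2}\le e^{-\Re y/2}$. What your route buys is transparency of the uniformity in $t$: all estimates concern the single $t$-independent function $F_1$, and $t$ enters only through a nonnegative real translation, handled by the two-regime split in $-\log t$ (large: disk inside $\mcals(\theta)$ once $-\log t\ge 1/\sin\theta$; bounded: a fixed compact set avoiding the poles) — this is precisely the point the paper treats rather tersely when it asserts its constant $C$ on $\mcaln(1)$ without dwelling on its independence of $t$. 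What the paper's route buys is that it is self-contained (no appeal to the $t=1$ lemma) and needs no case distinction on the size of $-\log t$. One small bookkeeping point in your assembly: on the compact regime you should note $\Re y\ge-1$ on $\mcaln(1)$, so the constant bound $|F_1(z)|\le C$ converts to $Ce^{1/2}e^{-\Re y/2}$; with that, patching the regimes by taking the maximum of the finitely many constants is immediate and your argument is complete.
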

\begin{proof}
  We set 
  \[
     F(t;y):=\frac{1}{\frac{1}{t}e^y-1} + \frac{(\log t - y)\frac{1}{t}e^y}{(\frac{1}{t}e^y-1)^2}.
  \]
  We first note that 
  \begin{equation*}\label{eqn: the expansion of the integrand of hypzeta}
    \frac{1}{t}e^y-1 = y-\log t + \frac{1}{2}(y-\log t)^2 + O\big((y-\log t)^2\big), \quad
    (y\rightarrow \log t).
  \end{equation*}
  Thus, we get
  \begin{align*}
    F(t;y) &= \frac{ \frac{1}{t}e^y-1 + (\log t - y)\frac{1}{t}e^y }
                              { (\frac{1}{t}e^y-1)^2 } 
               = \frac{ -\frac{1}{2}(y-\log t)^2 + O\big((y-\log t)^2\big) }
                              { (y-\log t)^2 + O\big((y-\log t)^2\big) }.
  \end{align*}
  If $\log t<-1$, that is, $\log t\not\in \mcaln(1)\cup\mcals(\theta)$, then $F(t;y)$ is holomorphic for all $y\in\mcaln(1)\cup\mcals(\theta)$.
  One can also see that $F(t;y)$ has the limit value when $y\rightarrow \log t$ for $-1\le \log t <0$.
  Hence, there exists $C>0$ such that for any $y\in\mcaln(1)$,  
  \[
       |F(t;y)| < C.
  \]  
  We next consider the case $y\in\mcals(\theta)\setminus\mcaln(1)$. 
  Note that there does not exist $t\in(0,1)$ such that $\log t = y$ in this case.
  Thus, there exists $A_1$, $A_2>0$ such that 
  \begin{align*}
    \left| \frac{1}{ \frac{1}{t}e^y-1 } \right| < A_1e^{-\Re y/2}, \quad
    \left| \frac{(\log t - y)\frac{1}{t}e^y}{(\frac{1}{t}e^y-1)^2} \right| < A_2e^{-\Re y/2}.
  \end{align*}
  Therefore, we obtain the claim by putting $A:=A_1+A_2$.
\end{proof}

\noindent{\it Proof of Proposition \ref{prop:holomorphic of t-deszeta}}.
 We use the notation used in Lemma \ref{lem: preparation1 for the proof of holomorphy of hypzeta} and \ref{lem: preparation2 for the proof of holomorphy of hypzeta}.
  We put 
  \begin{align*}
    G(x_1,\dots,x_r) 
    := A^r \prod_{j=1}^r
                    \exp\left(
                      -\Re\left(
                          \sum_{k=j}^r  t_k/2
                              \right)
                            \right) 
    = A^r \prod_{j=1}^r
                   \exp\left(
                     -\Re\left(t_k \frac{k(k+1)}{4} \right)
                           \right).                   
  \end{align*}
  Then, one can easily show that 
  \begin{align}
    \label{eqn: dominated function of F}
    &\left| \prod_{j=1}^r F\left(t;\sum_{k=j}^r x_k\right) \right| < G(x_1,\dots,x_r), 
                                                                                                               \quad(x_1,\dots,x_r\in\mcalc), \\
    \label{eqn: convergence of dominated function}
    &\int_{\mcalc^r} G(x_1,\dots,x_r) \prod_{k=1}^r|t_k^{s_k-1}dt_k| < \infty.
  \end{align}
  Since the integral on the right-hand side of \eqref{eqn: the definition of hyperposed zeta function}
  is holomorphic for all $s_1,\dots,s_r\in\C$, 
  we see that $\hatdeszeta_r(s_1,\dots,s_r)(t)$ can be meromorphically continued to $\C^r$.
  We also see that its possible singularities are located on hyperplanes $s_k=l_k\in\N$.
  For $s_k=l_k\in\N$, one can show that 
  the integration of \eqref{eqn: the definition of hyperposed zeta function} is zero 
  by using the residue theorem.
  Thus, $\hatdeszeta_r(s_1,\dots,s_r)(t)$ has no singularity on $s_k=l_k$.
  In other words, $\hatdeszeta_r(s_1,\dots,s_r)(t)$ is entire on $\C^r$.
  The latter part of the claim is proved by Lebesgue's convergence theorem, 
  \eqref{eqn: dominated function of F}, and \eqref{eqn: convergence of dominated function}. 
  \hfill$\Box$

\subsection{Shuffle product of desingularized MZFs at integer points}
In this subsection, we show that the shuffle product formula holds for products of special values 
at integer points of desingularized MZFs.
At the end of this section, explicit formulas for the product of depth 1 and depth 1, 
and that of depth 1 and depth 2, are given.

\begin{thm}\label{thm:hypzeta=M(veck)}
For any $r\geq1$, $\veck=(k_1,\dots,k_r)\in\Z^r$ and for $t\in(0,1)$, we have
$$
\hatdeszeta_r(k_1,\dots,k_r)(t)=Z(\veck).
$$
\end{thm}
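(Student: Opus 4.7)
The approach is to compute $\hatdeszeta_r(\vecs)(t)$ directly by series expansion of its Hankel integrand and to match the result with the explicit polynomial $Z(\veck)=\sum_q\frac{(-\log t)^q}{q!}Z_q(\veck)$ of Definition \ref{def:infinite sum M of Mq}. The key simplification is the substitution $u_j:=\frac{1}{t}e^{T_j}$ with $T_j:=t_j+\cdots+t_r$; the identity $\log t-T_j=-\log u_j$ collapses each $\hatdeszeta_r$-integrand factor to $\frac{1}{u_j-1}-\frac{u_j\log u_j}{(u_j-1)^2}$. Applying the geometric expansions $\frac{1}{u_j-1}=\sum_{n\geq1}t^ne^{-nT_j}$ and $\frac{u_j}{(u_j-1)^2}=\sum_{n\geq1}nt^ne^{-nT_j}$, each factor rewrites as
$$\sum_{n_j\geq1}t^{n_j}\bigl(1+n_j(\log t-T_j)\bigr)e^{-n_jT_j}.$$

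Taking the product over $j=1,\dots,r$ and re-collecting $\sum_jn_jT_j=\sum_kN_kt_k$ with $N_k:=n_1+\cdots+n_k$, I would evaluate the Hankel integral over each $t_k$ using the classical formula $\int_\mcalc e^{-Nt}t^{\alpha-1}dt=(e^{2\pi i\alpha}-1)\Gamma(\alpha)N^{-\alpha}$. The prefactor $C(\vecs)$ cancels the $(e^{2\pi i\alpha}-1)\Gamma$-factors, so each monomial arising from the expansion of $\prod_j\bigl(1+n_j(\log t-T_j)\bigr)$ (a finite polynomial in the $t_k$'s and $\log t$, via $T_j=\sum_{k\geq j}t_k$) contributes a product of Pochhammer symbols $(s_k)_{a_k}$ times a shifted MPL $\Li_{\vecs+\vecm}(t)$. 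This yields $\hatdeszeta_r(\vecs)(t)$ as a polynomial in $\log t$ whose coefficients are finite $\Q[\vecs]$-linear combinations of $\Li_{\vecs+\vecm}(t)$. On the other hand, the polynomial-in-$\log t$ structure of $Z(\veck)$ is read off from Proposition \ref{prop:recurrence relation of Mq} applied iteratively to the base case $Z_0(\veck)=\deszeta_r(\veck)(t)=\sum_{\vecl,\vecm}\almr\prod_j(k_j)_{l_j}\Li_{\veck+\vecm}(t)$ supplied by Proposition \ref{prop:deszeta can be written as sum of shuffle-mzfs}. Matching the coefficients of each $(\log t)^q\,\Li_{\veck+\vecm}(t)$ on the two sides completes the proof.

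The $r=1$ case is transparent: the expansion above yields $\hatdeszeta_1(s)(t)=(1-s)\Li_s(t)+(\log t)\Li_{s-1}(t)=Z(s)$, in agreement with Example \ref{ex: example of M in one variable}. The main obstacle for $r\geq2$ is the combinatorial matching in the final step: for each $\vec n$ the expansion of $\prod_j\bigl(1+n_j(\log t-T_j)\bigr)$ produces $3^r$ monomials (owing to the nonuniform distribution of $t_k$-contributions through $T_j=\sum_{k\geq j}t_k$), and organizing them into the coefficients $\almr$ coming from the generating function $\mathcal G_r$ of \eqref{eqn:def of mathcalGr} requires a careful bijective analysis. If the direct matching proves unwieldy, I would alternatively proceed by induction on $r$: establish a factorization $\hatdeszeta_r(\veck,0)(t)=\hatdeszeta_1(0)(t)\cdot\hatdeszeta_{r-1}(\veck)(t)$ paralleling Proposition \ref{prop:M(k,0)=M(0)M(k)} (via residue analysis of the $t_r$-integral), combined with a differential recurrence $D[\hatdeszeta_r(\veck)(t)]=\hatdeszeta_r(\veck')(t)$ mirroring Theorem \ref{thm:differential equation of M} (derivable from the integral representation by computing $t\partial_t$ of the integrand and integration by parts on the Hankel contour).
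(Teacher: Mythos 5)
Your overall strategy is the same as the paper's: expand the integrand of \eqref{eqn: the definition of hyperposed zeta function} by geometric series, integrate term by term against $\prod_k x_k^{s_k-1}dx_k$ so that the $\Gamma$-factors cancel against $C(\vecs)$, and compare the resulting polynomial in $\log t$ with $Z(\veck)=\sum_q\frac{(-\log t)^q}{q!}Z_q(\veck)$ (your $r=1$ computation is correct and matches \eqref{eqn: explicit formula of M(k;t)}). However, the step you defer --- ``organizing the $3^r$ monomials into the coefficients coming from $\mathcal G_r$ requires a careful bijective analysis'' --- is precisely the substance of the paper's proof, and leaving it open is a genuine gap rather than a routine verification. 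In the paper this matching is done by introducing the generating function $\mathcal H((u_j);(v_j))$ indexed by the subsets $J$ (recording which factors contribute $\log t$), $K$ (which contribute $-\sum_{k\geq j}x_k$), and $I$ (from splitting $n_j=\sum_{k\le j}n_k-\sum_{k\le j-1}n_k$), and by proving the identity $\mathcal H((u_j);(v_j))=\frac{(-1)^q}{q!}\bigl(v_r^{-1}\frac{\partial}{\partial u_r}\bigr)^q[\mathcal G_r]$ via the telescoping identity $\sum_{I_1\subset I_2}\prod_{j\in I_1}a_j=\prod_{j\in I_2}(a_j+1)$ and Lemma \ref{lem:q-th differential of mathcalG}(2); this is what produces the coefficients $a^r_{\vecl,\vecm}(q)$ of \eqref{eqn:coeff. of q-th differential of mathcalG}. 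Symmetrically, on the $Z$-side, the coefficients of $(\log t)^q\Li_{\veck+\vecm}(t)$ in $Z_q(\veck)$ are \emph{not} simply ``read off'' from Proposition \ref{prop:recurrence relation of Mq}: one must show that $\sum_{i+j=q}(-1)^j\binom{q}{i}D^i[Z_0(\veck^{(j)})]$ collapses to $\sum_{\vecl,\vecm}a^r_{\vecl,\vecm}(q)\prod_j(k_j)_{l_j}\Li_{\veck+\vecm}(t)$, which requires the Pochhammer summation $\sum_{i+j=q}(-1)^j\binom{q}{i}(s-j)_{l+q}=(l+1)_q(s)_l$ (Lemma \ref{lem:summation formula for Pochhammer symbols}) together with Lemma \ref{lem:q-th differential of mathcalG}(1); this is Proposition \ref{prop:explicit expressions of barMq}. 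Without these two identifications the two expansions cannot be compared, so your proposal is a plan whose hardest steps are missing.

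Your fallback sketch (induction on $r$ via a factorization $\hatdeszeta_r(\veck,0)(t)=\hatdeszeta_1(0)(t)\hatdeszeta_{r-1}(\veck)(t)$ and a differential recurrence $D[\hatdeszeta_r(\veck)(t)]=\hatdeszeta_r(\veck')(t)$, mirroring Theorem \ref{thm:differential equation of M} and Proposition \ref{prop:M(k,0)=M(0)M(k)}) is plausible as a genuinely different route, but it is also only asserted: both recurrences would have to be established directly from the contour integral at integer points (where $\hatdeszeta_r$ is defined only as an analytic continuation/limit), and to invert $D$ by $J$ and run the induction you would additionally need $\hatdeszeta_r(\veck)(t)\in\mathcal A$ so that Lemma \ref{lem:operators J,D} pins down the integration constants. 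As it stands, neither the main route nor the alternative is carried to completion.
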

\begin{proof}
We set $[r]:= \{ 1,2,\dots,r \}$ for $r\in\N$.
We first calculate the integrand of the right-hand side in the equation \eqref{eqn: the definition of hyperposed zeta function}:
\begin{align*}
&\prod_{j=1}^r\left( 
\frac{1}{\frac{1}{t}\exp(\sum_{k=j}^rx_k)-1} 
+ \frac{(\log t-\sum_{k=j}^rx_k)\frac{1}{t}\exp(\sum_{k=j}^rx_k)}{\{ \frac{1}{t}\exp(\sum_{k=j}^rx_k)-1 \}^2} 
\right) \\
&=\prod_{j=1}^r\left( 
\frac{1}{\frac{1}{t}\exp(\sum_{k=j}^rx_k)-1} 
- \frac{\frac{1}{t}\sum_{k=j}^rx_k\exp(\sum_{k=j}^rx_k)}{\{ \frac{1}{t}\exp(\sum_{k=j}^rx_k)-1 \}^2} 
+\frac{\frac{1}{t}\exp(\sum_{k=j}^rx_k)}{\{ \frac{1}{t}\exp(\sum_{k=j}^rx_k)-1 \}^2}\log t 
\right) \\
&=\sum_{J\subset[r]}
\prod_{j\in J\setminus[r]}\left( 
\frac{1}{\frac{1}{t}\exp(\sum_{k=j}^rx_k)-1} 
- \frac{\frac{1}{t}\sum_{k=j}^rx_k\exp(\sum_{k=j}^rx_k)}{\{ \frac{1}{t}\exp(\sum_{k=j}^rx_k)-1 \}^2} 
\right) \\
&\quad \cdot\prod_{j\in J}\left( 
\frac{\frac{1}{t}\exp(\sum_{k=j}^rx_k)}{\{ \frac{1}{t}\exp(\sum_{k=j}^rx_k)-1 \}^2}
\right)(\log t)^{\#J} \\
&=\sum_{q\geq0}(\log t)^{q}\left\{
\sum_{\substack{J\subset[r] \\ \#J=q}}
\prod_{j\in [r]\setminus J}\left( 
\frac{1}{\frac{1}{t}\exp(\sum_{k=j}^rx_k)-1} 
- \frac{\frac{1}{t}\sum_{k=j}^rx_k\exp(\sum_{k=j}^rx_k)}{\{ \frac{1}{t}\exp(\sum_{k=j}^rx_k)-1 \}^2} 
\right) \right.\\
&\left. \quad \cdot\prod_{j\in J}\left( 
\frac{\frac{1}{t}\exp(\sum_{k=j}^rx_k)}{\{ \frac{1}{t}\exp(\sum_{k=j}^rx_k)-1 \}^2}
\right) \right\}. 
\intertext{We note that the empty summation is interpreted as $0$.}
&=\sum_{q\geq0}(\log t)^{q}\left\{
\sum_{\substack{J\subset[r] \\ \#J=q}}
\sum_{K\subset[r]\setminus J} (-1)^{\#K}
\prod_{j\in [r]\setminus (J\cup K)}\left( 
\frac{1}{\frac{1}{t}\exp(\sum_{k=j}^rx_k)-1} 
\right) \right.\\
&\left. \quad \cdot
\prod_{j\in K}\left( 
 \frac{\frac{1}{t}\sum_{k=j}^rx_k\exp(\sum_{k=j}^rx_k)}{\{ \frac{1}{t}\exp(\sum_{k=j}^rx_k)-1 \}^2} 
\right)
\prod_{j\in J}\left( 
\frac{\frac{1}{t}\exp(\sum_{k=j}^rx_k)}{\{ \frac{1}{t}\exp(\sum_{k=j}^rx_k)-1 \}^2}
\right) \right\} \\
&=\sum_{q\geq0}(\log t)^{q}\left\{
\sum_{\substack{J\subset[r] \\ \#J=q}}
\sum_{K\subset[r]\setminus J} (-1)^{\#K}
\prod_{j\in [r]\setminus (J\cup K)}\left( 
\frac{1}{\frac{1}{t}\exp(\sum_{k=j}^rx_k)-1} 
\right) \right.\\
&\left. \quad \cdot
\prod_{j\in K}\left( \sum_{k=j}^rx_k \right)
\prod_{j\in J\cup K}\left( 
\frac{\frac{1}{t}\exp(\sum_{k=j}^rx_k)}{\{ \frac{1}{t}\exp(\sum_{k=j}^rx_k)-1 \}^2}
\right) \right\}.
\end{align*}
Using two relations
$$
\frac{1}{\frac{1}{t}e^x-1}
=\sum_{n\geq1}(te^{-x})^n,
\qquad \frac{\frac{1}{t}e^x}{(\frac{1}{t}e^x-1)^2}
=\sum_{n\geq1}n(te^{-x})^n,
$$
we have
\begin{align*}
&\prod_{j=1}^r\left( 
\frac{1}{\frac{1}{t}\exp(\sum_{k=j}^rx_k)-1} 
+ \frac{(\log t-\sum_{k=j}^rx_k)\frac{1}{t}\exp(\sum_{k=j}^rx_k)}{\{ \frac{1}{t}\exp(\sum_{k=j}^rx_k)-1 \}^2} 
\right) \\
&=\sum_{q\geq0}(\log t)^{q}\left\{
\sum_{\substack{J\subset[r] \\ \#J=q}}
\sum_{K\subset[r]\setminus J} (-1)^{\#K}
\prod_{j\in [r]\setminus (J\cup K)}\left( 
\sum_{n_j\geq1}t^{n_j}\exp\left(-n_j\sum_{k=j}^rx_k\right)
\right) \right.\\
&\left. \quad \cdot
\prod_{j\in K}\left( \sum_{k=j}^rx_k \right)
\prod_{j\in J\cup K}\left( 
\sum_{n_j\geq1}n_jt^{n_j}\exp\left(-n_j\sum_{k=j}^rx_k\right)
\right) \right\} \\
&=\sum_{q\geq0}(\log t)^{q}\left\{
\sum_{\substack{J\subset[r] \\ \#J=q}}
\sum_{K\subset[r]\setminus J} (-1)^{\#K}
\sum_{\substack{n_j\geq1 \\ j\in [r]\setminus (J\cup K)}} 
\prod_{j\in [r]\setminus (J\cup K)}t^{n_j}\exp\left(-n_j\sum_{k=j}^rx_k\right)
 \right.\\
&\left. \quad \cdot
\prod_{j\in K}\left( \sum_{k=j}^rx_k \right)
\sum_{\substack{n_j\geq1 \\ j\in J\cup K}} 
\prod_{j\in J\cup K}n_jt^{n_j}\exp\left(-n_j\sum_{k=j}^rx_k\right)
\right\} \\
&=\sum_{q\geq0}(\log t)^{q}\left\{
\sum_{\substack{J\subset[r] \\ \#J=q}}
\sum_{K\subset[r]\setminus J} (-1)^{\#K}
\sum_{n_1,\dots,n_r\geq1}
\prod_{j=1}^rt^{n_j}\exp\left(-n_j\sum_{k=j}^rx_k\right)
\right.\\
&\left. \quad \cdot
\prod_{j\in K}\left( \sum_{k=j}^rx_k \right)
\left( 
\prod_{j\in J\cup K}n_j
\right) \right\} \\
&=\sum_{q\geq0}(\log t)^{q}\left\{
\sum_{\substack{J\subset[r] \\ \#J=q}}
\sum_{K\subset[r]\setminus J} (-1)^{\#K}
\sum_{n_1,\dots,n_r\geq1}
t^{n_1+\cdots+t_r}\prod_{j=1}^r\exp\left(-x_j\sum_{k=1}^jn_k\right)
\right.\\
&\left. \quad \cdot
\prod_{j\in K}\left( \sum_{k=j}^rx_k \right)
\left( 
\prod_{j\in J\cup K}n_j
\right) \right\}.
\end{align*}
Similarly to \cite[(3.18)]{FKMT1}, by putting
\begin{equation}\label{eqn:bK,vecl}
\prod_{j\in K}\left( \sum_{k=j}^rx_k \right)
=\sum_{\vecl\in\N_0^r}b_{K,\vecl}\prod_{j=1}^rx_j^{l_j},
\end{equation}
we get
\begin{align}\label{eqn:integrand of deszeta(vecs:t)}
&\prod_{j=1}^r\left( 
\frac{1}{\frac{1}{t}\exp(\sum_{k=j}^rx_k)-1} 
+ \frac{(\log t-\sum_{k=j}^rx_k)\frac{1}{t}\exp(\sum_{k=j}^rx_k)}{\{ \frac{1}{t}\exp(\sum_{k=j}^rx_k)-1 \}^2} 
\right) \\
&=\sum_{q\geq0}(\log t)^{q}\left\{
\sum_{\substack{J\subset[r] \\ \#J=q}}
\sum_{K\subset[r]\setminus J} (-1)^{\#K}
\sum_{n_1,\dots,n_r\geq1}
t^{n_1+\cdots+t_r}
\left( 
\prod_{j\in J\cup K}n_j
\right)
\right. \nonumber\\
&\left. \quad \cdot
\sum_{\vecl\in\N_0^r}b_{K,\vecl}\prod_{j=1}^rx_j^{l_j}
\prod_{j=1}^r\exp\left(-x_j\sum_{k=1}^jn_k\right)
 \right\}. \nonumber
\end{align}
Assume that $\mathfrak Rs_j$ is sufficiently large for $1\leq j\leq r$.
By using \eqref{eqn:integrand of deszeta(vecs:t)}, we have
\begin{align*}
&\hatdeszeta_r(s_1,\dots,s_r)(t) \\
&=\prod_{k=1}^r\frac{1}{\Gamma(s_k)}
\int_{[0,\infty)^r}\prod_{k=1}^rx_k^{s_k-1}dx_k 
\sum_{q\geq0}(\log t)^{q}\left\{
\sum_{\substack{J\subset[r] \\ \#J=q}}
\sum_{K\subset[r]\setminus J} (-1)^{\#K} \right.\\
&\left. \quad \cdot
\sum_{n_1,\dots,n_r\geq1}
t^{n_1+\cdots+t_r}
\left( 
\prod_{j\in J\cup K}n_j
\right)
\sum_{\vecl\in\N_0^r}b_{K,\vecl}\prod_{j=1}^rx_j^{l_j}
\prod_{j=1}^r\exp\left(-x_j\sum_{k=1}^jn_k\right)
\right\} \\
&=\prod_{k=1}^r\frac{1}{\Gamma(s_k)}
\sum_{q\geq0}(\log t)^{q}\left\{
\sum_{\substack{J\subset[r] \\ \#J=q}}
\sum_{K\subset[r]\setminus J} (-1)^{\#K}
\sum_{n_1,\dots,n_r\geq1}
t^{n_1+\cdots+t_r}
\left( 
\prod_{j\in J\cup K}n_j
\right) \right.\\
&\left. \quad \cdot
\sum_{\vecl\in\N_0^r}b_{K,\vecl}
\prod_{j=1}^r\int_{[0,\infty)}
\exp\left(-x_j\sum_{k=1}^jn_k\right)
x_j^{s_j+l_j-1}dx_j
\right\}.
\intertext{Because we have $\int_{[0,\infty)}
e^{-nx}
x^{s-1}dx=n^{-s}\Gamma(s)$, we calculate}
&=\prod_{k=1}^r\frac{1}{\Gamma(s_k)}
\sum_{q\geq0}(\log t)^{q}\left\{
\sum_{\substack{J\subset[r] \\ \#J=q}}
\sum_{K\subset[r]\setminus J} (-1)^{\#K}
\sum_{n_1,\dots,n_r\geq1}
t^{n_1+\cdots+t_r}
\left( 
\prod_{j\in J\cup K}n_j
\right) \right.\\
&\left. \quad \cdot
\sum_{\vecl\in\N_0^r}b_{K,\vecl}
\prod_{j=1}^r\frac{\Gamma(s_j+l_j)}{(n_1+\cdots+n_j)^{s_j+l_j}}
\right\}.
\end{align*}
Here, we have
\begin{align*}
\prod_{j\in J\cup K}n_j
&=\prod_{j\in J\cup K}\left( \sum_{k=1}^jn_k - \sum_{k=1}^{j-1}n_k \right)
=\sum_{I\subset (J\cup K)\setminus \{1\}}
\prod_{j\in (J\cup K)\setminus I}\left( \sum_{k=1}^jn_k \right) 
\prod_{j\in I}\left( -\sum_{k=1}^{j-1}n_k \right) \\
&=\sum_{I\subset (J\cup K)\setminus \{1\}}(-1)^{\#I}
\prod_{j\in (J\cup K)\setminus I}\left( \sum_{k=1}^jn_k \right) 
\prod_{j+1\in I}\left( \sum_{k=1}^{j}n_k \right),
\end{align*}
so by using this, we get
\begin{align*}
&\hatdeszeta_r(s_1,\dots,s_r)(t) \\
&=\prod_{k=1}^r\frac{1}{\Gamma(s_k)}
\sum_{q\geq0}(\log t)^{q}\left\{
\sum_{\substack{J\subset[r] \\ \#J=q}}
\sum_{K\subset[r]\setminus J} (-1)^{\#K}
\sum_{I\subset (J\cup K)\setminus \{1\}}(-1)^{\#I}
\right.\\
&\left. \quad \cdot
\sum_{\vecl\in\N_0^r}b_{K,\vecl}
\prod_{j=1}^r\Gamma(s_j+l_j)
\sum_{n_1,\dots,n_r\geq1}
\frac{t^{n_1+\cdots+t_r}}{\prod_{j=1}^r(n_1+\cdots+n_j)^{s_j+l_j-\delta_{j\in(J\cup K)\setminus I}-\delta_{j+1\in I}}}
\right\},
\end{align*}
where we use the symbol
$$
\delta_{i\in I}
:=\left\{\begin{array}{ll}
1&(i\in I), \\
0&(i\not\in I),
\end{array}\right.
$$
for $I\subset J\cup K$.
Therefore, we have
\begin{align}\label{eqn:expansion of t-deszeta}
\hatdeszeta_r(s_1,\dots,s_r)(t)
&=\sum_{q\geq0}(\log t)^{q}\left\{
\sum_{\substack{J\subset[r] \\ \#J=q}}
\sum_{K\subset[r]\setminus J}
\sum_{I\subset (J\cup K)\setminus \{1\}} (-1)^{\#K+\#I}
\right.\\
&\left. \quad \cdot
\sum_{\vecl\in\N_0^r}b_{K,\vecl}
\prod_{j=1}^r\frac{\Gamma(s_j+l_j)}{\Gamma(s_j)}
\zeta_r^\shuffle((s_j+l_j-\delta_{j\in(J\cup K)\setminus I}-\delta_{j+1\in I});t)
\right\} \nonumber\\
&=\sum_{q\geq0}(\log t)^{q}\left\{
\sum_{\substack{J\subset[r] \\ \#J=q}}
\sum_{K\subset[r]\setminus J}
\sum_{I\subset (J\cup K)\setminus \{1\}} (-1)^{\#K+\#I}
\right. \nonumber\\
&\left. \quad \cdot
\sum_{\vecl\in\N_0^r}b_{K,\vecl}
\left( \prod_{j=1}^r(s_j)_{l_j} \right)
\zeta_r^\shuffle((s_j+l_j-\delta_{j\in(J\cup K)\setminus I}-\delta_{j+1\in I});t)
\right\}. \nonumber
\end{align}
We next put
$$
\mathcal H((u_j);(v_j))
:=\sum_{\substack{J\subset[r] \\ \#J=q}}
\sum_{K\subset[r]\setminus J}
\sum_{I\subset (J\cup K)\setminus \{1\}} (-1)^{\#K+\#I}
\sum_{\vecl\in\N_0^r}b_{K,\vecl}
\prod_{j=1}^ru_j^{l_j}v_j^{l_j-\delta_{j\in(J\cup K)\setminus I}-\delta_{j+1\in I}}.
$$
Then, by using \eqref{eqn:bK,vecl}, we have
\begin{align*}
&\mathcal H((u_j);(v_j)) \\
&=\sum_{\substack{J\subset[r] \\ \#J=q}}
\sum_{K\subset[r]\setminus J}
\sum_{I\subset (J\cup K)\setminus \{1\}} (-1)^{\#K+\#I}
\prod_{j\in K}\left( \sum_{k=j}^ru_kv_k \right)
\prod_{j=1}^rv_j^{-\delta_{j\in(J\cup K)\setminus I}-\delta_{j+1\in I}} \\
&=\sum_{\substack{J\subset[r] \\ \#J=q}}
\sum_{K\subset[r]\setminus J}
\sum_{I\subset (J\cup K)\setminus \{1\}} (-1)^{\#K+\#I}
\prod_{j\in K}\left( \sum_{k=j}^ru_kv_k \right)
\left(\prod_{j\in (J\cup K)\setminus I} v_j^{-1} \right) 
\left(\prod_{j\in I} v_{j-1}^{-1} \right).
\intertext{Because $(-1)^{\#((J\cup K)\setminus I)}=(-1)^{\#J+\#K-\#I}=(-1)^{\#J}(-1)^{\#K+\#I}$, we calculate}
&=\sum_{\substack{J\subset[r] \\ \#J=q}}(-1)^{\#J}
\sum_{K\subset[r]\setminus J}
\prod_{j\in K}\left( \sum_{k=j}^ru_kv_k \right)
\left\{
\sum_{I\subset (J\cup K)\setminus \{1\}} 
\left(-\prod_{j\in (J\cup K)\setminus I} v_j^{-1} \right) 
\left(\prod_{j\in I} v_{j-1}^{-1} \right)
\right\} \\
&=\sum_{\substack{J\subset[r] \\ \#J=q}}(-1)^{\#J}
\sum_{K\subset[r]\setminus J}
\prod_{j\in K}\left( \sum_{k=j}^ru_kv_k \right)
\prod_{j\in J\cup K} \left(-v_j^{-1} + v_{j-1}^{-1} \right) \\
&=\sum_{\substack{J\subset[r] \\ \#J=q}}(-1)^{\#J}
\prod_{j\in J} \left(-v_j^{-1} + v_{j-1}^{-1} \right)
\sum_{K\subset[r]\setminus J}
\prod_{j\in K}\left\{ \left( \sum_{k=j}^ru_kv_k \right)\left(-v_j^{-1} + v_{j-1}^{-1} \right) \right\}.
\intertext{Here, for any set $I_1\subset I_2$ and for indeterminates $a_j$ ($j\in I_2$), we have
\begin{equation*}\label{eqn:sum prod aj=prod(aj+1)}
\sum_{I_1\subset I_2}\prod_{j\in I_1}a_j
=\prod_{j\in I_2}(a_j+1),
\end{equation*}
where we set the summation of the left-hand side as $1$ for $I_1=\emptyset$.
This relation with $I_1=K$, $I_2=[r]\setminus J$, and $a_j=\left( \sum_{k=j}^ru_kv_k \right)\left(-v_j^{-1} + v_{j-1}^{-1} \right)$ yields}
&=\sum_{\substack{J\subset[r] \\ \#J=q}}
\prod_{j\in J} \left(v_j^{-1} - v_{j-1}^{-1} \right)
\prod_{j\in [r]\setminus J}\left\{ \left( \sum_{k=j}^ru_kv_k \right)\left(-v_j^{-1} + v_{j-1}^{-1} \right)+1 \right\} \\
&=\sum_{\substack{J\subset[r] \\ \#J=q}}
\prod_{j\in J} \left(v_j^{-1} - v_{j-1}^{-1} \right)
\prod_{j\in [r]\setminus J}\left\{ 1- \left( u_jv_j+\cdots+u_rv_r \right)\left(v_j^{-1} - v_{j-1}^{-1} \right) \right\} \\
&=\prod_{j=1}^r\left\{ 1- \left( u_jv_j+\cdots+u_rv_r \right)\left(v_j^{-1} - v_{j-1}^{-1} \right) \right\} \\
&\quad \cdot \sum_{\substack{J\subset[r] \\ \#J=q}}
\prod_{j\in J} \left(v_j^{-1} - v_{j-1}^{-1} \right)
\prod_{j\in J}\left\{ 1- \left( u_jv_j+\cdots+u_rv_r \right)\left(v_j^{-1} - v_{j-1}^{-1} \right) \right\}^{-1} \\
&=\prod_{j=1}^r\left\{ 1- \left( u_jv_j+\cdots+u_rv_r \right)\left(v_j^{-1} - v_{j-1}^{-1} \right) \right\} \\
&\quad \cdot \sum_{\substack{J\subset[r] \\ \#J=q}}
\prod_{j\in J} \frac{v_j^{-1} - v_{j-1}^{-1} }{1- \left( u_jv_j+\cdots+u_rv_r \right)\left(v_j^{-1} - v_{j-1}^{-1} \right)}.
\intertext{By \eqref{eqn:def of mathcalGr} of $\mathcal G_r$, the definition \eqref{eqn:definition of Grk} of $G_{r,k}$ and Lemma \ref{lem:q-th differential of mathcalG}.(2), we have}
&=\mathcal G_r
\sum_{\substack{J\subset[r] \\ \#J=q}}
\prod_{j\in J} G_{r,j} 
=\frac{(-1)^q}{q!}\left( v_r^{-1}\frac{\partial}{\partial u_r} \right)^q\bigl[ \mathcal G_r \bigr],
\end{align*}
that is, we obtain
$$
\mathcal H((u_j);(v_j))
=\frac{(-1)^q}{q!}\left( v_r^{-1}\frac{\partial}{\partial u_r} \right)^q\bigl[ \mathcal G_r \bigr].
$$
Therefore, by using \eqref{eqn:coeff. of q-th differential of mathcalG} and \eqref{eqn:expansion of t-deszeta}, we get
\begin{align*}
\hatdeszeta_r(s_1,\dots,s_r)(t)
&=\sum_{q\geq0}\frac{(-\log t)^q}{q!}\left\{
\sum_{\substack{\vecl=(l_j)\in\N_0^r \\ \vecm=(m_j)\in\Z^r \\ |\vecm|=-q}}
a_{\vecl,\vecm}^r(q)\left( \prod_{j=1}^r(s_j)_{l_j} \right)
\zeta_r^\shuffle(\vecs+\vecm;t)
\right\}.
\end{align*}
Because $\zeta_r^\shuffle(\veck;t)=\Li_{\veck}(t)$ for $\veck\in\Z^r$, by Proposition \ref{prop:explicit expressions of barMq}, we obtain
\begin{align*}
\hatdeszeta_r(k_1,\dots,k_r)(t)
&=\sum_{q\geq0}\frac{(-\log t)^q}{q!}\left\{
\sum_{\substack{\vecl=(l_j)\in\N_0^r \\ \vecm=(m_j)\in\Z^r \\ |\vecm|=-q}}
a_{\vecl,\vecm}^r(q)\left( \prod_{j=1}^r(k_j)_{l_j} \right)
\Li_{\veck+\vecm}(t)
\right\} \\
&
=\sum_{q\geq0}\frac{(-\log t)^q}{q!}Z_q(\veck)
=Z(\veck).
\end{align*}
Hence, we finish the proof.
\end{proof}

\begin{thm}\label{thm:holomorphic of desLi}
For any $k_1,\dots,k_r\in\Z$ and for any $t\in(0,1)$, we have
\begin{equation*}\label{eqn:hypzeta=hypLi}
\hatdeszeta_r(k_1,\dots,k_r)(t)=\desLi(k_1,\dots,k_r)(t).
\end{equation*}
Especially, we have
\begin{equation}\label{eqn:hypLi to deszeta}
\lim_{t \rightarrow 1-0} \desLi(k_1,\dots,k_r)(t)
=\deszeta_r(k_1,\dots,k_r).
\end{equation}
\end{thm}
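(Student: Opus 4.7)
The proof is essentially a corollary of the two representation theorems that precede it, together with the holomorphy/limit result for $\hatdeszeta_r(\vecs)(t)$. The plan is to simply chain the identities rather than to do any new analytic work.

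First, I would invoke Theorem \ref{thm:hypzeta=M(veck)}, which yields the identification
\[
\hatdeszeta_r(k_1,\dots,k_r)(t) = Z(\veck)
\]
for every $\veck=(k_1,\dots,k_r)\in\Z^r$ and $t\in(0,1)$. Next, I would apply Theorem \ref{thm:M(veck)=hypLi}, which gives the complementary identification
\[
\desLi(\veck)(t) = Z(\veck).
\]
Equating the right-hand sides of these two identities proves the first assertion
$\hatdeszeta_r(k_1,\dots,k_r)(t) = \desLi(k_1,\dots,k_r)(t)$.

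For the limit statement \eqref{eqn:hypLi to deszeta}, I would then pass to $t\rightarrow 1-0$ in this equality. The left-hand side, being $\hatdeszeta_r(k_1,\dots,k_r)(t)$, is controlled by Proposition \ref{prop:holomorphic of t-deszeta}, which guarantees that for any $(s_1,\dots,s_r)\in\C^r$ one has $\lim_{t\to 1-0}\hatdeszeta_r(s_1,\dots,s_r)(t)=\deszeta_r(s_1,\dots,s_r)$; specializing to the integer point $\vecs=\veck$ gives
\[
\lim_{t\rightarrow 1-0}\hatdeszeta_r(k_1,\dots,k_r)(t) = \deszeta_r(k_1,\dots,k_r).
\]
Since $\desLi(\veck)(t)=\hatdeszeta_r(\veck)(t)$ on $(0,1)$, the same limit value transfers to $\desLi(\veck)(t)$, producing \eqref{eqn:hypLi to deszeta}.

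There is no real obstacle at this stage: all the hard work — the combinatorial manipulation of the Hankel-contour integrand that produces the finite MPL expansion with the coefficients $a^r_{\vecl,\vecm}(q)$, and the verification that $\desLi(\veck)(t)$ admits the very same expansion through the Hopf-algebraic identities on $\mathcal{H}_-$ — has already been carried out in the two preceding theorems. The only minor point worth stating explicitly in the write-up is that Proposition \ref{prop:holomorphic of t-deszeta} is legitimately applicable at integer arguments (the apparent poles of $C(\vecs)$ at $s_k\in\N$ are removed there by the residue computation), so that the limit on the right-hand side is indeed $\deszeta_r(\veck)$ rather than an indeterminate expression.
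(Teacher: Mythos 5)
Your proposal is correct and follows exactly the paper's own argument: the paper also obtains $\hatdeszeta_r(\veck)(t)=Z(\veck)=\desLi(\veck)(t)$ by combining Theorem \ref{thm:M(veck)=hypLi} with Theorem \ref{thm:hypzeta=M(veck)}, and then deduces \eqref{eqn:hypLi to deszeta} from Proposition \ref{prop:holomorphic of t-deszeta}. Your extra remark that the proposition applies at integer points (since $\hatdeszeta_r(\vecs)(t)$ is shown there to be entire) is a sensible clarification but not a departure from the paper's route.
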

\begin{proof}
By Theorem \ref{thm:M(veck)=hypLi} and Theorem \ref{thm:hypzeta=M(veck)}, we have
$$
\hatdeszeta_r(k_1,\dots,k_r)(t)=Z(\veck)=\desLi(k_1,\dots,k_r)(t).
$$
By using this equation, we get \eqref{eqn:hypLi to deszeta}:
\begin{align*}
\lim_{t \rightarrow 1-0} \desLi(\veck)(t)
=\lim_{t \rightarrow 1-0}\hatdeszeta_r(\veck)(t)
=\deszeta_r(\veck).
\end{align*}
Here, the second equality holds by Proposition \ref{prop:holomorphic of t-deszeta}.
\end{proof}

\begin{rem}
Proposition \ref{thm:holomorphic of desLi} is a generalization of Proposition \ref{prop:special values of L at non-positive integers}.
\end{rem}

We are now ready to present our main theorem.

\begin{thm}\label{cor:shuffle product of deszeta}

  We define the $\Q$-linear map $\deszeta_\shuffle:\mathcal H\rightarrow \R$ by $
  \deszeta_\shuffle(1):=1$ and
  $$
      \deszeta_\shuffle(j^{k_r}y \cdots j^{k_1}y)
      :=\lim_{t\rightarrow1-0}\desLi(k_1,\dots,k_r)(t),
  $$
  for $k_1,\dots,k_r\in\Z$.
  Then, this map $\deszeta_\shuffle$ forms a $\Q$-algebra homomorphism, i.e.,
  the ``shuffle-type'' formula holds for special values at any integer points of desingularized MZFs.
\end{thm}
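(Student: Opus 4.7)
The plan is to factor $\deszeta_\shuffle$ through the algebra $\mathcal A$ of functions on $(0,1)$. By Lemma \ref{lem:alg. hom. from mathcalH to mathcalA} we already have at our disposal a $\Q$-algebra homomorphism $\psi:(\mathcal H,\shuffle_0)\to(\mathcal A,\cdot)$ sending $j^{k_r}y\cdots j^{k_1}y$ to $\desLi(k_1,\dots,k_r)(t)$; in particular the shuffle identity $\psi(w_1\shuffle_0 w_2)=\psi(w_1)\,\psi(w_2)$ holds pointwise on $(0,1)$. By its very definition, $\deszeta_\shuffle$ is the composition of $\psi$ with the evaluation map $f\mapsto \lim_{t\to 1-0}f(t)$, so the theorem reduces to showing (i) this evaluation is well-defined on $\operatorname{Im}(\psi)$, and (ii) it preserves products.

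For (i), recall that words ending in $j^{k}$ with $k\ne 0$ lie in the ideal $\mathcal T$, so every class in $\mathcal H$ is a finite $\Q$-linear combination of words of the form $j^{k_r}y\cdots j^{k_1}y$. Consequently $\psi(w)$ is a finite $\Q$-linear combination of functions $\desLi(\veck)(t)$. By Theorem \ref{thm:holomorphic of desLi}, each $\desLi(\veck)(t)$ converges to $\deszeta_r(\veck)\in\R$ as $t\to 1-0$, so $\psi(w)(t)$ converges to a well-defined real number. This legitimates the definition of $\deszeta_\shuffle:\mathcal H\to\R$ and shows it to be $\Q$-linear.

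For (ii), continuity of multiplication on $\R$ then gives, for any $w_1,w_2\in\mathcal H$,
\[
\deszeta_\shuffle(w_1\shuffle_0 w_2)=\lim_{t\to 1-0}\psi(w_1)(t)\,\psi(w_2)(t)=\Bigl(\lim_{t\to 1-0}\psi(w_1)(t)\Bigr)\Bigl(\lim_{t\to 1-0}\psi(w_2)(t)\Bigr)=\deszeta_\shuffle(w_1)\,\deszeta_\shuffle(w_2),
\]
which, together with $\deszeta_\shuffle(1)=1$, proves the claim.

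All the conceptual work is already packaged into Theorem \ref{thm:holomorphic of desLi}, which itself was extracted by matching the two representations in Theorems \ref{thm:M(veck)=hypLi} and \ref{thm:hypzeta=M(veck)}; once those are granted, the main theorem is a formal consequence of the shuffle identity holding at the level of the analytic functions in $\mathcal A$. The only obstacle to watch for is really clerical, namely the reversal of the index order between the definitions of $\psi$ and of $\deszeta_\shuffle$, which is already correctly aligned in the statement so that the composition above produces $\deszeta_r(k_1,\dots,k_r)$ in the intended order.
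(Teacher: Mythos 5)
Your proposal is correct and follows essentially the same route as the paper: the paper's proof likewise composes the algebra homomorphism $\psi$ of Lemma \ref{lem:alg. hom. from mathcalH to mathcalA} with the limit $t\rightarrow 1-0$, whose existence and value $\deszeta_r(k_1,\dots,k_r)$ are supplied by \eqref{eqn:hypLi to deszeta} of Theorem \ref{thm:holomorphic of desLi}. You merely spell out the two formal points (well-definedness of the limit on the span of the $\desLi(\veck)(t)$, and compatibility of limits with products) that the paper leaves implicit.
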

\begin{proof}
By Lemma \ref{lem:alg. hom. from mathcalH to mathcalA}, this map $\deszeta_\shuffle$ forms a $\Q$-algebra homomorphism, and the image is nothing but $\deszeta_r(k_1,\dots,k_r)$ by \eqref{eqn:hypLi to deszeta}.
Hence, we obtain the claim.
\end{proof}

\begin{rem}
This theorem is a generalization of Proposition \ref{prop:shuffle type formula of deszeta} and Proposition \ref{prop:shuffle relation of dep1 des MZF}.
\end{rem}

We show some examples of the product of two special values of desingularized MZF.
Especially, we consider 

\begin{exa}
  We calculate the product $\deszeta_1(-k)\deszeta_1(l)$ for $k, l\in\N$.
  By definition of $\shuffle_0$ \eqref{eqn:def of shuffle0}, one can calculate the following:
  For $k$, $l\in\N$, we have
  \begin{equation*}
    d^ky\shuffle_0 j^ly = \sum_{i=0}^{\min\{k,l-1\}}
                                      (-1)^i\binom{k}{i}
                                       d^{k-i}yj^{l-i}y
                                     + (-1)^l\sum_{i=0}^{k-l}
                                          \binom{k-1-i}{l-1}
                                           d^{k-l-i}yd^iy.  
  \end{equation*}
  We note that the empty summation is interpreted as 0. 
  Thus we have
  \begin{align*}
    \deszeta_1(-k)\deszeta_1(l)
     &= \sum_{i=0}^{\min\{k,l-1\}}
          (-1)^i\binom{k}{i}
           \deszeta_2(l-i,-k+i) \\
     &\phantom{=}    
     + (-1)^l\sum_{i=0}^{k-l}
              \binom{k-1-i}{l-1}
               \deszeta_2(-i,-k+l+i).
  \end{align*}
\end{exa}  
  
\begin{exa}  
  We calculate the product $\deszeta_2(l,-k)\deszeta_1(m)$ for $k, l, m\in\N$.
  The following holds for $k$, $l$, $m\in\N$:
  \begin{align*}
    &d^kyj^ly\shuffle_0j^my \\
      &= \sum_{i=0}^{\min\{k,l-1\}}
             \sum_{p=1}^{l+m-i-1}
              (-1)^i\binom{k}{i}
               \left\{ \binom{p-1}{l-1} + \binom{p-1}{m-i-1} \right\}
                d^{k-i}yj^{p}yj^{l+m-i-p}y \\
      &\phantom{=}
             + (-1)^m\sum_{i=0}^{k-m}
                  \binom{m-1+i}{m-1}
                   d^{i}yd^{k-m-i}yj^{l}y.
  \end{align*}
  Thus we have
  \begin{align*}
    &\deszeta_2(l,-k)\deszeta_1(m) \\
      &= \sum_{i=0}^{\min\{k,l-1\}}
             \sum_{p=1}^{l+m-i-1}
              (-1)^i\binom{k}{i}
               \left\{ \binom{p-1}{l-1} + \binom{p-1}{m-i-1} \right\}
                \deszeta_3(l+m-i-p,p,-k+i) \\  
      &\phantom{=}
             + (-1)^m\sum_{i=0}^{k-m}
                  \binom{m-1+i}{m-1}
                   \deszeta_3(l,-k+m+i,-i). 
  \end{align*}
  Based on some numerical experiments with depth 1, we believe that we will be able to prove 
  Proposition \ref{prop:shuffle type formula of deszeta} for $s_1,\dots,s_p\in\Z$ from \eqref{eqn:def of shuffle0} 
  and \eqref{def: definition of two sided ideal mathcalL}.
\end{exa}

\appendix
\section{Explicit formula of $Z_q(\veck)$}
\label{sec:Explicit formula of Mq(veck)}
In this appendix, we give explicit formulae of $Z_q(\veck)$ in Definition \ref{def:definition of Mq} in terms of $\Li_{\veck+\vecm}(t)$ for $\vecm\in\Z^r$.
As a consequence, we find the explicit expression of $Z(\veck)$ which is required  to prove 
Theorem \ref{thm:hypzeta=M(veck)}.

For $r\geq q\geq0$, we define the set of integers $\{a_{\vecl,\vecm}^r(q)\}$ by
\begin{equation}\label{eqn:coeff. of q-th differential of mathcalG}
\left( v_r^{-1}\frac{\partial}{\partial u_r} \right)^q\bigl[ \mathcal G_r \bigr]
=\sum_{\substack{\vecl=(l_j)\in\N_0^r \\ \vecm=(m_j)\in\Z^r \\ |\vecm|=-q}}
a_{\vecl,\vecm}^r(q) \prod_{j=1}^ru_j^{l_j}v_j^{m_j}.
\end{equation}
It is clear that $a_{\vecl,\vecm}^r(0)=a_{\vecl,\vecm}^r$.
For $r\geq k\geq1$, we put
\begin{equation}\label{eqn:definition of Grk}
G_{r,k}:=G_{r,k}((u_j);(v_j))
:=\frac{v_k^{-1}-v_{k-1}^{-1}}{1-(u_kv_k+\cdots+u_rv_r)(v_k^{-1}-v_{k-1}^{-1})}.
\end{equation}
We note that we have
\begin{equation}\label{eqn:differential of Grk}
v_r^{-1}\frac{\partial}{\partial u_r}\bigl[ G_{r,k} \bigr]
=G_{r,k}^2.
\end{equation}
\begin{lem}\label{lem:q-th differential of mathcalG}
The following two hold.
\begin{enumerate}
\item For $q\in[r]$, $\vecl=(l_j)\in\N_0^r$ and $\vecm=(m_j)\in\Z^r$ with $|\vecm|=-q$, we have
$$
a_{\vecl,\vecm}^r(q)=(l_r+1)_q\cdot a_{\vecl^{(-q)},\vecm^{(-q)}}^r.
$$
\item For $q\in[r]$, we have
\begin{equation}\label{eqn:q-th differential of mathcalG}
\left( v_r^{-1}\frac{\partial}{\partial u_r} \right)^q\bigl[ \mathcal G_r \bigr]
=(-1)^qq!\cdot \mathcal G_r
\sum_{\substack{J\subset[r] \\ \#J=q}}\left( \prod_{j\in J}G_{r,j} \right).
\end{equation}
\end{enumerate}
\end{lem}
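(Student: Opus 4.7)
The plan is to prove the two parts separately. Part (1) is a direct monomial-tracking computation, while part (2) is an induction on $q$ that exploits the logarithmic structure of $\mathcal G_r$ together with the identity $v_r^{-1}\partial_{u_r}[G_{r,k}]=G_{r,k}^2$ recorded in \eqref{eqn:differential of Grk}.

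For part (1), I would substitute the expansion $\mathcal G_r=\sum_{\vecl,\vecm}a_{\vecl,\vecm}^r\prod_j u_j^{l_j}v_j^{m_j}$ (with $|\vecm|=0$) from \eqref{eqn:the seq of coeff of deszeta} into the left-hand side of \eqref{eqn:coeff. of q-th differential of mathcalG}. A single application of $v_r^{-1}\partial_{u_r}$ sends a monomial $\prod_j u_j^{l_j}v_j^{m_j}$ to $l_r\cdot u_r^{l_r-1}v_r^{m_r-1}\prod_{j<r} u_j^{l_j}v_j^{m_j}$, so iterating $q$ times produces the falling factorial $l_r(l_r-1)\cdots(l_r-q+1)$ as coefficient and shifts both $l_r$ and $m_r$ down by $q$. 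Reindexing $l_r\mapsto l_r+q$, $m_r\mapsto m_r+q$ converts $|\vecm|=0$ into $|\vecm|=-q$, the falling factorial into the Pochhammer symbol $(l_r+1)_q$, and the coefficient $a^r_{\vecl,\vecm}$ into $a^r_{\vecl^{(-q)},\vecm^{(-q)}}$. Comparing with the defining expansion of $a^r_{\vecl,\vecm}(q)$ yields the asserted identity.

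For part (2), set $D:=v_r^{-1}\partial_{u_r}$ and $S_q:=\sum_{\#J=q}\prod_{j\in J}G_{r,j}$, with $S_0:=1$. Taking the logarithmic derivative of $\mathcal G_r=\prod_{k=1}^r(1-(u_kv_k+\cdots+u_rv_r)(v_k^{-1}-v_{k-1}^{-1}))$ gives $\partial_{u_r}\log\mathcal G_r=-v_r\sum_k G_{r,k}$, so multiplying by $v_r^{-1}$ establishes the base case
$$
D\mathcal G_r=-\mathcal G_r\,S_1.
$$
For the inductive step, the Leibniz rule together with $DG_{r,k}=G_{r,k}^2$ yields
$$
DS_q=\sum_{\#J=q}\Bigl(\sum_{j\in J}G_{r,j}\Bigr)\prod_{j'\in J}G_{r,j'},
$$
and splitting the product $\bigl(\sum_{j=1}^{r}G_{r,j}\bigr)\prod_{j'\in J}G_{r,j'}$ according to whether the outer index $j$ lies in $J$ or not (the second case reassembling each $(q+1)$-subset exactly $q+1$ times) produces the combinatorial identity
$$
\Bigl(\sum_{j=1}^{r}G_{r,j}\Bigr)S_q=DS_q+(q+1)\,S_{q+1}.
$$
Combining this with $D[\mathcal G_r S_q]=(D\mathcal G_r)S_q+\mathcal G_r\,DS_q=-\mathcal G_r S_1 S_q+\mathcal G_r\,DS_q$ gives $D[\mathcal G_r S_q]=-(q+1)\mathcal G_r S_{q+1}$, whence $D^{q+1}\mathcal G_r=(-1)^q q!\cdot D[\mathcal G_r S_q]=(-1)^{q+1}(q+1)!\,\mathcal G_r S_{q+1}$, closing the induction.

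The main obstacle is the combinatorial identity $(\sum_j G_{r,j})S_q=DS_q+(q+1)S_{q+1}$, which is where the factor $(-1)^q q!$ in \eqref{eqn:q-th differential of mathcalG} ultimately originates; everything else is formal bookkeeping.
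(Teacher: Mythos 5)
Your proposal is correct and follows essentially the same route as the paper: part (1) by tracking monomials under $v_r^{-1}\partial_{u_r}$ and reindexing $l_r\mapsto l_r+q$, $m_r\mapsto m_r+q$, and part (2) by induction on $q$ using the logarithmic-derivative base case $D\mathcal G_r=-\mathcal G_r S_1$, the relation $DG_{r,k}=G_{r,k}^2$, and the same overcounting of $(q+1)$-subsets (your identity $(\sum_j G_{r,j})S_q=DS_q+(q+1)S_{q+1}$ is just a tidier packaging of the paper's direct sum manipulation). No gaps.
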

\begin{proof}
We first prove the claim (1).
By the definition of $\mathcal G_r $, we calculate
\begin{align*}
\left( v_r^{-1}\frac{\partial}{\partial u_r} \right)^q\bigl[ \mathcal G_r \bigr]
&=\sum_{\substack{\vecl=(l_j)\in\N_0^r \\ \vecm=(m_j)\in\Z^r \\ |\vecm|=0}}
a_{\vecl,\vecm}^r \left( \prod_{j=1}^{r-1}u_j^{l_j}v_j^{m_j} \right) (l_r-q+1)_qu_r^{l_r-q}v_r^{m_r-q}.
\intertext{By replacing $l_r-q$ to $l_r$ and $m_r-q$ to $m_r$, we have}
&=\sum_{\substack{\vecl=(l_j)\in\N_0^r \\ \vecm=(m_j)\in\Z^r \\ |\vecm|=-q}}
a_{\vecl^{(-q)},\vecm^{(-q)}}^r \left( \prod_{j=1}^{r-1}u_j^{l_j}v_j^{m_j} \right) (l_r+1)_qu_r^{l_r}v_r^{m_r} \\
&=\sum_{\substack{\vecl=(l_j)\in\N_0^r \\ \vecm=(m_j)\in\Z^r \\ |\vecm|=-q}}
(l_r+1)_q\cdot a_{\vecl^{(-q)},\vecm^{(-q)}}^r \prod_{j=1}^{r}u_j^{l_j}v_j^{m_j}.
\end{align*}
Therefore, by comparing this and \eqref{eqn:coeff. of q-th differential of mathcalG}, we obtain the claim (1).

We next prove the claim (2) by induction on $q$.
We note that, for any rational functions $f_1,\dots,f_r$ in $u_r$, we have
\begin{equation}\label{eqn:differential of product}
\frac{\partial}{\partial u_r}\left[ \prod_{j=1}^r f_j \right]
=\left( \prod_{j=1}^r f_j \right)\sum_{k=1}^r\frac{1}{f_k}\frac{\partial}{\partial u_r}\left[ f_k \right].
\end{equation}
When $q=1$, by using \eqref{eqn:differential of product} and the definition of $\mathcal G_r$, we directly have
\begin{align}\label{eqn:1-th differential of mathcalG}
\left( v_r^{-1}\frac{\partial}{\partial u_r} \right)\bigl[ \mathcal G_r \bigr]
&=v_r^{-1}\mathcal G_r\sum_{k=1}^r
\cdot \frac{-v_r(v_k^{-1}-v_{k-1}^{-1})}{1-(u_kv_k+\cdots+u_rv_r)(v_k^{-1}-v_{k-1}^{-1})} \\
&=-\mathcal G_r\sum_{k=1}^r G_{r,k}
=(-1)^11!\cdot\mathcal G_r\sum_{\substack{J\subset[r] \\ \#J=1}} \left( \prod_{j\in J}G_{r,j} \right). \nonumber
\end{align}
So the equation \eqref{eqn:q-th differential of mathcalG} holds for $q=1$.
Assume the equation \eqref{eqn:q-th differential of mathcalG} holds for $q\,(\geq1)$ or less.
Then, by the induction hypothesis, we have
\begin{align*}
&\left( v_r^{-1}\frac{\partial}{\partial u_r} \right)^{q+1}\bigl[ \mathcal G_r \bigr]
=\left( v_r^{-1}\frac{\partial}{\partial u_r} \right)
\left[ (-1)^qq!\cdot \mathcal G_r
\sum_{\substack{J\subset[r] \\ \#J=q}}\left( \prod_{j\in J}G_{r,j} \right)
\right] \\
&=(-1)^qq! \left( v_r^{-1}\frac{\partial}{\partial u_r} \right)
\left[ \mathcal G_r \right]
\sum_{\substack{J\subset[r] \\ \#J=q}}\left( \prod_{j\in J}G_{r,j} \right)
+  (-1)^qq!\cdot \mathcal G_r
\sum_{\substack{J\subset[r] \\ \#J=q}}
\left( v_r^{-1}\frac{\partial}{\partial u_r} \right)
\left[ \prod_{j\in J}G_{r,j} \right].
\intertext{By applying \eqref{eqn:1-th differential of mathcalG} to the first term and by applying \eqref{eqn:differential of product} to the second term, we calculate}
&=(-1)^{q+1}q!\cdot \mathcal G_r\left\{
\left( \sum_{k=1}^r G_{r,k} \right)
\sum_{\substack{J\subset[r] \\ \#J=q}}\left( \prod_{j\in J}G_{r,j} \right)
-\sum_{\substack{J\subset[r] \\ \#J=q}}
\left( \prod_{j\in J}G_{r,j} \right)
\sum_{k\in J}\frac{v_r^{-1}}{G_{r,k}}\frac{\partial}{\partial u_r}\left[ G_{r,k} \right]
\right\}.
\intertext{By using \eqref{eqn:differential of Grk} in the second term, we get}
&=(-1)^{q+1}q!\cdot \mathcal G_r\left\{
\left( \sum_{k=1}^r G_{r,k} \right)
\sum_{\substack{J\subset[r] \\ \#J=q}}\left( \prod_{j\in J}G_{r,j} \right)
-\sum_{\substack{J\subset[r] \\ \#J=q}}
\left( \prod_{j\in J}G_{r,j} \right)
\sum_{k\in J} G_{r,k}
\right\} \\
&=(-1)^{q+1}q!\cdot \mathcal G_r
\sum_{\substack{J\subset[r] \\ \#J=q}}\left( \prod_{j\in J}G_{r,j} \right)
\left\{
\sum_{k=1}^r G_{r,k} -\sum_{k\in J} G_{r,k}
\right\} \\
&=(-1)^{q+1}q!\cdot \mathcal G_r
\sum_{\substack{J\subset[r] \\ \#J=q}}
\left\{
\left( \prod_{j\in J}G_{r,j} \right)
\sum_{k\in [r]\setminus J} G_{r,k}
\right\} \\
&=(-1)^{q+1}q!\cdot \mathcal G_r
\sum_{\substack{J\subset[r] \\ \#J=q}}
\sum_{k\in [r]\setminus J}
\left\{
\left( \prod_{j\in J}G_{r,j} \right) G_{r,k}
\right\}.
\intertext{By putting $I=J\cup \{k\}$, we have $\#I=\#J+1=q+1$.
So we calculate}
&=(-1)^{q+1}q!\cdot \mathcal G_r
\sum_{\substack{I\subset[r] \\ \#I=q+1}}
\sum_{k\in I}
\left\{
\left( \prod_{j\in I\setminus \{k\}}G_{r,j} \right) G_{r,k}
\right\} \\
&=(-1)^{q+1}q!\cdot \mathcal G_r
\sum_{\substack{I\subset[r] \\ \#I=q+1}}
\sum_{k\in I}
\left( \prod_{j\in I}G_{r,j} \right) \\
&=(-1)^{q+1}q!\cdot \mathcal G_r
\sum_{\substack{I\subset[r] \\ \#I=q+1}}
(q+1) \left( \prod_{j\in I}G_{r,j} \right)
=(-1)^{q+1}(q+1)!\cdot \mathcal G_r
\sum_{\substack{I\subset[r] \\ \#I=q+1}}
\left( \prod_{j\in I}G_{r,j} \right).
\end{align*}
Therefore, the equation \eqref{eqn:q-th differential of mathcalG} holds for $q+1$.
Hence, we obtain the claim (2).
\end{proof}

By the above lemma, we get explicit expressions of $Z_q(\veck)$.
\begin{prop}\label{prop:explicit expressions of barMq}
For any $q\in[r]$, we have
\begin{align}\label{eqn:explicit expressions of barMq}
Z_q(\veck)
=\sum_{\substack{\vecl=(l_j)\in\N_0^r \\ \vecm=(m_j)\in\Z^r \\ |\vecm|=-q}}
a_{\vecl,\vecm}^r(q)
\left(
\prod_{j=1}^r(k_j)_{l_j}
\right)
\Li_{\veck+\vecm}(t).
\end{align}
\end{prop}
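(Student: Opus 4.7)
The plan is to prove this by induction on $q$, using the recurrence $Z_q(\veck) = D[Z_{q-1}(\veck)] - Z_{q-1}(\veck')$ from Proposition \ref{prop:recurrence relation of Mq}. For the base case $q = 0$, the claim reduces to \eqref{eqn:definition of M0} together with the observation that $a_{\vecl,\vecm}^r(0) = a_{\vecl,\vecm}^r$, which is immediate from comparing \eqref{eqn:the seq of coeff of deszeta} with the $q=0$ instance of \eqref{eqn:coeff. of q-th differential of mathcalG}.

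For the inductive step, assuming the formula at level $q-1$, I would apply $D$ term-by-term using the basic identity $D[\Li_{\veck+\vecm}(t)] = \Li_{(\veck+\vecm)^{(1)}}(t)$; this holds uniformly in $\veck+\vecm\in\Z^r$ because the potentially anomalous case (last index equal to $1$) is absorbed by the series identity $\Li_{k_1,\dots,k_{r-1},0}(t) = \frac{t}{1-t}\Li_{k_1,\dots,k_{r-1}}(t)$, which is immediate from the series definition. After reindexing $\vecm \mapsto \vecm^{(-1)}$ in both $D[Z_{q-1}(\veck)]$ and $Z_{q-1}(\veck')$ so that the surviving MPLs are $\Li_{\veck+\vecm}(t)$ with $|\vecm| = -q$, their difference consolidates into
\[
  Z_q(\veck) = \sum_{\substack{\vecl\in\N_0^r,\,\vecm\in\Z^r \\ |\vecm|=-q}} a_{\vecl,\vecm^{(-1)}}^r(q-1)\left(\prod_{j<r}(k_j)_{l_j}\right)\bigl[(k_r)_{l_r}-(k_r-1)_{l_r}\bigr]\Li_{\veck+\vecm}(t).
\]
A short telescoping for the Pochhammer symbol gives $(k_r)_{l_r}-(k_r-1)_{l_r}= l_r \cdot (k_r)_{l_r-1}$.

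Next I would reindex $\vecl \mapsto \vecl^{(-1)}$ so that the Pochhammer factor becomes $(l_r+1)(k_r)_{l_r}$ and the resulting expression matches \eqref{eqn:explicit expressions of barMq} up to the single coefficient identity
\[
  a_{\vecl,\vecm}^r(q) = (l_r+1)\cdot a_{\vecl^{(-1)},\vecm^{(-1)}}^r(q-1).
\]
This is read off directly by applying $v_r^{-1}\frac{\partial}{\partial u_r}$ once to the generating-function expansion \eqref{eqn:coeff. of q-th differential of mathcalG} at level $q-1$ and comparing coefficients; equivalently, it is the one-step case of Lemma \ref{lem:q-th differential of mathcalG}(1). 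No new analytic input is needed, and the only delicate aspect is careful tracking of the shifts by $\pm 1$ in the $r$-th coordinates of $\vecl$ and $\vecm$; I expect this bookkeeping to be the main source of potential error but not a genuine obstacle.
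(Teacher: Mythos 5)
Your proof is correct, but it takes a genuinely different route from the paper's. The paper argues non-inductively: it unwinds Definition \ref{def:definition of Mq} directly, expands each $Z_0(\veck^{(j)})$ via Proposition \ref{prop:deszeta can be written as sum of shuffle-mzfs}, applies $D^i$, reindexes $(l_r,m_r)\mapsto(l_r+q,m_r+q)$, and then collapses the binomial sum with the closed-form identity $\sum_{i+j=q}(-1)^j\binom{q}{i}(s-j)_{l+q}=(l+1)_q\,(s)_l$ (Lemma \ref{lem:summation formula for Pochhammer symbols}), finishing with the full $q$-step statement of Lemma \ref{lem:q-th differential of mathcalG}.(1). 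You instead induct on $q$ through the recurrence $Z_q(\veck)=D[Z_{q-1}(\veck)]-Z_{q-1}(\veck')$ of Proposition \ref{prop:recurrence relation of Mq}, which replaces Lemma \ref{lem:summation formula for Pochhammer symbols} by the one-step telescoping $(k_r)_{l_r}-(k_r-1)_{l_r}=l_r\,(k_r)_{l_r-1}$ and replaces the full Lemma \ref{lem:q-th differential of mathcalG}.(1) by the one-step coefficient recursion $a_{\vecl,\vecm}^r(q)=(l_r+1)\,a_{\vecl^{(-1)},\vecm^{(-1)}}^r(q-1)$, which indeed falls out of a single application of $v_r^{-1}\partial/\partial u_r$ to the level-$(q-1)$ expansion in \eqref{eqn:coeff. of q-th differential of mathcalG}. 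Your steps all check: the base case is \eqref{eqn:definition of M0} with $a_{\vecl,\vecm}^r(0)=a_{\vecl,\vecm}^r$; the identity $D[\Li_{\veck+\vecm}(t)]=\Li_{\veck+\vecm^{(1)}}(t)$ holds for all integer indices directly from the series for $t\in(0,1)$ (the case distinction in the paper's differential equation only concerns rewriting in depth $r-1$, so your remark about the last index being $1$ is harmless but not even needed); the $l_r=0$ terms vanish before the shift $\vecl\mapsto\vecl^{(-1)}$, so that reindexing is clean; and applying the hypothesis to $\veck'$ as well as $\veck$ is legitimate since it holds for all $\veck\in\Z^r$. What your route buys is economy — Lemma \ref{lem:summation formula for Pochhammer symbols} becomes unnecessary and only the one-step instance of Lemma \ref{lem:q-th differential of mathcalG}.(1) is used — at the cost of invoking Proposition \ref{prop:recurrence relation of Mq}; the paper's route gives the coefficient in one pass from the definition with no induction on $q$. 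The underlying combinatorics is shared: both the paper's Lemma \ref{lem:summation formula for Pochhammer symbols} and Proposition \ref{prop:recurrence relation of Mq} rest on Lemma \ref{lem:recurrence formula of the sum of binomial coeff.}, so the two proofs repackage the same binomial bookkeeping.
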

\begin{proof}
By Definition \ref{def:definition of Mq}, we have
\begin{align*}
Z_q(\veck)
&=\sum_{i+j=q}(-1)^j\binom{q}{i}D^i\left[ Z_0(\veck^{(j)}) \right].
\intertext{By Proposition \ref{prop:deszeta can be written as sum of shuffle-mzfs} and by $\zeta^\shuffle_r(\veck;t)=\Li_{\veck}(t)$, we calculate}
&=\sum_{i+j=q}(-1)^j\binom{q}{i}D^i\left[
\sum_{\substack{\vecl=(l_j)\in\N_0^r \\ \vecm=(m_j)\in\Z^r \\ |\vecm|=0}}
a_{\vecl,\vecm}^r
\left(
\prod_{a=1}^{r-1}(k_a)_{l_a}
\right) (k_r-j)_{l_r}
\Li_{\veck+\vecm^{(j)}}(t)
\right] \\
&=\sum_{i+j=q}(-1)^j\binom{q}{i}
\sum_{\substack{\vecl=(l_j)\in\N_0^r \\ \vecm=(m_j)\in\Z^r \\ |\vecm|=0}}
a_{\vecl,\vecm}^r
\left(
\prod_{a=1}^{r-1}(k_a)_{l_a}
\right) (k_r-j)_{l_r}
\Li_{\veck+\vecm^{(q)}}(t).
\intertext{By replacing $l_r-q$ to $l_r$ and $m_r-q$ to $m_r$, we get}
&=\sum_{i+j=q}(-1)^j\binom{q}{i}
\sum_{\substack{\vecl=(l_j)\in\N_0^r \\ \vecm=(m_j)\in\Z^r \\ |\vecm|=-q}}
a_{\vecl^{(-q)},\vecm^{(-q)}}^r
\left(
\prod_{a=1}^{r-1}(k_a)_{l_a}
\right) (k_r-j)_{l_r+q}
\Li_{\veck+\vecm}(t) \\
&=\sum_{\substack{\vecl=(l_j)\in\N_0^r \\ \vecm=(m_j)\in\Z^r \\ |\vecm|=-q}}
a_{\vecl^{(-q)},\vecm^{(-q)}}^r
\left(
\prod_{a=1}^{r-1}(k_a)_{l_a}
\right) \left\{
\sum_{i+j=q}(-1)^j\binom{q}{i}(k_r-j)_{l_r+q}
\right\}
\Li_{\veck+\vecm}(t).
\intertext{By using the following Lemma \ref{lem:summation formula for Pochhammer symbols} for $l=l_r$ and $s=k_r$, we have}
&=\sum_{\substack{\vecl=(l_j)\in\N_0^r \\ \vecm=(m_j)\in\Z^r \\ |\vecm|=-q}}
a_{\vecl^{(-q)},\vecm^{(-q)}}^r
\left(
\prod_{a=1}^{r-1}(k_a)_{l_a}
\right) (l_r+1)_q\cdot(k_r)_{l_r}
\Li_{\veck+\vecm}(t) \\
&=\sum_{\substack{\vecl=(l_j)\in\N_0^r \\ \vecm=(m_j)\in\Z^r \\ |\vecm|=-q}}
\left\{ (l_r+1)_q\cdot a_{\vecl^{(-q)},\vecm^{(-q)}}^r \right\}
\left(
\prod_{a=1}^{r}(k_a)_{l_a}
\right) 
\Li_{\veck+\vecm}(t).
\end{align*}
By Lemma \ref{lem:q-th differential of mathcalG}.(1), we obtain \eqref{eqn:explicit expressions of barMq}, hence we finish the proof.
\end{proof}
The following lemma is used in the above proof.
\begin{lem}\label{lem:summation formula for Pochhammer symbols}
For $l,q\geq0$, and $s\in\C$, we have
\begin{equation}\label{eqn:summation formula for Pochhammer symbols}
\sum_{i+j=q}(-1)^j\binom{q}{i}(s-j)_{l+q}=(l+1)_q\cdot(s)_l.
\end{equation}
\end{lem}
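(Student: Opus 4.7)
The plan is to recognize the left-hand side of \eqref{eqn:summation formula for Pochhammer symbols} as a $q$-th iterated forward difference of the polynomial $f(s) := (s)_{l+q}$, and then evaluate that difference in closed form. Let $\Delta$ denote the forward difference operator $(\Delta g)(s) := g(s+1) - g(s)$. Via the index change $j = q-k$, the alternating sum on the left-hand side becomes
$$
\sum_{j=0}^{q}(-1)^j\binom{q}{j}f(s-j)
= \sum_{k=0}^{q}(-1)^{q-k}\binom{q}{k}f(s-q+k)
= (\Delta^q f)(s-q).
$$

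Next, I would establish the elementary identity
$$
\Delta(x)_n = (x+1)_n - (x)_n = (x+1)(x+2)\cdots(x+n-1)\bigl[(x+n) - x\bigr] = n\,(x+1)_{n-1},
$$
which follows by pulling out the common factor $(x+1)_{n-1}$. Iterating this $q$ times with $n = l+q, l+q-1, \dots, l+1$ produces
$$
\Delta^q (s)_{l+q} = (l+q)(l+q-1)\cdots(l+1)\,(s+q)_l = (l+1)_q\,(s+q)_l,
$$
so that $(\Delta^q f)(s-q) = (l+1)_q\,(s)_l$, matching the right-hand side of \eqref{eqn:summation formula for Pochhammer symbols}.

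The only step requiring any care is the bookkeeping in passing from the alternating binomial sum to the canonical finite-difference form $(\Delta^q f)(s-q)$: one must line up the sign conventions and the $q$-shift correctly. Beyond that, the argument is a direct application of the standard fact that a forward difference of a Pochhammer symbol drops its length by one and produces a multiplicative constant, so there is no genuine obstacle. An alternative, should one wish to avoid finite differences, is induction on $q$: assuming the identity for $q-1$, split the binomial coefficient via $\binom{q}{j} = \binom{q-1}{j} + \binom{q-1}{j-1}$ and use the factorization $(s-j)_{l+q} = (s-j)(s-j+1)_{l+q-1}$; however, the finite-difference route is the most transparent.
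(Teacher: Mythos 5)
Your proof is correct. The index change $j=q-k$ correctly identifies the sum $\sum_{j=0}^{q}(-1)^j\binom{q}{j}(s-j)_{l+q}$ with $(\Delta^q f)(s-q)$ for $f(s)=(s)_{l+q}$, the one-step identity $(x+1)_n-(x)_n=n\,(x+1)_{n-1}$ is right, and iterating it gives $\Delta^q(s)_{l+q}=(l+1)_q\,(s+q)_l$, hence $(l+1)_q\,(s)_l$ after the shift back by $q$. This is a genuinely different packaging from the paper: there the lemma is proved by induction on $q$, splitting the binomial coefficient via the recurrence $\binom{q}{i}=\binom{q-1}{i-1}+\binom{q-1}{i}$ (the paper's Lemma \ref{lem:recurrence formula of the sum of binomial coeff.}, already used elsewhere for $Z_q$), applying the induction hypothesis to the two resulting sums, and finishing with the explicit $q=1$ computation $(s)_{l+1}-(s-1)_{l+1}=(l+1)(s)_l$. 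The two arguments rest on the same elementary one-step identity (your $\Delta(x)_n=n(x+1)_{n-1}$ is exactly the paper's $q=1$ case up to a shift), but your finite-difference formulation compresses the inductive bookkeeping into the standard closed form for $\Delta^q$ and makes the degree-dropping mechanism transparent, whereas the paper's route has the merit of reusing a lemma it has already established and of staying entirely within binomial-coefficient manipulations; your suggested inductive alternative is essentially the paper's proof.
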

\begin{proof}
We prove this by induction on $q\geq0$.
It is clear that the claim holds for $q=0$.
When $q=1$, we calculate the left--and side of \eqref{eqn:summation formula for Pochhammer symbols} as follows.
\begin{align}\label{eqn:summation formula for Pochhammer symbols in q=1}
(s)_{l+1}-(s-1)_{l+1}
=(s)_{l}\cdot(s+l)-(s)_{l}\cdot(s-1)
=(l+1)\cdot(s)_l.
\end{align}
Assume the equation \eqref{eqn:summation formula for Pochhammer symbols} holds for $q-1\,(\geq0)$ or less.
Then, by using Lemma \ref{lem:recurrence formula of the sum of binomial coeff.} for $f(i,j)=(s-j)_{l+q}$, we have
\begin{align*}
&\sum_{i+j=q}(-1)^j\binom{q}{i}(s-j)_{l+q} \\
&=\sum_{i+j=q-1}(-1)^{j}\binom{q-1}{i}(s-j)_{l+q}
+\sum_{i+j=q-1}(-1)^{j+1}\binom{q-1}{i}(s-j-1)_{l+q}.
\intertext{By the induction hypothesis and by using \eqref{eqn:summation formula for Pochhammer symbols in q=1}, we get}
&=(l+2)_{q-1}\cdot(s)_{l+1}-(l+2)_{q-1}\cdot(s-1)_{l+1}
=(l+2)_{q-1}(l+1)\cdot(s)_l
=(l+1)_{q}\cdot(s)_l.
\end{align*}
Hence, we finish the proof.
\end{proof}



\begin{thebibliography}{99}
%
%
\bibitem{AET} Akiyama, S., Egami, S., and Tanigawa, Y., 
Analytic continuation of multiple zeta-functions and their values at non-positive integers, \textit{Acta Arith.}, {\bf 98} (2001), no. 2, 107--116. 

\bibitem{AK} Arakawa, T. and Kaneko, M.,
Multiple zeta values, poly-Bernoulli numbers, and related zeta functions,
\textit{Nagoya Math J.}, {\bf 153} (1999), 189--209.

%


\bibitem{CK} Connes, A., and Kreimer, D., 
Renormalization in quantum field theory and the Riemann-Hilbert problem. I. The Hopf algebra structure of graphs and the main theorem, 2000, \textit{Comm. Math. Phys.}, {\bf 210} (1) 249--273.


\bibitem{EMS1} Ebrahimi-Fard, K., Manchon, D., and Singer, J., 
The Hopf algebra of ($q$)multiple polylogarithms with non-positive arguments, \textit{Int. Math. Res. Notices}, 2017, Vol. {\bf 16}, 4882--4922.


\bibitem{EMT} Essouabri D., Matsumoto K., and Tsumura H.,
Multiple zeta-functions associated with linear recurrence sequences and the vectorial sum formula,
\textit{Canad. J. Math}, Vol {\bf 63} (2), 2011, 241--276. 



\bibitem{FKMT1} Furusho, H., Komori, Y., Matsumoto, K., and Tsumura, H., 
Desingularization of complex multiple zeta-functions, \textit{Amer. J. Math.}, {\bf 139} (2017), 147--173.

\bibitem{FKMT2} Furusho, H., Komori, Y., Matsumoto, K., and Tsumura, H., 
Desingularization of multiple zeta-functions of generalized Hurwitz-Lerch type and evaluation of $p$-adic multiple $L$-functions at arbitrary integers, \textit{RIMS Kokyuroku bessatsu} B68 (2017), 27--66.




\bibitem{Komi} Komiyama, N., 
An equivalence between desingularized and renormalized values of multiple zeta functions at negative integers, Int. Math. Res. Not., no. 2, 551--577, 2019.

\bibitem{Komi2} Komiyama, N.,
Shuffle-type product formulae of desingularized values of multiple zeta-functions, to appear in RIMS Kokyuroku bessatsu.

\bibitem{Komi3} Komiyama, N.,
On shuffle-type functional relations of desingularized multiple zeta-functions,
to appear in J. Number Theory.



		
\bibitem{Man}
Manchon, D., {\it Hopf algebras in renormalization}, 2008, Handbook of algebra, Vol.{\bf 5}, 365--427.
		
		


		
		
\bibitem{Zhao} Zhao, J.,
Analytic continuation of multiple zeta functions, \textit{Proc. Amer. Math. Soc.},
 {\bf 128}  (2000),  no. 5, 1275--1283. 
		
\bibitem{Zhao2} Zhao, J.,
Multiple zeta functions, multiple polylogarithms and their special values, \textit{Series on Number Theory and its Applications}, {\bf 12}. World Scientific Publishing Co. Pte. Ltd., Hackensack, NJ, 2016.
\end{thebibliography}
\end{document}